\documentclass[12pt]{article}
\usepackage[utf8]{inputenc}
\usepackage[english]{babel}
\usepackage{amsthm}
\usepackage{amsmath, amssymb, mathrsfs}
\usepackage{hyperref}
\usepackage{cleveref}
\usepackage{xypic}
\usepackage{graphicx}
\usepackage{amsfonts}
\bibliographystyle{amsplain}
\usepackage{aliascnt}
\newtheorem{theorem}{Theorem}[section]
\newtheorem{corollary}[theorem]{Corollary}
\newtheorem{lemma}[theorem]{Lemma}
\theoremstyle{definition}
\newtheorem{definition}[theorem]{Definition}

\theoremstyle{definition}
\newtheorem*{definition*}{Notational Conventions}

\theoremstyle{remark}
\newtheorem{remark}[theorem]{Remark}

\theoremstyle{definition}
\newtheorem{example}[theorem]{Example}

\theoremstyle{definition}

\theoremstyle{definition}

\title{Descent Theory and Mapping Spaces}
\author{Nicholas J. Meadows}  
\date{\today}  
\begin{document}

\maketitle

\section{Introduction}
\label{intro}

A $2$-category is a generalization of an ordinary category in which there are morphisms between morphisms. More generally, a strict $\infty$-category is an object in which there are notions of $n$-morphisms between $(n-1)$-morphisms for all integers $n$. In applications, we often consider (weak) $(\infty, n)$-categories, in which associativity and identity hold only up to isomorphism at the next level, and the morphisms below level $(n+1)$ are invertible. This is because $\infty$-categories can be studied using the techniques of homotopy theory. For instance, Kan complexes (i.e. spaces) are models for $\infty$-groupoids (higher categories where all morphisms are invertible up to homotopy). 

Traditionally, a stack is defined to be a sheaf of groupoids on a small Grothendieck site $\mathscr{C}$ in which we can glue objects together along compatible families of isomorphisms. More precisely, a stack is a sheaf of groupoids that satisfies the effective descent condition (see \cite{Giraud}). A higher stack is a presheaf of $(\infty, n)$-categories that satisfies glueing conditions involving higher morphisms.

A systematic theory of higher stacks with values in $\infty$-groupoids was developed by Jardine, which was useful in algebraic K-theory and cohomology theory (see \cite{Simpson-Descent}). Lurie developed the theory of $(\infty, 0)$-stacks (i.e. stacks valued in $\infty$-groupoids) in \cite{Lurie} internally to a quasi-category. Lurie's approach has the advantage of being more conceptual and general, whereas Jardine's approach is closer to the classical geometric language. 

Generalizing Jardine's work in a different direction, Hirschowitz and Simpson developed a theory of $(\infty, n)$-stacks (stacks with values in $(\infty, n)$-categories) for all $n \in \mathbb{N}$ in \cite{Simpson}.  They used the iterated Segal construction to construct the homotopy theory of $(\infty, n)$-categories.
  The main example of higher stacks that they produce are $(\infty, 1)$-stacks of the form $$L(M),$$ where $M$ is a presheaf of model categories,  and $L$ is a simplicial localization functor (i.e. a functor that replaces a category with weak equivalences with a higher category), applied sectionwise. Hirschowitz and Simpson give a sufficient condition for $LM$ to be a higher stack (\cite[Theorem 19.4]{Simpson}). Intuitively, this can be interpreted as a statement about how objects glue together along weak equivalences, since the object $LM$ acts as a higher categorical approximation to $M$.

The purpose of this paper is to give an account of the theory of $(\infty, 1)$-stacks in the sense of \cite{Simpson}, using quasi-categories and the local Joyal model structure of \cite{Nick}. The former is advantageous since many quasi-categorical constructions are more tractable than their Segal category analogues. The latter allows us to simplify the sheafification part of the theory by exploiting the technique of Boolean localization, which was used extensively in \cite{Nick} and \cite{Nick2}. This is one of the key contributions of this paper. 

The main idea of the paper is to characterize descent for the local model structure in terms of mapping space presheaves and an effective descent condition. For sites satisfying certain mild hypotheses, it suffices to show effective descent for covers generated by a single morphism $\phi : V \rightarrow U$. Effective descent for a presheaf of quasi-categories $X$ along such a cover reduces to showing the essential surjectivity of
 $$X(U) \rightarrow \mathrm{holim}_{n \in \Delta^{*} } X(\tilde{C}(\phi)_{n}),$$
where the homotopy limit is taken along the Cech nerve $\tilde{C}(\phi)$. These facts are combined to yield Theorem~\ref{thm4.12}, the main result of this paper. 

An important application of this theorem is to descent for left Quillen presheaves (Theorem~\ref{thm5.8}). Essentially, one replaces the condition on $\mathrm{holim}$ in Theorem~\ref{thm4.12} with a condition of essential surjectivity from the downstairs value, to the homotopy category of lax sections of the Quillen presheaf over the covering. This is an important statement when one wants to calculate anything. It is applied in the final section of the paper to prove a descent property for unbounded complexes. This is a strengthening of \cite[Section 20]{Simpson}, which showed descent for non-negatively graded complexes.  

This paper is organized as follows. In Section~\ref{sec3}, we explain the background on the Joyal model structure, and various quasi-categorical results needed for the rest of this paper.  The most important results are a description of the homotopy coherent nerve functor $\mathfrak{B}$ and its homotopy-theoretic significance. 

Section~\ref{sec4} reviews both the Jardine model structure on simplicial presheaves and the local Joyal model structure of \cite{Nick}. These are used to define a higher stack condition (quasi-injective descent), which is then related to the usual notion of hyperdescent (see Remark~\ref{rmk2.17} for a discussion).

The fifth section of this paper is devoted to describing the effective descent condition for a presheaf of quasi-categories (Definition~\ref{def3.1}). A sequence of lemmas relates effective descent for a covering with a $\mathrm{holim}$ taken over the covering. In the case of a cover $R$ generated by a single element, the finality of an inclusion $\Delta \subseteq R$ allows us to conclude the precise form of the condition used in Theorem~\ref{thm4.12}.

In the sixth section we prove the main results on descent for presheaves of quasi-categories (Theorem~\ref{thm4.10}, Theorem~\ref{thm4.12}). These are analogues of \cite[Theorems 10.2 and 19.4]{Simpson}. An interesting intermediate result is Theorem~\ref{thm4.9}, which says that we can check effective descent on the underlying presheaf of Kan complexes $J(X)$. Intuitively, this is because descent data involve specifying
equivalences on the higher intersections of open sets in the covering.
 
In Section~\ref{sec7}, we treat the case of presheaves of model categories. A result of Bergner allows one to compare the $\mathrm{holim}$ with the homotopy category of lax sections (see Theorem~\ref{thm5.5}). This leads to the application to left Quillen presheaves in Theorem~\ref{thm5.8}.

In the final section, we use our results to show that unbounded complexes on a reasonable Grothendieck site form a higher stack (Corollary~\ref{cor6.12}; see also Theorem~\ref{thm6.11}). 

\section{Notational and Terminological Conventions}\label{notation}
Given a category $C$, we write $BC$ for its nerve. We write $\mathrm{Iso}(C)$ for the subcategory of $C$ whose morphisms are isomorphisms of $C$. Given a category $C$ and $b \in \mathrm{Ob}(C)$, we write $C/b$ for the usual \textbf{slice category} over $b$. Let $\mathrm{sSet}, \mathrm{sCat} $ denote the categories of simplicial sets and simplicial categories, respectively. Let $\mathrm{Cat}$ denote the category of small categories.

Given two simplicial sets $X, Y$, write $X^{Y}$ for the internal hom in simplicial sets.
We call a map of simplicial sets which has the right lifting property with respect to the horns $\Lambda_{i}^{n} \subseteq \Delta^{n}, 0 < i \le n$ a \textbf{right fibration}. A map which has the left lifting property with respect to right fibrations is called a \textbf{right anodyne map}. A map which has the right lifting property with respect to the horns $\Lambda_{i}^{n} \subseteq \Delta^{n}, 0 < i < n$ is called an \textbf{inner fibration}. A map which has the left lifting property with respect to inner horns is called \textbf{inner anodyne.}

The \textbf{path category functor} $P : \mathrm{sSet} \rightarrow \mathrm{Cat}$ is the left adjoint to the nerve functor.
We write $\pi(X)$ for the \textbf{fundamental groupoid} of a quasi-category $X$, which is defined to be the groupoid completion of $P(X)$.

We call a simplicially enriched category a \textbf{simplicial category}. Given a simplicial category $\textbf{M}$, and objects $x, y$ of $\textbf{M}$, we write $\textbf{hom}_{\textbf{M}}(x, y)$ for the simplicial set of morphisms between $x$ and $y$. Given a simplicial category $\textbf{M}$, we write $\pi_{0}(\textbf{M})$ for its \textbf{fundamental category}. That is, the category whose objects are objects of $\textbf{M}$, such that for each $x, y \in C$, $\textbf{hom}_{\pi_{0}(\textbf{M})}(x, y)  = \pi_{0}\textbf{hom}_{\textbf{M}}(x, y)$. A \textbf{fibrant simplicial category} is a simplicial category which is fibrant in the Bergner model structure; i.e., all of its simplicial homs are Kan complexes. The Bergner model structure and the constructions for simplicial categories above can be found in \cite{BERGNER1}. 

A simplicial category can be viewed as a simplicial object in the category $\mathrm{Cat}$ of small categories, discrete on objects. Given a simplicial category $\textbf{M}$, we will write $\textbf{M}_{n}$ for $\textbf{M}$ in simplicial degree $n$.

\section{Preliminaries on the Joyal model structure}\label{sec3}
The purpose of this section is to review the Joyal model structure on simplicial sets. One particularly important feature is the characterization of weak equivalences in Theorem~\ref{thm1.6}. Another major point is the Quillen equivalence between the Joyal and Bergner model structures. The right adjoint, the homotopy coherent nerve, gives an important example of a simplicial localization functor. We will use this in Section~\ref{sec7}. 
\\

The \textbf{Joyal model structure} on simplicial sets, whose existence is
 asserted in \cite[Theorem 2.2.5.1]{Lurie} and
 \cite[Theorem 6.12]{Joyal-quasi-cat}, is one of the main models for the homotopy theory of $(\infty, 1)$-categories. The fibrant objects of this
 model structure are the \textbf{quasi-categories}: simplicial sets $X$ such that the map $X \rightarrow *$ is an inner fibration. 
The cofibrations are the monomorphisms. 
The weak equivalences are called \textbf{Joyal equivalences} and can be described as follows. Given two simplicial sets $K$ and $X$, $\tau_{0}(K, X)$ 
will denote \textbf{Joyal's set}, which is defined to be the set of isomorphism classes of objects in $P(X^{K})$. 
The Joyal equivalences are defined to be maps
 $f: A \rightarrow B$ such that, for each quasi-category $X$, the map 
\begin{equation*}
\tau_{0}(B, X) \rightarrow \tau_{0}(A, X)
\end{equation*}
is a bijection. The fibrations of this model structure are called
 \textbf{quasi-fibrations}. The trivial fibrations are the trivial Kan
 fibrations.

 For a quasi-category $X$, let $J(X)$ denotes its maximal Kan subcomplex. Write $I = B\pi( \Delta^{1})$. 
The following is taken from \cite[Section 1]{Joyal1}:

\begin{theorem}\label{thm1.0}
A quasi-category $X$ is a Kan complex iff $P(X)$ is a groupoid. Thus, $J(X)$ can be constructed by taking the maximal subcomplex of $X$ whose 1-simplices are invertible in $P(X)$. Furthermore, a 1-simplex $s: \Delta^{1} \rightarrow X$ is invertible in $P(X)$ iff it extends to a map $I \rightarrow X$. 
\end{theorem}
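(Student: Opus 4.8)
The plan is to establish the three assertions of Theorem~\ref{thm1.0} in sequence, leaning on the universal property of $P$ as left adjoint to the nerve and on the standard combinatorics of quasi-categories.

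\textbf{Step 1: $X$ is a Kan complex iff $P(X)$ is a groupoid.} For the forward direction, if $X$ is a Kan complex then every horn $\Lambda^2_0 \to X$ and $\Lambda^2_2 \to X$ fills, which is precisely what is needed to produce, for any $1$-simplex $f$, a left inverse and a right inverse in $P(X)$; a standard argument then shows these agree, so $P(X)$ is a groupoid. For the converse, suppose $P(X)$ is a groupoid and let $\Lambda^n_i \to X$ be an outer horn, say $i = 0$ (the case $i = n$ is dual). Since $X$ is a quasi-category, all inner horns fill, so the only obstruction is at the outer faces; one uses that the edge $\Delta^{\{0,1\}} \to X$ appearing in the horn becomes invertible in $P(X)$ to run the usual argument (e.g.\ the one in \cite[Proposition 1.2.5.1]{Lurie} or Joyal's original) that converts the outer horn problem into an inner one. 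This is the step I expect to be the most delicate, since it requires the precise combinatorial manipulation of horns; but it is classical and I would cite \cite{Joyal1} for the details rather than reproduce them.

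\textbf{Step 2: $J(X)$ is the maximal subcomplex whose $1$-simplices are invertible in $P(X)$.} Let $X'$ denote the maximal simplicial subset of $X$ all of whose edges are invertible in $P(X)$ (concretely, an $n$-simplex lies in $X'$ iff each of its edges is invertible). First, $X'$ is a quasi-category: inner horns in $X'$ fill in $X$, and one checks the filler again has all edges invertible, using that a composite of invertibles is invertible and that an edge of the new simplex is a composite of edges already in $X'$. Now apply Step 1: $P(X')$ is a groupoid because every edge of $X'$ is invertible in $P(X')$ (invertibility in $P(X)$ restricts, since a two-sided inverse two-simplex for an edge of $X'$ again has invertible edges and hence lies in $X'$). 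So $X'$ is a Kan complex, hence $X' \subseteq J(X)$. Conversely $J(X) \subseteq X'$ since every edge of a Kan complex is invertible in its path category (again by Step 1 applied to $J(X)$, whose edges map to invertibles of $P(X)$ by functoriality of $P$ on the inclusion $J(X) \hookrightarrow X$). Hence $J(X) = X'$.

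\textbf{Step 3: $s : \Delta^1 \to X$ is invertible in $P(X)$ iff it extends along $\Delta^1 \hookrightarrow I$.} Since $I = B\pi(\Delta^1)$ and $B$ is fully faithful, $\mathrm{Hom}(I, X) \cong \mathrm{Hom}_{\mathrm{Cat}}(\pi(\Delta^1), P(X))$ by the $P \dashv B$ adjunction; and $\pi(\Delta^1)$ is the free groupoid on one arrow, i.e.\ the localization of $P(\Delta^1) = [1]$ at its nontrivial arrow. Thus functors $\pi(\Delta^1) \to P(X)$ correspond exactly to arrows of $P(X)$ that become invertible, i.e.\ to invertible arrows of $P(X)$; chasing the unit $\Delta^1 \to I$ through this identification shows the extension exists iff $P(s)$ is invertible. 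The only point to verify is that the map $P(\Delta^1) \to P(I)$ induced by $\Delta^1 \hookrightarrow I$ is the localization functor $[1] \to \pi[1]$, which follows from $P$ preserving colimits and the explicit construction of $I$ as a pushout of copies of $\Delta^1$ and $\Delta^3$ (or whichever model of the interval is in use), together with $P$ sending that pushout to the corresponding presentation of the free-living isomorphism.

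Overall the argument is a bookkeeping exercise assembling well-known facts; the genuine content is in Step 1, and there I would defer to \cite{Joyal1} for the horn-filling lemma rather than grinding through it.
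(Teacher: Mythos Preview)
The paper does not prove this theorem at all: it simply records it as ``taken from \cite[Section 1]{Joyal1}'' and moves on. So you are already doing more than the paper, and your Steps 1 and 2 are fine as sketches (and your instinct to defer the hard horn-filling in Step 1 to Joyal is exactly what the paper itself does globally).

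Step 3, however, contains a real error. You claim $\mathrm{Hom}_{\mathrm{sSet}}(I,X)\cong \mathrm{Hom}_{\mathrm{Cat}}(\pi(\Delta^1),P(X))$ ``by the $P\dashv B$ adjunction''. The adjunction runs the other way: it gives $\mathrm{Hom}_{\mathrm{sSet}}(S,BC)\cong \mathrm{Hom}_{\mathrm{Cat}}(P(S),C)$, not $\mathrm{Hom}_{\mathrm{sSet}}(BC,X)\cong \mathrm{Hom}_{\mathrm{Cat}}(C,P(X))$. Indeed the bijection you assert is false for a general quasi-category $X$: the nerve $I=B\pi(\Delta^1)$ has infinitely many nondegenerate simplices, and there are typically many maps $I\to X$ lying over a single isomorphism in $P(X)$. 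What survives of your argument is only the easy direction: if $s$ extends to $\tilde s:I\to X$, then $P(\tilde s):P(I)\cong\pi(\Delta^1)\to P(X)$ exhibits $[s]$ as invertible.

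For the hard direction you need the quasi-category structure of $X$, not just $P(X)$. The cleanest route, given what you have already established, is to use Steps 1 and 2: if $[s]$ is invertible in $P(X)$, then by Step 2 the edge $s$ lies in $J(X)$; since $J(X)$ is a Kan complex (Step 1) and the inclusion $\Delta^1\hookrightarrow I$ is a monomorphism and a weak homotopy equivalence, hence anodyne, the map $s:\Delta^1\to J(X)$ extends over $I$, and composing with $J(X)\hookrightarrow X$ gives the desired extension. Alternatively one invokes directly that $\Delta^1\hookrightarrow I$ is a trivial cofibration in the Joyal model structure, but that is essentially the content of the theorem and is what Joyal proves.
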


\begin{definition}\label{def1.1}
Given two simplicial sets $S$ and $T$, their \textbf{join}, denoted $S* T$, is a simplicial set whose $n$-simplices are described by the formula
$$
(S * T)_{n} = S_{n} \cup T_{n} \bigcup_{i + j = n-1} (S_{i} \times T_{j}).
$$ 
The ith degeneracy $d_{i} : (S * T)_{n} \rightarrow (S * T)_{n-1}$ is defined on the factors $S_{n}$ and $T_{n}$ using the degeneracy maps of $S_{n}$ and $T_{n}$. 
For $(\sigma, \sigma') \in S_{i} \times T_{j}$, we have the formula 

\[ d_{k}(\sigma, \sigma') = \left\{
  \begin{array}{lcr}
(d_{k}\sigma, \sigma') & \text{if} & k \le i, i \neq 0 \\
 (\sigma, d_{k-i-1}(\sigma')) & \text{if} & k > i, j \neq 0 
\end{array}
\right.
\]
\end{definition}

\begin{definition}\label{def1.2}
Given a map of simplicial sets $p : K \rightarrow S$, there is a simplicial set $S_{/p}$ such that 

$$(S_{/p})_{n} = \mathrm{hom}_{p}(\Delta^{n} * K, S),$$
where $\mathrm{hom}_{p}$ means simplicial set maps $\phi$ such that $\phi|_{K} = p$. We call this the \textbf{slice} over $p$.  
\end{definition}
There is a natural map $S_{/p} \rightarrow S$ which is induced in simplicial degree $n$ by $\Delta^{n} \subseteq \Delta^{n} * K$. We call this the \textbf{projection map}. 

\begin{definition}\label{def1.3}
Given a simplicial set $X$ and $x, y \in X$, the \textbf{mapping space} between $x$ and $y$ is defined to be the pullback 
$$
\xymatrix
{
\mathrm{Map}_{X}(x, y) \ar[r] \ar[d] & \ar[d]^{q} X_{/y}\\
\, * \ar[r]_{x} \ar[r] & X
}
$$
where $q$ is the projection map. 
\end{definition}

In \cite{Lurie}, Lurie writes $\mathrm{Hom}_{X}^{R}(x, y)$ for these mapping spaces and calls them right mapping spaces. 
The mapping spaces in a quasi-category are Kan complexes (\cite[Proposition 1.2.2.3]{Lurie}).

\begin{definition}\label{def1.4}
Let $f: X \rightarrow Y$ be a map of quasi-categories. We say that $f$ is fully faithful iff 
for each $x, y \in X$, $\mathrm{Map}_{X}(x, y) \rightarrow \mathrm{Map}_{Y}(f(x), f(y))$ is a weak equivalence of Kan complexes. We say that $f$ is \textbf{essentially surjective} iff $P(f)$ is essentially surjective.  
\end{definition}

\begin{lemma}\label{lem1.5}
A morphism $f: X \rightarrow Y$ of quasi-categories is essentially surjective iff 
$\pi_{0}J(f)$ is surjective. 
\end{lemma}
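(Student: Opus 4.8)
The plan is to unwind the two characterizations of essential surjectivity against the description of $J(X)$ from Theorem~\ref{thm1.0}. By Definition~\ref{def1.4}, $f$ is essentially surjective iff $P(f) : P(X) \to P(Y)$ is essentially surjective as a functor of ordinary categories, i.e. every object of $P(Y)$ is isomorphic in $P(Y)$ to one in the image of $P(f)$. On the other hand, $\pi_0 J(f)$ surjective means every object of $Y$ (equivalently, every object of $J(Y)$, since $J(Y)$ has the same $0$-simplices as $Y$) lies in the same path component of $J(Y)$ as an object of the form $f(x)$. So the content is that ``isomorphic in $P(Y)$'' and ``connected by a zig-zag of invertible $1$-simplices, i.e. in the same component of $J(Y)$'' define the same equivalence relation on objects of $Y$.

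The key step is precisely that comparison of equivalence relations, and it rests on Theorem~\ref{thm1.0}. First I would observe that the $0$-simplices of $X$, $J(X)$, and $P(X)$ all coincide (the path category functor is the identity on objects, and $J$ is a subcomplex containing all vertices), so it suffices to compare the relations on this common vertex set. If $x_0, x_1 \in Y$ are connected by a $1$-simplex $s : \Delta^1 \to Y$ that lies in $J(Y)$, then by Theorem~\ref{thm1.0} $s$ is invertible in $P(Y)$, hence $x_0 \cong x_1$ in $P(Y)$; conversely, if $[s] : x_0 \to x_1$ is an isomorphism in $P(Y)$, then again by Theorem~\ref{thm1.0} a representative $1$-simplex extends to $I \to Y$ and in particular factors through $J(Y)$, exhibiting $x_0, x_1$ in the same component of $J(Y)$. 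Taking the reflexive–transitive closure on both sides (isomorphism in a category is already symmetric and transitive; path-components are by definition), the two equivalence relations on vertices agree.

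From here the lemma is immediate: $P(f)$ hits every isomorphism class in $P(Y)$ iff $J(f)$ hits every path component of $J(Y)$, which is exactly the statement that $\pi_0 J(f)$ is surjective. I would also note in passing that $\pi_0 J(Y)$ is naturally the set of isomorphism classes of $P(Y)$ under this identification, which makes the equivalence transparent.

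The main obstacle is essentially bookkeeping rather than depth: one must be careful that a ``zig-zag'' of $1$-simplices in $J(Y)$ really does reduce to isomorphisms in $P(Y)$ and back without losing invertibility — this is where Theorem~\ref{thm1.0} is doing the real work, translating the combinatorial condition ``$1$-simplex of $J$'' into the categorical condition ``invertible in $P$.'' One should also make sure the argument does not secretly need $X$ or $Y$ to be more than quasi-categories (it does not; only $Y$'s structure is used, through $P(Y)$ and $J(Y)$), and that essential surjectivity of $P(f)$ is genuinely a statement about the image up to iso, matching the target side cleanly.
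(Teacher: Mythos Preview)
Your proof is correct and follows essentially the same approach as the paper: both establish the natural identification of $\pi_{0}J(X)$ with the set of isomorphism classes of $P(X)$ using Theorem~\ref{thm1.0}. The paper packages this as a chain of natural bijections $\pi_{0}\mathrm{Iso}(P(X)) \cong \pi_{I}(*, X) \cong \pi_{\Delta^{1}}(*, J(X)) \cong \pi_{0}J(X)$, whereas you unwind the same content by directly comparing the two equivalence relations on vertices; the underlying argument is the same.
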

\begin{proof}
By Theorem~\ref{thm1.0}, 
a 1-simplex $s: \Delta^{1} \rightarrow X$ is invertible iff it extends to a map $B\pi \Delta^{1} \rightarrow X$. Thus, there are bijections (natural in $X$)
$$
\pi_{0}\mathrm{Iso}(P(X)) \cong \pi_{I}(*, X) \cong \pi_{\Delta^{1}}(*, J(X)) \cong \pi_{0}J(X),
$$
where $\pi_{I}$ denotes the homotopy classes of maps with respect to $I = B\pi \Delta^{1}$ and $\pi_{\Delta^{1}}$ those with respect to $\Delta^{1}$.  
\end{proof}

The mapping space construction is important because of the following result:

\begin{theorem}\label{thm1.6}
\textnormal{(}see \textnormal{\cite[Theorem 2.2.5.1]{Lurie}, \cite[Theorem 8.1]{MappingSpaces})}.
Suppose that $f: X \rightarrow Y$ is a map of quasi-categories. Then $f$ is a Joyal equivalence iff it is fully 
faithful and essentially surjective. 
\end{theorem}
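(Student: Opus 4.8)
The plan is to prove both directions by exploiting the mapping-space characterization already in play, and reducing wherever possible to the known fact that trivial fibrations detect the "fully faithful plus essentially surjective" condition.

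For the forward direction, suppose $f : X \to Y$ is a Joyal equivalence. First I would factor $f$ as a cofibration (monomorphism) followed by a trivial quasi-fibration $X \hookrightarrow Z \xrightarrow{p} Y$; by two-out-of-three $X \hookrightarrow Z$ is a trivial cofibration, and since $X, Y$ are fibrant (quasi-categories), this inclusion is a deformation retract, so it suffices to treat $p$ and the inclusion separately. For $p$ a trivial Kan fibration, essential surjectivity is immediate (it is surjective on $0$-simplices, hence $P(p)$ is essentially surjective), and full faithfulness follows because the induced map on slices $Z_{/y} \to Y_{/p(y)}$ is again a trivial fibration (slices of trivial fibrations along the projection are trivial fibrations, by a standard right-anodyne argument), so the pullback defining $\mathrm{Map}$ yields a trivial fibration on mapping spaces, in particular a weak equivalence. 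For the trivial cofibration $j : X \hookrightarrow Z$ which is a deformation retract via some $r : Z \to X$ with $rj = \mathrm{id}$ and a homotopy $jr \simeq \mathrm{id}$ through $I = B\pi\Delta^1$, I would observe that $\pi_0 J(-)$ and the mapping-space functors are homotopy invariant under such $I$-homotopies, giving essential surjectivity (via Lemma~\ref{lem1.5}) and full faithfulness of $j$. Composing, $f$ is fully faithful and essentially surjective.

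For the converse, suppose $f$ is fully faithful and essentially surjective. Again factor $f = p \circ j$ as above with $j : X \hookrightarrow Z$ a trivial cofibration and $p : Z \to Y$ a quasi-fibration. Since $j$ is already a Joyal equivalence, by the forward direction it is fully faithful and essentially surjective; then by two-out-of-three for these properties — essential surjectivity via $\pi_0 J$, which is a functor, and full faithfulness via the composite on mapping spaces together with the two-out-of-three property of weak equivalences of Kan complexes — the map $p$ is fully faithful and essentially surjective. It now suffices to show a quasi-fibration $p$ that is fully faithful and essentially surjective is a trivial fibration, i.e. has the right lifting property against all $\partial\Delta^n \hookrightarrow \Delta^n$. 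Essential surjectivity plus the lifting property of $p$ against $\Lambda^1_0 \to \Delta^1$ (using invertibility of the chosen equivalences, detected by extension to $I$ as in Theorem~\ref{thm1.0}) handles $n = 0$. For $n \geq 1$, I would set up the lifting problem as a lifting problem for the map on appropriate slices $Z_{/z} \to Y_{/p(z)} \times_{Y} Z$ or equivalently reduce, via the pullback square defining $\mathrm{Map}$, to showing this relative slice map is a trivial fibration; full faithfulness says exactly that the mapping-space comparison is a weak equivalence, and combined with $p$ being an inner fibration this forces the relevant lifting.

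The main obstacle is this last step: converting "fully faithful" (a statement about weak equivalences of mapping-space Kan complexes, which are homotopy-theoretic) into the strict right lifting property of $p$ against boundary inclusions. The clean way is to show that for an inner fibration $p$, full faithfulness is equivalent to the induced maps on slices $Z_{/z} \to Y_{/p(z)}$ being trivial fibrations over the relevant base, and then to run an induction on $n$ feeding essential surjectivity into the base case; I expect to lean on the cited results (\cite[Theorem 2.2.5.1]{Lurie}, \cite[Theorem 8.1]{MappingSpaces}) for the precise slice-fibration bookkeeping rather than reproving it, and to state the argument at the level of which lifting problems reduce to which, citing Joyal's/Lurie's treatment for the anodyne-map lemmas about joins and slices.
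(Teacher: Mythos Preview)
The paper does not supply a proof of this theorem: it is stated with attributions to \cite[Theorem 2.2.5.1]{Lurie} and \cite[Theorem 8.1]{MappingSpaces}, and no \texttt{proof} environment follows. So there is no in-paper argument to compare your proposal against.

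Your outline is essentially the standard argument one finds in those references: factor through a (trivial) fibration and reduce to showing that a quasi-fibration between quasi-categories which is fully faithful and essentially surjective is a trivial Kan fibration. The forward direction as you wrote it is fine. In the converse, you correctly identify the only real content as the last step, and your honesty that you would ``lean on the cited results'' for the slice/anodyne bookkeeping is appropriate here---but note that this makes the proposal a reduction to, rather than an independent proof of, the cited theorems. If you want a self-contained argument, the point to flesh out is this: for an inner fibration $p: Z \to Y$, full faithfulness forces the induced map $Z_{/z} \to Z \times_{Y} Y_{/p(z)}$ to be a trivial fibration (it is already a right fibration by the join/slice lifting calculus, and its fibers are the mapping-space comparison maps, which are weak equivalences by hypothesis; a right fibration with contractible fibers is a trivial fibration). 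Combined with the isofibration property of quasi-fibrations for the $n=0$ case, this gives the lifting against all $\partial\Delta^{n} \hookrightarrow \Delta^{n}$. That is the missing lemma you gestured at; once stated, the rest of your sketch goes through.
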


\begin{definition}\label{def1.7}
Suppose that $\Omega^{*}$ is a cosimplicial object in a category $C$. Then there is a pair of adjoint functors associated to $\Omega^{*}$
$$
 |\, \, |_{\Omega^{*}} : \mathrm{sSet} \rightleftarrows C : \mathrm{Sing}_{\Omega^{*}}.   
$$
The left adjoint is given by  
$$
|S|_{\Omega^{*}} = \underset{\Delta^{n} \rightarrow S}{\underset{\longrightarrow}{\mathrm{lim}}} \Omega^{n}
$$
and the right adjoint is given by $\mathrm{Sing}_{\Omega^{*}}(S)_{n} = \mathrm{hom}(\Omega^{n}, S)$. The right adjoint is known as the \textbf{singular functor associated to $\Omega^{*}$}.
\end{definition}

For each $n \in \mathbb{N}$ there is a simplicial category $\Phi^{n}$ such that:
\begin{enumerate}
\item{
The objects $\Phi^{n}$ are the objects in the set $\{ 0, 1, \ldots, n\}$.}
\item{ $\mathrm{hom}_{\Phi^{n}}(i, j)$ can be identified with the nerve of the poset $\mathcal{P}_{n}[i, j]$ of subsets of the interval $[i, j]$ which contains the endpoints. That is, $\mathrm{hom}_{\Phi^{n}}(i, j) \cong (\Delta^{1})^{i-j-1}$.} 
\item{Composition is induced by union of posets.}
\end{enumerate}

These $\Phi^{n}$ glue together to give a cosimplicial object $\Phi^{*}$. The singular functor associated to $\Phi^{*}$ is called the \textbf{homotopy coherent nerve}, and is denoted $\mathfrak{B}$. We write $\mathfrak{C}$ for its left adjoint.

  We call a map $f: \mathcal{C} \rightarrow \mathcal{D}$ of simplicial categories a \textbf{DK-equivalence} iff 
\begin{enumerate} \item{$\mathrm{hom}_{\mathcal{C}}(x, y) \rightarrow \mathrm{hom}_{\mathcal{D}}(f(x), f(y))$ are weak equivalences $\forall \, x, y \in \mathrm{Ob} (\mathcal{C})$.} 
\item{ $\pi_{0}(\mathcal{C}) \rightarrow \pi_{0}(\mathcal{D})$ is an equivalence of categories.}
\end{enumerate}

In \cite{BERGNER1}, Bergner produced a model structure on simplicial categories in which the weak equivalences are the DK-equivalences. The fibrant objects are the objects with fibrant simplicial hom's. 
The homotopy coherent nerve is significant because of the following theorem, which relates the Bergner model structure to the Joyal model structure:

\begin{theorem}\label{thm1.8}
There is a Quillen equivalence
$$
\mathfrak{C} : \mathrm{sSet} \rightleftarrows \mathrm{sCat} : \mathfrak{B}
$$
between the Joyal model structure and the Bergner model structure. 
\end{theorem}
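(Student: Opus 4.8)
The plan is to establish the two halves of the statement in turn: first that $(\mathfrak{C},\mathfrak{B})$ is a Quillen adjunction, and then that it is a Quillen equivalence. The adjunction itself is given by Definition~\ref{def1.7} applied to the cosimplicial object $\Phi^{*}$, so there is nothing to prove there. Throughout, the main tool for recognizing Joyal equivalences of quasi-categories will be Theorem~\ref{thm1.6} (fully faithful $+$ essentially surjective) together with Lemma~\ref{lem1.5}, while on the simplicial-category side everything is controlled by the explicit description of $\Phi^{n}=\mathfrak{C}[\Delta^{n}]$ recorded above.

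To get the Quillen adjunction it suffices to show $\mathfrak{C}$ is left Quillen, i.e. preserves cofibrations and trivial cofibrations. Since $\mathfrak{C}$ is a left adjoint it preserves colimits, so for cofibrations it is enough to check that $\mathfrak{C}$ carries each generating cofibration $\partial\Delta^{n}\hookrightarrow\Delta^{n}$ to a cofibration of simplicial categories; using the description of $\Phi^{n}$ one checks that $\mathfrak{C}[\partial\Delta^{n}]\to\Phi^{n}$ is the identity on objects and on all $\textbf{hom}(i,j)$ with $(i,j)\neq(0,n)$, and is a monomorphism of cubes on $\textbf{hom}(0,n)$, so that the whole map is built by attaching free cells and is a cofibration. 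For trivial cofibrations one reduces to a generating set, which (after Joyal) may be taken to consist of the inner horn inclusions $\Lambda^{n}_{i}\hookrightarrow\Delta^{n}$, $0<i<n$, together with the endpoint inclusion $\{0\}\hookrightarrow I$; for the latter a direct computation shows $\mathfrak{C}[I]$ is DK-equivalent to the terminal simplicial category, so $\mathfrak{C}[\{0\}]\to\mathfrak{C}[I]$ is a trivial cofibration, and for the former one invokes the \emph{rigidification lemma}: $\mathfrak{C}$ sends inner anodyne maps to DK-equivalences (\cite[Section 2.2]{Lurie}, \cite{MappingSpaces}). Combined with the preservation of cofibrations this makes $\mathfrak{C}$ left Quillen.

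For the Quillen equivalence, since every simplicial set is Joyal-cofibrant it suffices to verify: (a) for every simplicial set $S$ the derived unit $S\to\mathfrak{B}(\mathfrak{C}[S]^{\mathrm{fib}})$ (with $(-)^{\mathrm{fib}}$ a Bergner fibrant replacement) is a Joyal equivalence; and (b) $\mathfrak{B}$ reflects weak equivalences between fibrant simplicial categories. I would prove (a) first for $S=X$ a quasi-category: then $\mathfrak{B}(\mathfrak{C}[X]^{\mathrm{fib}})$ is again a quasi-category and the unit is a bijection on objects, hence essentially surjective by Lemma~\ref{lem1.5}, so by Theorem~\ref{thm1.6} it remains to check full faithfulness. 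Since $\mathfrak{C}[X]\to\mathfrak{C}[X]^{\mathrm{fib}}$ is a DK-equivalence and $\mathfrak{B}$ of a fibrant simplicial category $\mathcal{D}$ has $\mathrm{Map}_{\mathfrak{B}\mathcal{D}}(x,y)\simeq\textbf{hom}_{\mathcal{D}}(x,y)$, this reduces to showing the natural comparison $\mathrm{Map}_{X}(x,y)\to\textbf{hom}_{\mathfrak{C}[X]}(x,y)$ is a weak equivalence of Kan complexes. For general $S$, choose a Joyal trivial cofibration $S\hookrightarrow X$ into a quasi-category and compare derived units in the evident commuting square: $S\hookrightarrow X$ is a Joyal equivalence by choice, $X\to\mathfrak{B}(\mathfrak{C}[X]^{\mathrm{fib}})$ by the quasi-category case, and $\mathfrak{B}(\mathfrak{C}[S]^{\mathrm{fib}})\to\mathfrak{B}(\mathfrak{C}[X]^{\mathrm{fib}})$ because $\mathfrak{C}$, being left Quillen, carries $S\hookrightarrow X$ to a DK-equivalence of cofibrant objects (Ken Brown's lemma) and $\mathfrak{B}$ preserves weak equivalences between fibrant objects; two-out-of-three then gives (a). For (b), let $f:\mathcal{C}\to\mathcal{D}$ be a map of fibrant simplicial categories with $\mathfrak{B}f$ a Joyal equivalence; by Theorem~\ref{thm1.6} it is fully faithful and essentially surjective, and translating these through the identification $P(\mathfrak{B}\mathcal{C})\cong\pi_{0}(\mathcal{C})$ of homotopy categories and through the mapping-space comparison $\textbf{hom}_{\mathcal{C}}(x,y)\simeq\mathrm{Map}_{\mathfrak{B}\mathcal{C}}(x,y)$ shows that $\pi_{0}(\mathcal{C})\to\pi_{0}(\mathcal{D})$ is essentially surjective and that each $\textbf{hom}_{\mathcal{C}}(x,y)\to\textbf{hom}_{\mathcal{D}}(f(x),f(y))$ is a weak equivalence — exactly the conditions defining a DK-equivalence. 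Combining (a) and (b) yields the Quillen equivalence.

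The main obstacle is the mapping-space comparison used in (a) (appearing in disguise in the Quillen adjunction as the statement that $\mathfrak{C}$ sends inner anodyne maps to DK-equivalences): one must identify $\textbf{hom}_{\mathfrak{C}[X]}(x,y)$ — a priori only an unwieldy iterated colimit of cubes — up to natural weak equivalence with the quasi-categorical mapping space $\mathrm{Map}_{X}(x,y)$ (and, relatedly, must know the counit $\mathfrak{C}[\mathfrak{B}\mathcal{D}]\to\mathcal{D}$ is a DK-equivalence for fibrant $\mathcal{D}$). This is the rigidification result obtained via the ``necklace'' model of \cite{MappingSpaces} or by Lurie's direct analysis in \cite[Section 2.2]{Lurie}; once it is in hand, everything else is a formal manipulation with Theorem~\ref{thm1.6}, Ken Brown's lemma, and the two-out-of-three property.
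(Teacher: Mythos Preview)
The paper does not prove Theorem~\ref{thm1.8} at all: it is stated without proof as a background result, implicitly imported from \cite[Theorem 2.2.5.1]{Lurie} (and the companion mapping-space analysis in \cite{MappingSpaces}). So there is no ``paper's own proof'' to compare against; your proposal is a self-contained sketch of the standard argument from those references, and as such it is broadly correct and identifies the genuine content --- the rigidification/mapping-space comparison --- exactly where it lies.

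One point deserves care. You assert that the Joyal trivial cofibrations are generated by the inner horn inclusions together with $\{0\}\hookrightarrow I$. This is not true as stated: that class generates the \emph{inner anodyne $+$ isofibration} lifting problem, which characterizes quasi-fibrations only between quasi-categories, not arbitrary Joyal fibrations. The fix is standard --- either invoke the criterion that an adjunction is Quillen once the left adjoint preserves cofibrations and the right adjoint preserves fibrations between fibrant objects (this is how Lurie proceeds in \cite[2.2.5]{Lurie}), or argue via Ken Brown that it suffices for $\mathfrak{C}$ to send cofibrations to cofibrations and Joyal equivalences between cofibrant objects to DK-equivalences --- but you should not claim an explicit generating set for the Joyal trivial cofibrations. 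With that adjustment your outline matches the literature proof the paper is citing.
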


\begin{theorem}\label{thm1.9}

Suppose that $\textnormal{\textbf{M}}$ is a fibrant simplicial category. Then: 
\begin{enumerate}
\item{There is a natural isomorphism $\pi_{0}\mathfrak{C} \cong P$. In particularly, we have equivalences of categories $ \pi_{0}\textnormal{\textbf{M}} \simeq \pi_{0}\mathfrak{C}\mathfrak{B}(\textnormal{\textbf{M}}) \cong P\mathfrak{B}(\textnormal{\textbf{M}})$.}
\item{The Kan complexes
$
\mathrm{Map}_{\mathfrak{B}(\textbf{M})}(x, y)$ and $\textnormal{\textbf{hom}}_{\textbf{M}}(x, y) $ are connected by a natural zig-zag of weak equivalences. 
}
\end{enumerate}
\end{theorem}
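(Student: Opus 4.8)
The plan is to exploit the Quillen equivalence of \cref{thm1.8} together with general properties of the counit and unit of the adjunction $\mathfrak{C} \dashv \mathfrak{B}$. For part (1), the key observation is that $\pi_{0}$ and $P$ are both determined by the $0$- and $1$-simplices: $\mathfrak{C}(\Delta^{0}) = \Phi^{0}$ is a point and $\mathfrak{C}(\Delta^{1}) = \Phi^{1}$ has two objects with $\mathrm{hom}(0,1) = \Delta^{0}$, so applying $\pi_{0}$ levelwise to $\mathfrak{C}(\Delta^{n})$ recovers the poset $[n]$, i.e. $\pi_{0}\mathfrak{C}(\Delta^{n}) \cong P(\Delta^{n})$. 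Since both $\pi_{0}\mathfrak{C}$ and $P$ are colimit-preserving (being composites of left adjoints, resp. a left adjoint) and every simplicial set is a colimit of simplices, the isomorphism on $\Delta^{n}$ extends to a natural isomorphism $\pi_{0}\mathfrak{C} \cong P$ of functors $\mathrm{sSet} \to \mathrm{Cat}$. Applying this to $X = \mathfrak{B}(\textbf{M})$ gives $P\mathfrak{B}(\textbf{M}) \cong \pi_{0}\mathfrak{C}\mathfrak{B}(\textbf{M})$; and because $\textbf{M}$ is fibrant, the counit $\mathfrak{C}\mathfrak{B}(\textbf{M}) \to \textbf{M}$ is a DK-equivalence (it is the derived counit of a Quillen equivalence at a fibrant object), so applying $\pi_{0}$ and using condition (2) in the definition of DK-equivalence yields the equivalence of categories $\pi_{0}\mathfrak{C}\mathfrak{B}(\textbf{M}) \simeq \pi_{0}\textbf{M}$.

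For part (2), I would again use that the counit $\varepsilon : \mathfrak{C}\mathfrak{B}(\textbf{M}) \to \textbf{M}$ is a DK-equivalence, so by condition (1) in the definition it induces a weak equivalence
$$
\mathrm{hom}_{\mathfrak{C}\mathfrak{B}(\textbf{M})}(x,y) \xrightarrow{\ \sim\ } \textbf{hom}_{\textbf{M}}(x,y)
$$
for all objects $x,y$. It then remains to connect $\mathrm{hom}_{\mathfrak{C}\mathfrak{B}(\textbf{M})}(x,y)$ with the quasi-categorical mapping space $\mathrm{Map}_{\mathfrak{B}(\textbf{M})}(x,y)$ of \cref{def1.3} by a natural zig-zag of weak equivalences. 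This is a general fact comparing, for any quasi-category $Z$, the simplicial hom $\mathrm{hom}_{\mathfrak{C}(Z)}(x,y)$ with $\mathrm{Map}_{Z}(x,y)$; the standard route (as in \cite{Lurie}, \cite{MappingSpaces}) is to factor the comparison through the ``necklace'' or ``left/right/two-sided'' mapping space models — e.g. one has natural maps relating $\mathrm{hom}_{\mathfrak{C}(Z)}(x,y)$ to $\mathrm{Hom}^{R}_{Z}(x,y) = \mathrm{Map}_{Z}(x,y)$ which are weak equivalences when $Z$ is fibrant. I would cite the relevant statement (Lurie's comparison of the rigidification mapping space with the right-pinched mapping space, or Dugger–Spivak) rather than reprove it, and then splice the two zig-zags together, instantiating $Z = \mathfrak{B}(\textbf{M})$, which is a quasi-category since $\textbf{M}$ is a fibrant simplicial category.

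The main obstacle is part (2): establishing that $\mathrm{hom}_{\mathfrak{C}(Z)}(x,y)$ and $\mathrm{Map}_{Z}(x,y)$ agree up to natural weak equivalence is genuinely the technical heart, since the left adjoint $\mathfrak{C}$ is hard to compute explicitly and the two mapping space constructions are built quite differently (one via the rigidification colimit, the other via a slice-construction pullback). Everything else — the colimit argument in part (1), and the passage from the counit being a DK-equivalence to the stated equivalences — is formal once \cref{thm1.8} and the definitions of DK-equivalence and Quillen equivalence are in hand. I would therefore lean on the cited comparison theorems for the mapping-space identification and keep the rest self-contained.
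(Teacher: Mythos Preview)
Your proposal is correct, and part (1) matches the paper's argument essentially verbatim: both observe $\pi_{0}\mathfrak{C}(\Delta^{n}) \cong [\mathbf{n}] \cong P(\Delta^{n})$, extend by colimits using that both functors are left adjoints, and then invoke the Quillen equivalence of \cref{thm1.8} for the remaining equivalence.

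For part (2) the paper takes a shorter route that bypasses the counit $\mathfrak{C}\mathfrak{B}(\mathbf{M}) \to \mathbf{M}$ entirely. Instead of factoring through $\mathrm{hom}_{\mathfrak{C}\mathfrak{B}(\mathbf{M})}(x,y)$, it uses Lurie's cosimplicial object $Q^{*}$ (from \cite[2.2.2.7, 2.2.2.13]{Lurie}) to obtain a direct identification $\mathrm{Map}_{\mathfrak{B}(\mathbf{M})}(x,y) \cong \mathrm{Sing}_{Q^{*}}(\mathbf{hom}_{\mathbf{M}}(x,y))$; this holds because an $n$-simplex of $\mathrm{Hom}^{R}$ is by adjunction a functor $\Phi^{n+1} \to \mathbf{M}$ with prescribed endpoints, and the relevant hom in $\Phi^{n+1}$ is the cube $Q^{n}$. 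One then gets a length-two zig-zag via the natural weak equivalences $|\mathrm{Sing}_{Q^{*}}(-)|_{Q^{*}} \to (-)$ on Kan complexes (\cite[2.2.2.10]{Lurie}). Your approach is equally valid and arguably more conceptual, since the comparison $\mathrm{hom}_{\mathfrak{C}(Z)}(x,y) \simeq \mathrm{Map}_{Z}(x,y)$ you invoke is the more general statement; the trade-off is that your zig-zag is one step longer and you are implicitly citing a harder theorem (the full mapping-space comparison for arbitrary quasi-categories $Z$) where the paper only needs the special case $Z = \mathfrak{B}(\mathbf{M})$, which admits the explicit description above.
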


\begin{proof}

A simplicial set is a colimit of its non-degenerate simplices and we have isomorphisms (natural in ordinal numbers $n$)
$$
P(\Delta^{n}) \cong [\textbf{n}] \cong \pi_{0}\mathfrak{C}(\Delta^{n}).
$$
$\pi_{0}\mathfrak{C}$ and  $P$ are both left adjoints and thus preserve colimits. Thus, we have
$$
P \cong \pi_{0} \mathfrak{C}.
$$
The remainder of 1. now follows from Theorem~\ref{thm1.8}. 

By \cite[Proposition 2.2.2.7, 2.2.2.13]{Lurie} there exists a cosimplicial object $Q^{*}$ and a natural weak equivalence
$$
|\mathrm{Sing}_{Q^{*}}(\textbf{hom}_{\textbf{M}}(x, y))|_{Q^{*}} \cong |\mathrm{Map}_{\mathfrak{B}(\textbf{M})}(x, y)|_{Q^{*}} \rightarrow     \mathrm{Map}_{\mathfrak{B}(\textbf{M})}(x, y).
$$

On the other hand, since mapping spaces in a quasi-category are Kan complexes, we also have a weak equivalence

$$
|\mathrm{Sing}_{Q^{*}}(\textbf{hom}_{\textbf{M}}(x, y))|_{Q^{*}} \rightarrow \textbf{hom}_{\textbf{M}}(x, y)
$$
 by \cite[Corollary 2.2.2.10]{Lurie}.
\end{proof}

\section{Local Homotopy Theory}\label{sec4}

The purpose of this section is to review the definitions of two model structures on simplicial presheaves on a Grothendieck site: the Jardine and local Joyal model structures. The weak equivalences in these model structures are, respectively, called local weak equivalences and local Joyal equivalences. In the case our topos has enough points, a map is a local weak equivalence (resp. local Joyal equivalence) iff it induces weak equivalences (Joyal equivalences) on each stalk. 

We will first review the technique of Boolean localization. This technique is used to define local weak equivalences in full generality. That is, when our topos does not have enough points. Finally, we will review some facts about quasi-injective descent, a higher stack condition, and discuss its relationship to the concept of hyperdescent from \cite{Lurie}.
\\

A \textbf{Grothendieck site} is a category-theoretic generalization of the concept of topological space. It is specified by choosing a category $C$ and a collection of coverings (sets of morphisms $\{ U_{i} \rightarrow U \}_{i \in I}$), subject to various axioms. Grothendieck sites were originally invented to produce new cohomology theories in algebraic geometry, by allowing one to discuss topologies that are finer than the usual Zariski topology on a scheme. A good overview of this concept is found in \cite[Chapter 3]{MM}.   

Throughout the rest of the paper, we fix a small Grothendieck site $\mathscr{C}$. We will write $s\textbf{Pre}(\mathscr{C})$ for the simplicial presheaves on $\mathscr{C}$. We write $s\textbf{Sh}(\mathscr{C})$ for the simplicial sheaves on $\mathscr{C}$. Given a simplicial set $K$ and a simplicial presheaf $X$, write $\mathrm{hom}(K, X)$ for the simplicial presheaf given by $U \mapsto \mathrm{hom}(K, X(U))$. We will identify simplicial sets with constant simplicial presheaves. Given a simplicial set and a simplicial presheaf $K$, we will write $X^{K}$ for the simplicial presheaf $U \mapsto X(U)^{K}$. We denote sheafification by 
$$
L^{2} : s\textbf{Pre}(\mathscr{C}) \rightarrow s\textbf{Sh}(\mathscr{C}).
$$

The technique of \textbf{Boolean localization} is essential to the study of model structures on simplicial presheaves; overviews of this technique are given in \cite[Section 2]{Nick} and \cite[Chapter 3]{local}.
 A Boolean localization  $$p = (p^{*}, p_{*}):  \textbf{Sh}(\mathscr{B}) \rightarrow  \textbf{Sh} (\mathscr{C})$$ 
 is a surjective geometric morphism with $\mathscr{B}$ a complete Boolean algebra equipped with the canonical site. The Boolean localization allows us to reason about local weak equivalences by replacing them with sectionwise weak equivalences, thereby reducing many statements about local equivalences to the classical setting. We will fix a Boolean localization for $\mathscr{C}$, denoted $p$, and write 
 
$$
p^{*} : s\textbf{Sh}(\mathscr{C}) \rightleftarrows : s\textbf{Sh}(\mathscr{B}) : p_{*}
$$
for the adjoint pair obtained by applying the left and right adjoint parts of $p$ levelwise to a simplicial object in sheaves (simplicial sheaf). Note that all Grothendieck topoi have a Boolean localization by a theorem of Barr (\cite[pg. 515]{MM}).

\begin{definition}\label{def2.1}
We call a simplicial set \textbf{finite} iff it has finitely many non-degenerate simplices. 
 Suppose that $i : K \subseteq L$ is an inclusion of finite simplicial sets, and $f: X \rightarrow Y$ is a map of simplicial presheaves. Say that $f$ has the \textbf{local right lifting property} with respect to $i$ if for every commutative diagram

$$
\xymatrix{
 K \ar[r] \ar[d] & X(U)\ar[d]\\
L \ar[r] & Y(U) \\ 
}
$$
there is some covering sieve $R \subseteq \mathrm{hom}(-, U), U \in \mathrm{Ob}(\mathscr{C})$, such that the lift exists in the diagram
$$
\xymatrix{
K \ar[r] \ar[d] & X(U) \ar[r]^{X(\phi)} & X(V) \ar[d]\\
L \ar[r] \ar@{.>}[urr] & Y(U) \ar[r]_{Y(\phi)} & Y(V)\\
}
$$
for every $\phi \in R$. 
\end{definition}

\begin{definition}\label{def2.2}
We say that a map $f :X \rightarrow Y$ of simplicial presheaves is a \textbf{local trivial fibration} iff it has the local right lifting property with respect to the inclusions $\partial \Delta^{n} \rightarrow \Delta^{n}$ for all $n \ge 0$. We call a map which has the local right lifting property with respect to the horn inclusions $\Lambda_{i}^{n} \rightarrow \Delta^{n}$  a \textbf{local Kan fibration}. A map which has the local right lifting property with respect to the inner horn inclusions is called a \textbf{local inner fibration}.
\end{definition}

\begin{remark}\label{rmk2.3}
It is clear that a sectionwise trivial fibration (resp. Kan fibration, inner fibration) is also a local trivial fibration (resp. Kan fibration, inner fibration). 
\end{remark}

\begin{lemma}\label{lem2.4} \textnormal{\cite[Corollaries 4.15, 4.23]{local}}.
A map $f : X \rightarrow Y$ is a local trivial fibration \textnormal{(}resp. local Kan fibration, local inner fibration\textnormal{)} iff $p^{*}L^{2}(f)$ is a sectionwise trivial fibration \textnormal{(}resp. sectionwise Kan fibration, sectionwise inner fibration\textnormal{)}. In particular, if $X$ is a presheaf of quasi-categories \textnormal{(}resp. Kan complexes\textnormal{)}, $p^{*}L^{2}(X)$ is a presheaf of quasi-categories \textnormal{(}resp. Kan complexes\textnormal{)}. 
\end{lemma}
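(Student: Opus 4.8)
The three equivalences are the instances, for the three relevant families of finite inclusions, of one uniform statement: for a set $S$ of inclusions of finite simplicial sets, a map $f\colon X\to Y$ of simplicial presheaves has the local right lifting property with respect to $S$ if and only if $p^{*}L^{2}(f)$ has the sectionwise right lifting property with respect to $S$. One then specialises to $S=\{\partial\Delta^{n}\subseteq\Delta^{n}\}$, to the set of all horn inclusions $\Lambda_{i}^{n}\subseteq\Delta^{n}$, and to the set of inner horn inclusions. My plan is to prove this uniform statement by a chain of three reductions. \emph{First}, $f$ has the local right lifting property with respect to $S$ if and only if $L^{2}(f)$ does: the unit $\eta\colon X\to L^{2}X$ is degreewise a local isomorphism, and since the simplicial sets in $S$ are finite one may transport a lifting problem for $L^{2}(f)$, and any solution of it, back and forth along $\eta$ after refining covering sieves, reducing matters to $f$.

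\emph{Second}, I would pass along the Boolean localization: a map $g$ of simplicial \emph{sheaves} on $\mathscr{C}$ has the local right lifting property with respect to $S$ if and only if $p^{*}(g)$ does on $\mathscr{B}$. The convenient way to see this is to reformulate: $g$ has the local right lifting property with respect to $\{\partial\Delta^{n}\subseteq\Delta^{n}\}$ (respectively the horn inclusions) exactly when, for each $n$, the relative matching map $X^{\Delta^{n}}\to X^{\partial\Delta^{n}}\times_{Y^{\partial\Delta^{n}}}Y^{\Delta^{n}}$ (respectively its horn analogue) is a local epimorphism on $0$-simplices. Now $p^{*}$ is exact and conservative because $p$ is a \emph{surjective} geometric morphism, so $p^{*}L^{2}$ commutes with the finite limits forming these matching objects and both preserves and reflects local epimorphisms; the reduction follows.

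\emph{Third}, and this is the heart of the matter, over the complete Boolean algebra $\mathscr{B}$ the local right lifting property with respect to $S$ coincides with the sectionwise one. Given a lifting problem over $b\in\mathscr{B}$ and a covering sieve over which it admits a solution, one refines that sieve to a \emph{partition} $\{b_{\alpha}\}$ of $b$ — this is where completeness of the Boolean algebra is used — and solves the restricted problem over each $b_{\alpha}$. Since $X$ is a sheaf and the $b_{\alpha}$ are pairwise disjoint with join $b$, the sheaf condition degenerates to $X(b)\cong\prod_{\alpha}X(b_{\alpha})$, so the partial solutions assemble uniquely to a solution over $b$. The identical disjointification argument shows that a local epimorphism of sheaves on $\mathscr{B}$ is sectionwise surjective, which is exactly what the second reduction invokes. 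Concatenating the three reductions gives the equivalence, and since only finiteness of the simplicial sets in $S$ was ever used, all three fibration classes are handled simultaneously.

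For the final assertion: if $X$ is a presheaf of quasi-categories then $X\to\ast$ is a sectionwise, hence by Remark~\ref{rmk2.3} local, inner fibration, so $p^{*}L^{2}(X)\to p^{*}L^{2}(\ast)$ is a sectionwise inner fibration; as $L^{2}$ and $p^{*}$ are left exact they preserve the terminal object, so $p^{*}L^{2}(\ast)\cong\ast$ and $p^{*}L^{2}(X)$ is a presheaf of quasi-categories. The Kan complex case is identical with inner horns replaced by all horns. The main obstacle is the Boolean step and its companion in the second reduction: making precise why a covering sieve over a complete Boolean algebra can be disjointified and why the sheaf condition then collapses to a product.
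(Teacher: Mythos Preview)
The paper does not supply its own proof of this lemma; it is stated with a citation to \cite[Corollaries 4.15, 4.23]{local} and left at that. Your three-step reduction---pass to sheaves via $L^{2}$, transport along $p^{*}$ using exactness and conservativity of a surjective geometric morphism, then exploit disjointification of covers on a complete Boolean algebra---is precisely the argument given in the cited source, and your reformulation of the local right lifting property as a local epimorphism condition on the relative matching map is the key device that makes the second reduction go through cleanly. The proposal is correct and matches the standard proof.
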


\begin{lemma}\label{lem2.5}
Let $K$ be a finite simplicial set, and $X$ a simplicial presheaf. Then there are natural isomorphisms:
\begin{enumerate}
\item{$p^{*}L^{2}\mathrm{hom}(K, X) \cong \mathrm{hom}(K, p^{*}L^{2}X)$}
\item{$p^{*}L^{2}(X^{K}) \cong p^{*}L^{2}(X)^{K}$.}
\end{enumerate}
\end{lemma}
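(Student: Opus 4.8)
The plan is to reduce both statements to the characterization of sheafification given by the composite $p^{*}L^{2}$ and to exploit that $K$ is finite. For part (1), I would first recall that $\mathrm{hom}(K, X)$ is built as a finite limit: since $K$ has finitely many non-degenerate simplices, $\mathrm{hom}(K, -)$ can be written as a finite limit of evaluations $X_{n} = \mathrm{hom}(\Delta^{n}, X)$, indexed over the (finite) category of non-degenerate simplices of $K$ and their face relations — equivalently, one uses the skeletal filtration of $K$ and the pushout-product description. Then the key input is that $L^{2}$ commutes with finite limits (sheafification is left exact) and that $p^{*}$, being the inverse image part of a geometric morphism, also commutes with finite limits. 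Applying this degreewise to the simplicial object, $p^{*}L^{2}\mathrm{hom}(K, X)$ is the same finite limit computed on $p^{*}L^{2}X$, which is exactly $\mathrm{hom}(K, p^{*}L^{2}X)$. I would make the naturality claim explicit by noting all the comparison maps are induced by the unit/counit of the relevant adjunctions, which are natural transformations.

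For part (2), the cleanest route is to observe that $X^{K}$ for a simplicial set $K$ is computed as $X^{K}(U) = X(U)^{K}$, and $(-)^{K}$ on simplicial sets is again a finite limit when $K$ is finite (indeed $Z^{K} = \lim_{\Delta^{n}\to K} Z^{\Delta^{n}}$ over the finite category of simplices of $K$, where $Z^{\Delta^{n}}$ is itself a finite limit of copies of $Z$ in the matching-object sense). Alternatively, and perhaps more transparently, one can reduce to part (1): there is a natural isomorphism of simplicial presheaves relating $X^{K}$ in simplicial degree $n$ to $\mathrm{hom}(K \times \Delta^{n}, X)$, and $K \times \Delta^{n}$ is finite, so part (1) applies levelwise in the external simplicial direction. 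Either way, the statement again follows from left-exactness of $L^{2}$ and $p^{*}$ together with finiteness of $K$.

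The main obstacle — really the only subtle point — is the passage between "finite simplicial set" and "finite limit", i.e. verifying carefully that $\mathrm{hom}(K, -)$ and $(-)^{K}$ are genuinely finite-limit functors when $K$ is finite, rather than just arbitrary limits. This is where finiteness of $K$ is essential: for an infinite simplicial set, $L^{2}$ and $p^{*}$ need not commute with the (infinite) limit, so the isomorphisms would fail. I would handle this by writing $K$ via its skeleta $\mathrm{sk}_k K$, using the pushouts $\mathrm{sk}_{k-1}K \cup (\text{copies of }\partial\Delta^{k}) \to \mathrm{sk}_{k-1}K \cup (\text{copies of }\Delta^{k})$, which for finite $K$ terminate after finitely many steps and involve only finitely many cells at each stage; applying $\mathrm{hom}(-, X)$ turns these pushouts into pullbacks, exhibiting $\mathrm{hom}(K,X)$ as an iterated finite pullback of the $X_n$'s. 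Since $p^{*}L^{2}$ preserves finite limits, it preserves these pullbacks, giving the result. The naturality in $X$ is then automatic from the construction.
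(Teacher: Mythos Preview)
Your proposal is correct and follows essentially the same approach as the paper: reduce to $K = \Delta^{n}$, use that a finite $K$ is a finite colimit of its non-degenerate simplices so that $\mathrm{hom}(K,-)$ becomes a finite limit, and invoke left-exactness of $p^{*}$ and $L^{2}$; part (2) is then deduced from part (1) via $(X^{K})_{n} \cong \mathrm{hom}(K\times\Delta^{n},X)$. Your version is simply more detailed about the skeletal filtration, which the paper leaves implicit.
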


\begin{proof}
The first statement is clearly true when $K = \Delta^{n}$. It follows in general from the facts that both $p^{*}$ and $L^{2}$ preserve finite limits and a simplicial set is a colimit of its non-degenerate simplices. The second statement is immediate from the first. 
\end{proof}

\begin{definition}\label{def2.6}
We say that a map $f: X \rightarrow Y$ of simplicial presheaves is a sectionwise weak equivalence (resp. sectionwise Joyal equivalence) iff 
$$
X(U) \rightarrow Y(U)
$$
is a weak equivalence (Joyal equivalence) for each $U \in \mathrm{Ob}(\mathscr{C})$.
\end{definition}

Given a simplicial presheaf $X$, we will write $\mathcal{S}_{\mathrm{Joyal}}(X)$ for the presheaf of quasi-categories obtained by applying fibrant replacement for the Joyal model structure sectionwise to $X$. We will also write $\mathrm{Ex}^{\infty}(X)$ for the $\mathrm{Ex}^{\infty}$ functor applied sectionwise to $X$.  

\begin{definition}\label{def2.7}
We say that a map $f: X \rightarrow Y$ is a local weak equivalence (resp. local Joyal equivalence) iff $L^{2}\mathrm{Ex}^{\infty}p^{*}L^{2}(f)$ (resp. $L^{2}\mathcal{S}_{\mathrm{Joyal}}p^{*}L^{2}(f)$) is a sectionwise weak equivalence (resp. sectionwise Joyal equivalence).   
\end{definition}
 
 The intuition behind Boolean localization is that it can be regarded as giving a `fat’ point for a site (for more details see \cite[Section 2]{Boolean}). Thus the definition of local weak equivalence generalizes the idea of stalkwise weak equivalence in the case of a topos with enough points. These definitions of weak equivalence are independent of the choice of Boolean localization.
 
 \begin{theorem}[Jardine]\label{thm2.8}
 There is a model structure on $s\mathbf{Pre}(\mathscr{C})$  in which the weak equivalences are the local weak equivalences and the cofibrations are monomorphisms. This is called the \textbf{Jardine model structure}. The fibrations are known as \textbf{injective fibrations}. 
 \end{theorem}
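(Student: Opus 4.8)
The plan is to follow the standard route for establishing the injective model structure on simplicial presheaves, exactly as in Jardine's original argument (and as surveyed in \cite[Chapter 5]{local}), organizing the work around the recognition theorem for cofibrantly generated model categories and the two axioms that are genuinely delicate: the factorization axiom requiring a bounded-cardinality generating set of trivial cofibrations, and the verification that the class of local weak equivalences has the right closure properties. First I would record that $s\mathbf{Pre}(\mathscr{C})$ is complete and cocomplete (limits and colimits computed sectionwise) and that the local weak equivalences of Definition~\ref{def2.7} satisfy the two-out-of-three property; the latter is immediate from the two-out-of-three property for sectionwise weak equivalences together with the fact that $L^{2}\mathrm{Ex}^{\infty}p^{*}L^{2}$ is a composite of functors. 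I would also note, via Lemma~\ref{lem2.4} and Lemma~\ref{lem2.5}, that $p^{*}L^{2}$ behaves well enough with finite limits that these composites are well-defined on maps and respect the relevant structure.

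Next I would define the cofibrations to be the monomorphisms, the weak equivalences to be the local weak equivalences, and the fibrations (the injective fibrations) to be the maps with the right lifting property against trivial cofibrations. The trivial cofibrations are closed under pushout, transfinite composition, and retract by general nonsense, so the only substantive points are: (i) CM5, the two factorizations, and (ii) CM4, that a trivial cofibration has the left lifting property against all fibrations (half of which is definitional) and that a fibration which is a weak equivalence is a trivial fibration, i.e. a map with RLP against all monomorphisms. For the cofibration--trivial fibration factorization one uses a small-object argument on the set of boundary inclusions $\partial \Delta^{n}\otimes \mathrm{hom}(-,U)\to \Delta^{n}\otimes \mathrm{hom}(-,U)$ and checks that a map with RLP against this set is a local weak equivalence (it is even a sectionwise trivial Kan fibration, hence a sectionwise, a fortiori local, weak equivalence) and that its RLP characterizes trivial fibrations. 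The heart of the matter is the trivial-cofibration--fibration factorization: one must produce a \emph{set} $J$ of trivial cofibrations such that a map has RLP against $J$ iff it has RLP against all trivial cofibrations. This is the classical bounded cofibration lemma: if $\kappa$ is a cardinal bounding the size of $\mathscr{C}$ and the site data, then any trivial cofibration $X\to Y$ with $Y$ of cardinality $\le\kappa$ lies in the saturation of a set of such, and more importantly, given any trivial cofibration $A\to B$ and any $\kappa$-bounded subobject $C\subseteq B$, there is a $\kappa$-bounded $D$ with $C\subseteq D\subseteq B$ such that $D\cap A\to D$ is again a trivial cofibration. I would prove this bounded cofibration statement using the characterization of local weak equivalences via $p^{*}L^{2}\mathrm{Ex}^{\infty}$, a countable-exhaustion argument to absorb the failure of the local condition at each stage, and the fact that sheafification and the Boolean localization functor $p^{*}$ commute with $\kappa$-filtered colimits for $\kappa$ large enough.

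I expect the bounded cofibration lemma to be the main obstacle: it is where the interplay between the combinatorics of the site, the (non-finitary in general) sheafification functor, and the stalk-theoretic description of local weak equivalences all have to be controlled simultaneously, and it is the only place where the specific definition of local weak equivalence (rather than formal model-category nonsense) is used in an essential way. Once it is in hand, the small-object argument gives CM5, and CM4 follows by the standard retract trick: a fibration that is a weak equivalence is, by CM5, a retract of a map with RLP against all monomorphisms, hence itself has that property. Finally I would note that the construction is manifestly independent of the choice of Boolean localization, since Definition~\ref{def2.7} is (the class of local weak equivalences does not depend on $p$), so the model structure is well-defined; this is all detailed in \cite{local}, and I would simply cite it for the routine verifications while spelling out the bounded cofibration step.
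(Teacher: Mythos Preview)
The paper does not give a proof of Theorem~\ref{thm2.8}; it is stated as a known result attributed to Jardine and used as background (with \cite{local} as the implicit reference throughout Section~\ref{sec4}). So there is no proof in the paper to compare your proposal against.

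That said, your sketch is the standard argument and is correct in outline: the bounded cofibration lemma is indeed the crux, and the small-object argument plus the retract trick handle CM5 and CM4 once that lemma is in hand. Since you already plan to cite \cite{local} for the routine parts, you are effectively doing what the paper does, only with more detail spelled out. If your goal is to match the paper, a one-line attribution to Jardine with a pointer to \cite[Chapter 5]{local} would suffice.
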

 
 Given two simplicial presheaves, $X, Y$, write $\textbf{hom}(X, Y)$ for the simplicial set given by 
 $$
\textbf{hom}(X, Y)_{n} = \mathrm{hom}(X \times \Delta^{n}, Y).
$$
 This simplicial hom gives the Jardine model structure the structure of a simplicial model category. 
 
 \begin{theorem}\label{thm2.x} \textnormal{\cite[Theorem 3.3]{Nick}}.
 There is a model structure on $s\mathbf{Pre}(\mathscr{C})$ in which the weak equivalences are the local Joyal equivalences and the cofibrations are monomorphisms. This is called the \textbf{local Joyal model structure.} The fibrations are called \textbf{quasi-injective} fibrations.  
 \end{theorem}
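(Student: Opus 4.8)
Following the approach of \cite{Nick}, I would construct this model structure by Jardine's method for injective model structures on simplicial presheaves, with Boolean localization doing the work at the one genuinely hard step. Since the cofibrations (monomorphisms) and the weak equivalences (local Joyal equivalences, Definition~\ref{def2.7}) are already prescribed, all the content lies in the factorization and lifting axioms. The first task is to record the formal properties of the class $\mathcal{W}$ of local Joyal equivalences: it has two-out-of-three, is closed under retracts, contains every sectionwise Joyal equivalence, and contains every map $f : X \to Y$ with the right lifting property against all monomorphisms of $s\mathbf{Pre}(\mathscr{C})$. The last point holds because such an $f$ restricts, against $\partial \Delta^{n} \times \mathrm{hom}(-,U) \to \Delta^{n} \times \mathrm{hom}(-,U)$, to a trivial Kan fibration $X(U) \to Y(U)$ for every $U$, hence a sectionwise trivial fibration in the Joyal model structure and so a sectionwise --- thus local --- Joyal equivalence. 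These properties are read off from the definition of $\mathcal{W}$ through the composite $L^{2}\mathcal{S}_{\mathrm{Joyal}}p^{*}L^{2}$, using Lemma~\ref{lem2.4} and the exactness of $p^{*}$ and $L^{2}$.

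The technical core is a bounded cofibration lemma. Fix a sufficiently large regular cardinal $\kappa$ (larger than the cardinalities of $\mathrm{Mor}(\mathscr{C})$ and of the Boolean algebra $\mathscr{B}$), and call $X$ $\kappa$-bounded if each $X(U)_{n}$ has cardinality at most $\kappa$. I would prove: if $i : X \to Y$ is a trivial cofibration and $A \subseteq Y$ is $\kappa$-bounded, then there is a $\kappa$-bounded $B$ with $A \subseteq B \subseteq Y$ such that $X \cap B \to B$ is again a trivial cofibration. This is exactly where Boolean localization is used: applying $p^{*}L^{2}$ converts the condition into one that is sectionwise over the Boolean site $\mathscr{B}$, and one then runs a cardinality-controlled transfinite exhaustion, enlarging $B$ step by step so that it absorbs all the witnesses to the relevant sectionwise lifting and equivalence data. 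I expect this to be the main obstacle: it requires showing those conditions are detected on $\kappa$-bounded subobjects, and it requires careful control of the interaction between $L^{2}$, $\mathcal{S}_{\mathrm{Joyal}}$, and the cardinality bound.

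Granting the bounded cofibration lemma, let $J$ be a set of representatives for the isomorphism classes of trivial cofibrations with $\kappa$-bounded codomain, and declare a \textbf{quasi-injective fibration} to be a map with the right lifting property against $J$. Descending to the Boolean site, one checks that trivial cofibrations are stable under pushout and transfinite composition, so the small object argument against $J$ factors any map as a trivial cofibration followed by a quasi-injective fibration, and a Zorn's-lemma argument fed by the bounded cofibration lemma shows that the right lifting property against $J$ is equivalent to the right lifting property against all trivial cofibrations. Similarly, the small object argument against $\{\partial \Delta^{n} \times \mathrm{hom}(-,U) \to \Delta^{n} \times \mathrm{hom}(-,U)\}$ factors any map as a cofibration followed by a map with the right lifting property against all monomorphisms, and it remains to identify this latter class with the trivial quasi-injective fibrations. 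One inclusion is the observation above that such maps lie in $\mathcal{W}$; for the converse, if $p : X \to Y$ is a quasi-injective fibration lying in $\mathcal{W}$, factor it as $X \xrightarrow{j} Z \xrightarrow{q} Y$ with $j$ a monomorphism and $q$ having the right lifting property against all monomorphisms, note $q \in \mathcal{W}$ so that $j$ is a trivial cofibration by two-out-of-three, solve the lifting problem given by the square with left map $j$, right map $p$, top map $\mathrm{id}_{X}$ and bottom map $q$ to obtain a retraction exhibiting $p$ as a retract of $q$ in the arrow category, and conclude that $p$ too has the right lifting property against all monomorphisms. With these pieces in hand, CM1--CM5 are immediate: limits and colimits are computed pointwise, two-out-of-three and the retract axiom come from the properties of $\mathcal{W}$ and the stability of lifting classes under retracts, and the two factorizations together with the lifting axiom follow from the above. (One could instead aim to realize this structure as the left Bousfield localization of the injective Joyal model structure at the covering-sieve inclusions $R \hookrightarrow \mathrm{hom}(-,U)$, but matching the localized weak equivalences with Definition~\ref{def2.7} requires essentially the same Boolean-localization bookkeeping, so the direct route seems preferable.)
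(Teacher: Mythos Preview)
The paper does not supply a proof of this theorem: it is stated with a bare citation to \cite[Theorem 3.3]{Nick} and no argument is given. Your outline is a faithful reconstruction of the method used in that reference --- Jardine's template for injective-style local model structures, transported to the Joyal setting via Boolean localization: one records the closure properties of local Joyal equivalences, proves a bounded cofibration lemma over the Boolean site (correctly identified as the technical heart), and then runs the small-object and retract arguments to produce the two factorizations and to identify trivial quasi-injective fibrations with maps having the right lifting property against all monomorphisms. There is nothing further in the present paper to compare against.
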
 
 
 We collect the following facts from \cite[Section 3]{Nick} and \cite[Chapter 4]{local}.
 
 \begin{lemma}\label{lem2.9} $\,$
 
 \begin{enumerate}
 \item{$p^{*}, L^{2}$  preserve and reflect both local weak and local Joyal equivalences.}
 \item{A map of sheaves of quasi-categories \textnormal{(}resp. Kan complexes\textnormal{)} on a complete Boolean algebra $\mathscr{B}$ is a local Joyal \textnormal{(}resp. weak\textnormal{)} equivalence iff it is a sectionwise Joyal \textnormal{(}resp. weak\textnormal{)} equivalence.}
 \item{A map $f: X \rightarrow Y$ of presheaves of quasi-categories \textnormal{(}Kan complexes\textnormal{)} is a local Joyal \textnormal{(}weak\textnormal{)} equivalence iff $p^{*}L^{2}(f)$ is a sectionwise Joyal \textnormal{(}weak\textnormal{)} equivalence. }
  \item{Sectionwise Joyal \textnormal{(}weak\textnormal{)} equivalences are local Joyal \textnormal{(}weak\textnormal{)} equivalences.}
  \item{Local trivial fibrations are both local Joyal and local weak equivalences.}
  \end{enumerate}
 \end{lemma}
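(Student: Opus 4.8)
The plan is to treat the five statements as a package, proving (1)–(3) first and then deducing (4) and (5), exploiting the definitions of local weak and local Joyal equivalence (Definition~\ref{def2.7}) together with the fibrant-replacement functors $\mathrm{Ex}^{\infty}$ and $\mathcal{S}_{\mathrm{Joyal}}$ and the Boolean-localization machinery.

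First I would establish (2), the sectionwise criterion on a complete Boolean algebra $\mathscr{B}$: if $f$ is a map of sheaves of quasi-categories (resp.\ Kan complexes) on $\mathscr{B}$, then $f$ is a local Joyal (resp.\ weak) equivalence iff it is a sectionwise Joyal (resp.\ weak) equivalence. The forward direction is formal once one unwinds Definition~\ref{def2.7}; the substantive direction is that a sectionwise Joyal equivalence of presheaves of quasi-categories on $\mathscr{B}$ survives application of $L^{2}\mathcal{S}_{\mathrm{Joyal}}p^{*}L^{2}$ and stays a sectionwise equivalence. Here the key point is that on a complete Boolean algebra $\mathscr{B}$, with its canonical topology, the constant sheaf functor and $p^{*}$ are especially well-behaved — sheafification $L^{2}$ is exact and in fact sections over a Boolean algebra behave like a "fat point" — so that $L^{2}$ and $p^{*}$ preserve sectionwise quasi-categories (cf.\ Lemma~\ref{lem2.4}) and sectionwise Joyal equivalences between them. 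I expect to cite the cited material from \cite[Section 3]{Nick} and \cite[Chapter 4]{local} for the precise statement that on $\mathscr{B}$ local weak/Joyal equivalences of fibrant-valued presheaves coincide with sectionwise ones.

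Next, (3) follows by combining (2) with Lemma~\ref{lem2.4}: given $f : X \to Y$ between presheaves of quasi-categories on $\mathscr{C}$, Lemma~\ref{lem2.4} says $p^{*}L^{2}(X)$, $p^{*}L^{2}(Y)$ are again presheaves of quasi-categories, so on $\mathscr{B}$ we may apply (2); and unwinding Definition~\ref{def2.7}, $f$ is a local Joyal equivalence iff $L^{2}\mathcal{S}_{\mathrm{Joyal}}p^{*}L^{2}(f)$ is sectionwise Joyal, which — since $p^{*}L^{2}(f)$ already has quasi-category values — is equivalent to $p^{*}L^{2}(f)$ itself being sectionwise Joyal. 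Then (1), that $p^{*}$ and $L^{2}$ preserve and reflect local weak and local Joyal equivalences, reduces to the definitional observation that $p^{*}L^{2}$ is idempotent-like on the relevant functorial expression: $p^{*}L^{2}\bigl(p^{*}L^{2}(f)\bigr)$ and $p^{*}L^{2}(f)$ have the same sectionwise homotopy type because $p^{*}L^{2}$ on sheaves over $\mathscr{B}$ is (up to the canonical equivalence) the identity, using that $\mathscr{B}$ already carries the canonical site and $p$ is surjective. For (4), a sectionwise Joyal (resp.\ weak) equivalence $f$ maps under $p^{*}L^{2}$ to a sectionwise Joyal (resp.\ weak) equivalence (again Lemma~\ref{lem2.4} plus exactness of $p^{*}L^{2}$), hence is a local equivalence by (3) after a further fibrant replacement — or more directly, sectionwise equivalences are local equivalences for any reasonable such localization. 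Finally (5): a local trivial fibration $f$ becomes, by Lemma~\ref{lem2.4}, a sectionwise trivial fibration after $p^{*}L^{2}$, hence a sectionwise weak equivalence and a sectionwise Joyal equivalence (trivial Kan fibrations are both), so (3)/(4) give that $f$ is a local weak and a local Joyal equivalence.

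The main obstacle I anticipate is (2) — specifically verifying carefully that fibrant replacement in the Joyal model structure, applied sectionwise and then sheafified over a complete Boolean algebra, does not destroy sectionwise Joyal equivalences, i.e.\ that $\mathcal{S}_{\mathrm{Joyal}}$ interacts well with $L^{2}$ and $p^{*}$ on $\mathscr{B}$. This is where the special features of the canonical site on a complete Boolean algebra (sheafification commuting with enough of the relevant colimits, and the absence of pathology that forces one to pass to hypercovers) are doing the real work, and it is essentially the content imported from \cite[Section 3]{Nick} and \cite[Chapter 4]{local}; the rest of the lemma is bookkeeping around Definition~\ref{def2.7} and Lemma~\ref{lem2.4}.
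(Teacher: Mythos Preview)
The paper does not actually prove Lemma~\ref{lem2.9}: the sentence immediately preceding it reads ``We collect the following facts from \cite[Section 3]{Nick} and \cite[Chapter 4]{local},'' and no argument is given. So there is no in-paper proof to compare your proposal against; your sketch is already more than the paper itself provides, and its overall architecture---establish the Boolean-algebra criterion (2), bootstrap to (3) via Lemma~\ref{lem2.4}, and read off (4) and (5)---is the standard route taken in the cited sources.

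One place where your sketch is loose is the argument for (1). You write that ``$p^{*}L^{2}$ on sheaves over $\mathscr{B}$ is (up to the canonical equivalence) the identity,'' but $p^{*}$ goes from $\mathbf{Sh}(\mathscr{C})$ to $\mathbf{Sh}(\mathscr{B})$, so applying it to a sheaf on $\mathscr{B}$ does not parse as stated. The cleaner argument is: for $L^{2}$, the unit $X \to L^{2}X$ is itself a local weak/Joyal equivalence (this is a basic fact about sheafification in both theories, proved in \cite[Chapter 4]{local} and \cite[Section 3]{Nick}), so preservation and reflection follow from two-out-of-three. For $p^{*}$, reflection uses that $p$ is a \emph{surjective} geometric morphism (so $p^{*}$ is faithful at the level of the defining invariants), and preservation uses that the Boolean localization of $\mathscr{B}$ can be taken to be the identity, so Definition~\ref{def2.7} on $\mathscr{B}$ unwinds to a condition that is visibly implied by the condition on $\mathscr{C}$. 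You have correctly identified (2) as the substantive step requiring the special features of complete Boolean algebras, and that is indeed where the work in the cited references lies.
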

 
 The preceding lemma is useful because it allows us to replace stalkwise reasoning with sectionwise reasoning. This makes the local model structures easier to work with.

 We write $\mathcal{L}_{\mathrm{inj}}(X), \mathcal{L}_{\mathrm{Joyal}}(X)$, respectively, for the fibrant replacement of a simplicial presheaf in the Jardine and local Joyal model structures. 
 
\begin{definition}\label{def2.10}
We say that a presheaf of Kan complexes (quasi-categories) $X$ satisfies injective descent (quasi-injective descent) iff 
$X \rightarrow \mathcal{L}_{\mathrm{inj}}(X)$ ($X \rightarrow \mathcal{L}_{\mathrm{Joyal}}(X)$) is a sectionwise weak (Joyal) equivalence. 
\end{definition}

Given a presheaf of quasi-categories $X$, we write $J(X)$ for the presheaf obtained by applying $J$ to $X$ sectionwise. The following theorem is \cite[Theorem 4.7]{Nick2}:

\begin{theorem}\label{thm2.11}
 Suppose that $X$ is a presheaf of quasi-categories. Then $X$ satisfies quasi-injective descent iff for each $n \in \mathbb{N}$,
$J(X^{\Delta^{n}})$ satisfies injective descent. In particular, if $X$ is a presheaf of Kan complexes, then it satisfies quasi-injective descent iff it satisfies injective descent. 
\end{theorem}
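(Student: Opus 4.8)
The plan is to reduce everything to the characterization of local Joyal equivalences in terms of sectionwise Joyal equivalences after applying $p^{*}L^{2}$ (Lemma~\ref{lem2.9}(3)), and then to use the fact, built into Theorem~\ref{thm1.6}, that a Joyal equivalence of quasi-categories is detected by the mapping spaces and essential surjectivity — equivalently, that a map of quasi-categories $f$ is a Joyal equivalence iff $J(f^{\Delta^{n}})$ is a weak equivalence of Kan complexes for all $n$. Indeed, $\tau_0(K,X)=\pi_0 J(X^K)$ for a quasi-category $X$ by Theorem~\ref{thm1.0}, and the mapping spaces and the information of $\pi_0$ can all be recovered from the Kan complexes $J(X^{\Delta^n})$; this is essentially the content of \cite[Theorem 8.1]{MappingSpaces} as cited in Theorem~\ref{thm1.6}. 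I would first record this purely quasi-categorical statement as the engine of the proof: for $f\colon X\to Y$ a map of quasi-categories, $f$ is a Joyal equivalence iff $J(f^{\Delta^n})$ is a weak equivalence for every $n\in\mathbb{N}$.

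First I would fix a fibrant model $Z=\mathcal{L}_{\mathrm{Joyal}}(X)$ together with the canonical map $\eta\colon X\to Z$; by Theorem~\ref{thm2.x}, $Z$ is a presheaf of quasi-categories (quasi-injective fibrant objects are sectionwise quasi-categories, since $*$ is quasi-injective fibrant and fibrations are in particular local inner fibrations with fibrant target, hence sectionwise inner fibrations over a point after the identifications of Lemma~\ref{lem2.4} — I'd cite the relevant statement from \cite{Nick}). Now $X$ satisfies quasi-injective descent iff $\eta$ is a sectionwise Joyal equivalence. By the engine above, applied sectionwise, this holds iff $J(\eta^{\Delta^n})\colon J(X^{\Delta^n})\to J(Z^{\Delta^n})$ is a sectionwise weak equivalence for every $n$. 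The next step is to identify $J(Z^{\Delta^n})$ with an injective-fibrant model of $J(X^{\Delta^n})$: the functor $X\mapsto J(X^{\Delta^n})$ should send quasi-injective fibrant presheaves to injective-fibrant presheaves of Kan complexes, and should send local Joyal equivalences to local weak equivalences. The latter follows from Lemma~\ref{lem2.9}(1),(3): after $p^{*}L^{2}$ a local Joyal equivalence becomes a sectionwise Joyal equivalence of presheaves of quasi-categories, and then sectionwise $J((-)^{\Delta^n})$ of a Joyal equivalence is a weak equivalence of Kan complexes by the engine, so by Lemma~\ref{lem2.5} (which commutes $p^{*}L^{2}$ past $(-)^{\Delta^n}$ and, with a small additional argument, past $J$) we get a local weak equivalence.

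Hence $J(\eta^{\Delta^n})\colon J(X^{\Delta^n})\to J(Z^{\Delta^n})$ is a local weak equivalence with injective-fibrant target, i.e. it exhibits $J(Z^{\Delta^n})$ as $\mathcal{L}_{\mathrm{inj}}(J(X^{\Delta^n}))$ up to sectionwise weak equivalence. Therefore $J(X^{\Delta^n})$ satisfies injective descent iff $J(\eta^{\Delta^n})$ is a sectionwise weak equivalence, and combining with the reduction of the previous paragraph, $X$ satisfies quasi-injective descent iff $J(X^{\Delta^n})$ satisfies injective descent for all $n$. The final sentence is then immediate: if $X$ is a presheaf of Kan complexes, then $X^{\Delta^0}=X$, $J(X)=X$, and $J(X^{\Delta^n})\to X$ is a sectionwise weak equivalence for every $n$ since $\Delta^n$ is contractible (and $X$ is sectionwise Kan), so all the conditions collapse to injective descent for $X$ itself.

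The main obstacle I anticipate is the commutation of $J$ with $p^{*}L^{2}$ and the verification that $X\mapsto J(X^{\Delta^n})$ preserves fibrant objects. Lemma~\ref{lem2.5} handles $(-)^{\Delta^n}$ cleanly, but $J$ is a right adjoint computed sectionwise via a maximal-subcomplex construction, and one must check it is compatible with stalks/Boolean localization — concretely, that $p^{*}L^{2}J(Y)\cong J(p^{*}L^{2}Y)$ for $Y$ a presheaf of quasi-categories. This should follow from the characterization in Theorem~\ref{thm1.0} of $J$ as picking out the $1$-simplices that extend over $I=B\pi\Delta^1$ (a finite simplicial set), together with exactness of $p^{*}L^{2}$ and Lemma~\ref{lem2.4}; but it is the one spot where care is genuinely needed rather than routine. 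The preservation of injective-fibrant objects is then a formal consequence, since $J((-)^{\Delta^n})$ is right adjoint to a functor that is easily seen to send trivial cofibrations for the Jardine structure to trivial cofibrations for the local Joyal structure.
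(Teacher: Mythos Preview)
The paper does not prove Theorem~\ref{thm2.11}; it is quoted verbatim from \cite[Theorem 4.7]{Nick2}. So there is no in-paper argument to compare against, and your proposal should be read as an independent proof sketch.

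Your overall strategy is sound and is essentially the one behind the cited result: use the ``engine'' that a map $f$ of quasi-categories is a Joyal equivalence iff each $J(f^{\Delta^{n}})$ is a weak equivalence (this is the Joyal--Tierney comparison via $k^{!}$, cf.\ the functors in Lemma~\ref{lem2.14}), take $\eta\colon X\to Z=\mathcal{L}_{\mathrm{Joyal}}(X)$, and reduce both conditions to the statement that $J(\eta^{\Delta^{n}})$ is a sectionwise weak equivalence. The verifications you list---that $(-)^{\Delta^{n}}$ commutes with $p^{*}L^{2}$ (Lemma~\ref{lem2.5}), that $J$ sends local Joyal equivalences of presheaves of quasi-categories to local weak equivalences (this is \cite[Lemma 4.5]{Nick2}, used later in the paper in Lemma~\ref{lem4.8}), and that $(-)^{\Delta^{n}}$ preserves quasi-injective fibrant objects---are all correct and routine.

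There is one genuine gap. You assert that $J(Z^{\Delta^{n}})$ is \emph{injective fibrant}, arguing that ``$J((-)^{\Delta^{n}})$ is right adjoint to a functor that sends Jardine trivial cofibrations to local Joyal trivial cofibrations.'' But $J$ is only defined on quasi-categories; it is not the right adjoint of a functor defined on all of $s\mathbf{Pre}(\mathscr{C})$, so the lifting argument does not go through as stated. The fix is exactly the one the paper itself deploys in Lemma~\ref{lem2.14}: replace $J$ by $k^{!}$, which \emph{is} right Quillen from the local Joyal to the Jardine structure (since $k_{!}$ preserves monomorphisms and sends local weak equivalences to local Joyal equivalences), so $k^{!}(Z^{\Delta^{n}})$ is genuinely injective fibrant; then use the natural sectionwise trivial fibration $k^{!}\to J$ of \cite[Lemma 3.11]{Nick2} to conclude that $J(Z^{\Delta^{n}})$ \emph{satisfies injective descent}. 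That weaker conclusion is all you need: a local weak equivalence between presheaves of Kan complexes, both of which satisfy injective descent, is a sectionwise weak equivalence (cf.\ \cite[Corollary 5.15]{local}), and this closes the biconditional exactly as you outlined.
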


The remainder of the section will be devoted to relating the notion of injective descent to the notion of descent along hypercovers. 

Suppose that $U \in \mathrm{Ob}( \mathscr{C})$. Then we write $\bar{U}$ for the representable simplicial presheaf $\mathrm{hom}(-, U)$. 

\begin{definition}\label{def2.12}
A \textbf{hypercover} of $U$ is a local trivial fibration $V \rightarrow \bar{U}$, such that each $V_{n}$ is a coproduct of representables. 

If $X$ is a presheaf of Kan complexes (quasi-categories), we say that $X$ \textbf{satisfies descent} with respect to a hypercover $V \rightarrow \bar{U}$ iff 
$$
X(U) \rightarrow \mathrm{holim}_{n \in \Delta^{*}} X(V_{n})
$$
is a weak (Joyal) equivalence, where the homotopy limit is taken in the standard (Joyal) model structure on simplicial sets.  
\end{definition}

\begin{example}\label{exam2.13}
Let  $\mathcal{V}  = \{ U_{i} \rightarrow U\}_{i \in I}$ be a cover. Then there is a cosimplicial object $\tilde{C}(\mathcal{V})$ in $\mathscr{C}$, which is given in degree n by the formula
$$
\coprod_{\alpha_{0}, \cdots \alpha_{n} \in I} U_{\alpha_{0}} \times_{U} U_{\alpha_{2}} \cdots \times_{U} U_{\alpha_{n}}.
$$
This cosimplicial object represents a hypercover, known as the \textbf{Cech nerve}. 
If $\mathcal{V}$ is generated by a single morphism $\phi$, we write $\tilde{C}(\phi)$ for $\tilde{C}(\mathcal{V})$. We say that a presheaf of Kan complexes (quasi-categories)  \textbf{satisfies descent} with respect to the cover $\mathcal{V}$ iff it satisfies descent with respect to the hypercover $\tilde{C}(\mathcal{V})$. 

\end{example}

\begin{lemma}\label{lem2.14}
Suppose that $X$ is a presheaf of quasi-categories. Then $X$ satisfies descent with respect to a hypercover $V \rightarrow \bar{U}$ iff $J(X^{\Delta^{n}})$ satisfies descent with respect to $V \rightarrow \bar{U}$ for each $n \in \mathbb{N}$.
\end{lemma}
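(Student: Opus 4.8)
The plan is to reduce the statement about the presheaf of quasi-categories $X$ to a statement about presheaves of Kan complexes, where it can be attacked via the machinery already in place (Theorem~\ref{thm2.11}, Definition~\ref{def2.12}). First I would unwind the definition of descent along a hypercover $V \to \bar U$: the condition is that $X(U) \to \mathrm{holim}_{n \in \Delta^{*}} X(V_{n})$ is a Joyal equivalence. Since Joyal equivalences between quasi-categories are detected by being fully faithful and essentially surjective (Theorem~\ref{thm1.6}), and since essential surjectivity is controlled by $\pi_{0}J(-)$ (Lemma~\ref{lem1.5}) while full faithfulness is controlled by the mapping spaces, the natural move is to translate everything through the functors $J$ and $(-)^{\Delta^{n}}$.

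The key technical input I would want is that taking homotopy limits commutes appropriately with $J$ and with $(-)^{\Delta^{n}}$, at least up to the relevant notion of equivalence. Concretely: $(-)^{\Delta^{n}}$ is a right Quillen functor (an exponential), so it preserves homotopy limits, giving $\big(\mathrm{holim}_{n} X(V_{n})\big)^{\Delta^{m}} \simeq \mathrm{holim}_{n} X(V_{n})^{\Delta^{m}}$; and $J$, being a right adjoint which preserves Kan fibrations and limits of the relevant diagrams, commutes with the homotopy limit of a diagram of quasi-categories, so $J\big(\mathrm{holim}_{n} X(V_{n})^{\Delta^{m}}\big) \simeq \mathrm{holim}_{n} J\big(X(V_{n})^{\Delta^{m}}\big) = \mathrm{holim}_{n} J(X^{\Delta^{m}})(V_{n})$. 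Combining these, the map $X(U) \to \mathrm{holim}_{n} X(V_{n})$, after applying $J\big((-)^{\Delta^{m}}\big)$, becomes (up to weak equivalence) the map $J(X^{\Delta^{m}})(U) \to \mathrm{holim}_{n} J(X^{\Delta^{m}})(V_{n})$ whose being a weak equivalence is exactly descent for $J(X^{\Delta^{m}})$ along $V \to \bar U$.

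To finish, I would invoke the characterization of Joyal equivalences of quasi-categories in terms of the Kan complexes obtained by applying $J$ to all the path-object quasi-categories $(-)^{\Delta^{m}}$: a map $f \colon A \to B$ of quasi-categories is a Joyal equivalence if and only if $J(f^{\Delta^{m}})$ is a weak equivalence of Kan complexes for every $m$ (this is precisely the sectionwise content underlying Theorem~\ref{thm2.11}, and follows from Theorem~\ref{thm1.6} together with Lemma~\ref{lem1.5} and the mapping-space description of full faithfulness). Applying this criterion to $f = \big(X(U) \to \mathrm{holim}_{n} X(V_{n})\big)$ and matching it against the same criterion for the $J(X^{\Delta^{m}})$ via the commutation isomorphisms above yields the equivalence of the two descent conditions.

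The main obstacle I anticipate is the bookkeeping around homotopy limits: one must be careful that the $\mathrm{holim}$ in the definition of descent for a presheaf of quasi-categories is taken in the Joyal model structure while the one for presheaves of Kan complexes is taken in the Kan–Quillen model structure, and that $J$ intertwines these. The cleanest route is probably to choose an explicit model for the homotopy limit (e.g. the Bousfield–Kan $\mathrm{holim}$ as an end of cotensors against the cosimplicial resolution of $\Delta^{*}$), note that its building blocks are exponentials and products which $J$ and $(-)^{\Delta^{m}}$ handle well, and verify that a fibrant replacement of the cosimplicial diagram $n \mapsto X(V_{n})$ in the Reedy–Joyal sense pushes forward to a Reedy-fibrant diagram of Kan complexes after applying $J$. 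Once that compatibility is nailed down, the rest is a formal consequence of Theorem~\ref{thm1.6}, Lemma~\ref{lem1.5}, and the definitions.
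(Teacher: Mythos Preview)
Your overall strategy matches the paper's: reduce to showing that $J\bigl((-)^{\Delta^{m}}\bigr)$ commutes with the cosimplicial homotopy limit, use that $(-)^{\Delta^{m}}$ is right Quillen for the Joyal structure, and then argue that $J$ intertwines the Joyal and Kan--Quillen homotopy limits. The paper also leaves implicit the final step (a Joyal equivalence of quasi-categories is detected by $J\bigl((-)^{\Delta^{m}}\bigr)$ for all $m$), so your making it explicit is fine.

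The one genuine difference is in how the commutation of $J$ with $\mathrm{holim}$ is established. You propose to argue directly that $J$ preserves the relevant limits and fibrations, and you correctly flag this as the delicate point. The paper sidesteps the issue entirely: it invokes a right Quillen functor $k^{!}\colon \mathrm{sSet}_{\mathrm{Joyal}} \to \mathrm{sSet}_{\mathrm{Kan}}$ (from \cite{JT1}, \cite{Nick2}) together with a natural trivial fibration $k^{!} \to J$. Since $k^{!}$ is an honest right Quillen functor on all of $\mathrm{sSet}$, it commutes with homotopy limits on the nose, and the comparison with $J$ is then a levelwise weak equivalence. This buys a two-line proof in place of the Bousfield--Kan bookkeeping you anticipate; your route should also succeed, but the surrogate $k^{!}$ is the cleaner device and worth knowing.
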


\begin{proof}
As shown in \cite[Theorem 1.19]{JT1} and \cite[Corollary 3.14]{Nick2}, there is a Quillen functor
$$
k_{!} : \mathrm{sSet} \rightleftarrows \mathrm{sSet} : k^{!}
$$
from the standard model structure to the Joyal model structure. There is also a natural trivial fibration $k^{!} \rightarrow J$ (\cite[Lemma 3.11]{Nick2}). We have a chain of weak equivalences, natural in $V$:
\begin{align*}
J((\mathrm{holim}_{n \in \Delta^{*}} X(V_{n}))^{\Delta^{n}}) \simeq J(\mathrm{holim}_{n \in \Delta^{*}} (X(V_{n})^{\Delta^{n}}) ) \simeq \\
k^{!}(\mathrm{holim}_{n \in \Delta^{*}} (X(V_{n})^{\Delta^{n}}) ) \simeq \mathrm{holim}_{n \in \Delta^{*}} k^{!}(X(V_{n})^{\Delta^{n}} ) \simeq \\
 \mathrm{holim}_{n \in \Delta^{*}} J(X(V_{n})^{\Delta^{n}} )
\end{align*}
The first weak equivalence follows from the fact that $(-)^{\Delta^{n}}$ is a right Quillen functor for the Joyal model structure (see \cite[Corollary 2.2.5.4]{Lurie}), and hence commutes with homotopy limits.
The second and fourth follow from \cite[Lemma 3.11]{Nick2}. The third follows from the fact that $k^{!}$ is a right Quillen functor, and thus commutes with homotopy limits. 

\end{proof}

Recall that there is a \textbf{global injective (Kan) model structure} on $s\textbf{Pre}(\mathscr{C})$ in which the weak equivalences are sectionwise weak equivalences and the cofibrations are monomorphisms. 

\begin{theorem}\label{thm2.15}\textnormal{\cite[Theorem 1.1]{Dugger-Isaksen}}.
A presheaf of Kan complexes is a fibrant object in the Jardine model structures iff
\begin{enumerate}
\item{It is fibrant for the global injective model structure.}
\item{It satisfies descent with respect to each hypercover.}
\end{enumerate}
\end{theorem}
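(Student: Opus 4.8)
The plan is to prove Theorem~\ref{thm2.15} by exhibiting the two conditions as a factorization of the statement ``$X$ is Jardine-fibrant'', using the characterization of Jardine equivalences via hypercovers together with the standard lifting/retract machinery of model categories. First I would recall (from Dugger--Hollander--Isaksen, which the cited \cite{Dugger-Isaksen} builds on) that the Jardine (= local injective) model structure is a left Bousfield localization of the global injective model structure at the set of hypercover maps $V \to \bar U$ — more precisely, at the maps $\mathrm{hocolim}_{n} V_n \to \bar U$. Granting that, a globally injective-fibrant object $X$ is Jardine-fibrant if and only if it is \emph{local} with respect to every hypercover, i.e. $\mathbf{hom}(\bar U, X) \to \mathbf{hom}(\mathrm{hocolim}_n V_n, X)$ is a weak equivalence of simplicial sets for every hypercover $V \to \bar U$. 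The content of the theorem is then to identify this ``locality'' condition with descent in the sense of Definition~\ref{def2.12}.

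The key computation is that, for $X$ globally injective-fibrant, $\mathbf{hom}(\bar U, X) \simeq X(U)$ by the (enriched) Yoneda lemma, while $\mathbf{hom}(\mathrm{hocolim}_n V_n, X) \simeq \mathrm{holim}_n \mathbf{hom}(V_n, X)$ since $\mathbf{hom}(-, X)$ sends homotopy colimits to homotopy limits when $X$ is fibrant. Writing $V_n = \coprod_{\alpha} \bar U_\alpha$ (each $V_n$ is a coproduct of representables by Definition~\ref{def2.12}), we again apply Yoneda to get $\mathbf{hom}(V_n, X) \simeq \prod_\alpha X(U_\alpha) = X(V_n)$, so the whole mapping-space comparison becomes exactly $X(U) \to \mathrm{holim}_{n \in \Delta^*} X(V_n)$. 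Hence ``local at the hypercover $V \to \bar U$'' is literally ``satisfies descent with respect to $V \to \bar U$'', and this matches condition (2). Condition (1) is by definition part of being Jardine-fibrant (Jardine cofibrations are monomorphisms, same as global injective cofibrations, and local weak equivalences include sectionwise ones, so every Jardine-fibrant object is global-injective-fibrant); conversely, for an object that is already global-injective fibrant, being local at all hypercovers is exactly what upgrades it to Jardine-fibrant, by the characterization of fibrant objects in a left Bousfield localization.

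I would organize the write-up in two implications. For ($\Rightarrow$): if $X$ is Jardine-fibrant, then (1) holds because $\mathrm{id}: X \to X$ has the right lifting property against all monomorphisms that are local weak equivalences, in particular against all monomorphisms that are sectionwise (hence local) weak equivalences; and (2) holds because the map $\mathrm{hocolim}_n V_n \to \bar U$ induced by a hypercover is a local weak equivalence (a hypercover is a local trivial fibration, so $V \to \bar U$ is a local weak equivalence degreewise, and one checks the induced map on homotopy colimits is too), whence $X$ sees it as a weak equivalence on mapping spaces, which unwinds to descent as above. For ($\Leftarrow$): assuming (1) and (2), one must show $X$ has the RLP against every trivial cofibration $A \hookrightarrow B$ of the Jardine structure; using that such a map factors, up to the global injective structure, through maps built from hypercovers, the locality hypothesis (2) together with global-injective fibrancy (1) supplies the lift by a standard small-object/retract argument.

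The main obstacle is the ($\Leftarrow$) direction: one genuinely needs the fact that local weak equivalences between globally cofibrant objects are generated, in the Bousfield-localization sense, by the hypercover maps — this is the substance of Dugger--Isaksen and is not formal. Concretely, the technical heart is showing that if $X$ is global-injective fibrant and satisfies hyperdescent, then $X$ is local with respect to \emph{every} local weak equivalence, not just the generating hypercovers; this requires knowing that every local trivial cofibration can be built from hypercover inclusions via pushouts, transfinite composition, and retracts. I would quote \cite{Dugger-Isaksen} (or Dugger--Hollander--Isaksen) for exactly this input and spend the bulk of the proof on the clean mapping-space identification in the previous paragraph, which is where the hypotheses ``each $V_n$ a coproduct of representables'' and ``$X$ globally injective-fibrant'' get used.
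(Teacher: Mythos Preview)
The paper does not prove Theorem~\ref{thm2.15} at all: it is stated with a citation to \cite[Theorem 1.1]{Dugger-Isaksen} and used as a black box. Your sketch is essentially the argument of Dugger--Hollander--Isaksen (the Jardine model structure as a left Bousfield localization of the global injective structure at hypercovers, followed by the Yoneda/mapping-space identification), so there is nothing to compare against in this paper; your outline is correct and matches the cited source.
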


\begin{corollary}\label{cor2.16}
A presheaf of Kan complexes \textnormal{(}quasi-categories\textnormal{)} satisfies injective \textnormal{(}quasi-injective\textnormal{)} descent iff it satisfies descent with respect to each hypercover.
\end{corollary}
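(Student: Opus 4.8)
Throughout write $\mathcal L_{\mathrm{inj}}$ for the Jardine-fibrant replacement and $\mathrm{holim}$ for the homotopy limit of Definition~\ref{def2.12}. The plan is to reduce the statement to the case of presheaves of Kan complexes, where the Dugger--Isaksen characterization of Jardine-fibrant objects (Theorem~\ref{thm2.15}) applies, and then to move information across fibrant replacements using the observation that the hyperdescent condition is invariant under sectionwise weak equivalences between presheaves of Kan complexes.

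Granting the Kan-complex case, I would dispatch the quasi-category case by a shuffle of quantifiers. For a presheaf of quasi-categories $X$, each $X^{\Delta^{n}}$ is again a presheaf of quasi-categories (as $(-)^{\Delta^{n}}$ preserves quasi-categories, cf.\ \cite[Corollary 2.2.5.4]{Lurie}), so $J(X^{\Delta^{n}})$ is a presheaf of Kan complexes. By Theorem~\ref{thm2.11}, $X$ satisfies quasi-injective descent iff every $J(X^{\Delta^{n}})$ satisfies injective descent, which by the Kan-complex case holds iff every $J(X^{\Delta^{n}})$ satisfies descent with respect to every hypercover. By Lemma~\ref{lem2.14}, $X$ satisfies descent with respect to a hypercover $V\to\bar U$ iff each $J(X^{\Delta^{n}})$ does. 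Since both conditions are universally quantified over all hypercovers and all $n$, the order of quantification is irrelevant and they coincide.

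For the Kan-complex case I would first isolate the transfer principle: a sectionwise weak equivalence $g\colon X\to Y$ of presheaves of Kan complexes carries hyperdescent in either direction. Indeed, for a hypercover $V\to\bar U$ the simplicial sets $X(V_{n})$ and $Y(V_{n})$ are products of Kan complexes (the $V_{n}$ being coproducts of representables), so $g$ induces a levelwise weak equivalence of cosimplicial Kan complexes and hence a weak equivalence on $\mathrm{holim}_{n\in\Delta^{*}}$; comparing with the weak equivalence $X(U)\to Y(U)$ and using two-out-of-three in the square of Definition~\ref{def2.12} gives the claim. The forward implication is then immediate: if $X$ satisfies injective descent, then $X\to\mathcal L_{\mathrm{inj}}(X)$ is a sectionwise weak equivalence onto a Jardine-fibrant presheaf of Kan complexes, which satisfies descent with respect to every hypercover by Theorem~\ref{thm2.15}, and the transfer principle returns this to $X$.

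The reverse implication is the substantive part. Given a presheaf of Kan complexes $X$ satisfying descent with respect to every hypercover, I would factor $X\to *$ in the global injective model structure as a trivial cofibration $j\colon X\to X'$ followed by a global injective fibration; then $X'$ is globally injective fibrant, in particular a presheaf of Kan complexes, and inherits hyperdescent from $X$ by the transfer principle, so $X'$ is Jardine-fibrant by Theorem~\ref{thm2.15}. Taking the Jardine-fibrant replacement $i\colon X\to\mathcal L_{\mathrm{inj}}(X)$ to be a Jardine trivial cofibration and lifting against $X'\to *$ produces $f\colon\mathcal L_{\mathrm{inj}}(X)\to X'$ with $fi=j$; by two-out-of-three (and Lemma~\ref{lem2.9}(4), which makes $j$ a local weak equivalence) $f$ is a local weak equivalence between objects that are fibrant and cofibrant in the simplicial Jardine model structure, hence a simplicial homotopy equivalence. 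Because the simplicial tensor here is $(-)\times\Delta^{1}$ with $\Delta^{1}$ a constant presheaf, the defining homotopies evaluate sectionwise, so each $f(U)$ is a simplicial homotopy equivalence of simplicial sets, hence a weak equivalence; thus $f$ is a sectionwise weak equivalence, and with $j=fi$ also sectionwise, two-out-of-three forces $X\to\mathcal L_{\mathrm{inj}}(X)$ to be a sectionwise weak equivalence, i.e.\ $X$ satisfies injective descent. The hard part, and the reason for the detour through $X'$, is precisely that injective descent for $X$ says nothing about global fibrancy of $X$, so Theorem~\ref{thm2.15} cannot be applied to $X$ directly; the two inputs that rescue the argument---sectionwise-homotopy-invariance of hyperdescent on presheaves of Kan complexes, and the fact that a local weak equivalence between Jardine-fibrant presheaves is automatically sectionwise (Whitehead's theorem in the simplicial Jardine model structure)---are standard in local homotopy theory but need to be invoked with care.
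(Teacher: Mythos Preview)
Your proof is correct and follows the same overall route as the paper: the quasi-category case is reduced to the Kan-complex case via Theorem~\ref{thm2.11} and Lemma~\ref{lem2.14}, exactly as in the paper, and the Kan-complex case rests on Theorem~\ref{thm2.15}. The paper simply asserts that the Kan-complex case is ``immediate from Theorem~\ref{thm2.15}'' without further comment; what you have written is a careful unpacking of that word ``immediate'', supplying precisely the two ingredients one needs (invariance of hyperdescent under sectionwise weak equivalence between presheaves of Kan complexes, and the Whitehead-type argument that a local weak equivalence between Jardine-fibrant objects is sectionwise) to bridge the gap between Theorem~\ref{thm2.15}, which characterizes Jardine-\emph{fibrant} objects, and injective descent, which is a weaker condition. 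Your detour through a global-injective fibrant replacement $X'$ is the standard maneuver here and is implicitly what the paper has in mind; compare the use of \cite[Corollary~5.15]{local} later in the proof of Lemma~\ref{lem4.8}, which packages the same Whitehead step.
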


\begin{proof}
The case of a presheaf of Kan complexes is immediate from Theorem~\ref{thm2.15}. The case of presheaves of quasi-categories then follows from Theorem~\ref{thm2.11} and Lemma~\ref{lem2.14}.
\end{proof}

\begin{remark}\label{rmk2.17}
In \cite{Lurie} a presheaf of Kan complexes is said to \textbf{satisfy descent} iff it satisfies descent for each cover, and a presheaf of Kan complexes is said to satisfy \textbf{hyperdescent} iff it satisfies descent with respect to each hypercover. An example of a presheaf of Kan complexes that satisfies descent but not hyperdescent is given in \cite[6.5.4.8]{Lurie}.

It is important to keep in mind this distinction, as two of the main results of the paper (Theorem~\ref{thm4.10} and Theorem~\ref{thm4.12}) shows that hyperdescent for a presheaf of quasi-categories is equivalent to descent for all coverings and hyperdescent for all mapping space presheaves. 
\end{remark}

\section{The Efffective Descent Condition for a Presheaf of Quasi-Categories}\label{sec5}

In this section we introduce a quasi-categorical analogue of the effective descent condition of \cite[Definition 10.1]{Simpson}, which is somewhat weaker than descent with respect to a cover in the sense of Example~\ref{exam2.13}. This condition is needed to formulate one of the major results of the paper (Theorem~\ref{thm4.10}). 

This section is devoted to proving a number of characterizations of effective descent to be used in Section~\ref{sec6}. Of particular importance is the characterization of effective descent in terms of a $\mathrm{holim}$ indexed by elements of the cover (Lemma~\ref{lem3.3}). In the case the cover is generated by a single element, there is a final inclusion $\Delta \subseteq R$, so we are reduced to taking $\mathrm{holim}$ over a cosimplicial diagram (Example~\ref{exam3.6}). This allows us to deduce Theorem~\ref{thm4.12} from Theorem~\ref{thm4.10} in Section~\ref{sec6}.
\\

There is a \textbf{global injective (Joyal) model structure} on $s\textbf{Pre}(\mathscr{C})$, in which the cofibrations and weak equivalences are, respectively, sectionwise cofibrations and sectionwise weak equivalences for the Joyal model structure.  

Recall that if $X$ is a simplicial presheaf and $U \in \mathrm{Ob}(\mathscr{C})$, then we write $X|_{U}$ for the presheaf obtained by composing the forgetful functor from the slice category $(\mathscr{C}/U)^{\mathrm{op}} \rightarrow \mathscr{C}^{\mathrm{op}}$ with $X$. If $R$ is a covering of some object $U$, we can regard it as a subcategory of the slice category $\mathscr{C}/U$ and write $X|_{R}$ for the restriction of $X|_{U}$ to $R^{\mathrm{op}}$.  

\begin{definition}\label{def3.1}
Let $X$ be a presheaf of quasi-categories. Then $X$ is said to satisfy \textbf{effective descent} with respect to a covering sieve $R$ of an object $U$ if and only if 
$[*|_{U}, X]_{q} \rightarrow [*|_{R}, X]_{q}$ is surjective, where $[\, , \,]_{q}$ denotes maps in the homotopy category of the global injective Joyal model structure. $X$ is said to satisfy \textbf{effective descent} if and only if it satisfies effective descent with respect to each covering sieve of each object. 

\end{definition}

\begin{remark}\label{rmk3.2}
Note that the effective descent condition is invariant under sectionwise Joyal equivalence. 
\end{remark}

\begin{lemma}\label{lem3.3}
Let $X$ be a presheaf of quasi-categories. The following are equivalent:

\begin{enumerate}
\item{X satisfies effective descent with respect to a cover $R$ of $U$.}
\item{$\pi_{I}(*|_{U}, X') \rightarrow \pi_{I}(*|_{R}, X')$ is surjective for some global injective fibrant replacement $X'$ of $X$. Here, $I = B \pi (\Delta^{1})$ is a constant simplicial presheaf, and $\pi_{I}$ means $I$-homotopy classes of maps.}
\item{ $X'(U) \rightarrow (\mathrm{lim}_{\phi : V \rightarrow U, \phi \in R^{\mathrm{op}} } X'(V))$ is essentially surjective.}
\item{$X(U) \rightarrow \mathrm{holim}_{\phi : V \rightarrow U, \phi \in R^{\mathrm{op}} } X(V)$ is essentially surjective. Here, $\mathrm{holim}$ means the homotopy limit with respect to the Joyal model structure.}
\end{enumerate}

\end{lemma}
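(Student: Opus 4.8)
The plan is to establish the cycle of implications $(1) \Leftrightarrow (2)$, $(2) \Leftrightarrow (3)$, and $(3) \Leftrightarrow (4)$, translating the abstract homotopy-categorical condition of Definition~\ref{def3.1} into successively more concrete statements about mapping spaces and homotopy limits. The equivalence $(1) \Leftrightarrow (2)$ is essentially bookkeeping: by the definition of the homotopy category of a simplicial model category, $[\,*|_U, X\,]_q$ and $[\,*|_R, X\,]_q$ are computed as homotopy classes of maps into a fibrant replacement $X'$, and since $*|_U$ and $*|_R$ are cofibrant, these coincide with $\pi_0 \mathbf{hom}(*|_U, X')$ and $\pi_0 \mathbf{hom}(*|_R, X')$. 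The cylinder for the Joyal structure on cofibrant objects is detected by $I = B\pi(\Delta^1)$ (this is the content of Theorem~\ref{thm1.0}, applied sectionwise: a $1$-simplex is a Joyal-homotopy iff it extends over $I$), so $\pi_I$ computes exactly these homotopy classes; the statement is independent of the choice of $X'$ since any two global injective fibrant replacements are connected by a sectionwise Joyal equivalence, under which $\pi_I(*|_U, -)$ and $\pi_I(*|_R, -)$ are invariant.

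For $(2) \Leftrightarrow (3)$, the key observation is that $*|_R$, viewed as the constant presheaf on the subcategory $R \subseteq \mathscr{C}/U$, is the colimit $\mathrm{colim}_{\phi \in R} *|_V$, and that by adjointness $\mathbf{hom}(*|_R, X') = \lim_{\phi : V \to U,\, \phi \in R^{\mathrm{op}}} X'(V)$ as simplicial sets — more precisely, since $X'$ is global injective fibrant, $X'|_R$ is injective fibrant on $R^{\mathrm{op}}$ and the sectionwise limit already computes the correct object. Similarly $\mathbf{hom}(*|_U, X') = X'(U)$. Thus the map $\pi_I(*|_U, X') \to \pi_I(*|_R, X')$ is identified with $\pi_I(*, X'(U)) \to \pi_I(*, \lim_\phi X'(V))$, which by Lemma~\ref{lem1.5} (together with Theorem~\ref{thm1.0}, identifying $\pi_I(*, Y)$ with $\pi_0 J(Y)$ for a quasi-category $Y$, hence with $\pi_0 \mathrm{Iso}(P(Y))$) is surjective precisely when $X'(U) \to \lim_\phi X'(V)$ is essentially surjective. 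One needs here that the relevant simplicial sets are quasi-categories: $X'(U)$ is since $X'$ is global injective fibrant (hence sectionwise Joyal fibrant), and the limit $\lim_\phi X'(V)$ is a quasi-category because quasi-categories are closed under limits and $X'|_R$ takes quasi-category values.

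Finally, $(3) \Leftrightarrow (4)$ is the comparison between the strict limit and the homotopy limit. Since $X'$ is a global injective fibrant replacement of $X$, the diagram $X'|_R$ on $R^{\mathrm{op}}$ is injectively fibrant, so its homotopy limit is computed by the strict limit: $\mathrm{holim}_\phi X'(V) \simeq \lim_\phi X'(V)$ in the Joyal model structure. On the other hand $X(U) \to X'(U)$ is a Joyal equivalence (sectionwise, in particular at $U$), and $\mathrm{holim}$ is invariant under objectwise Joyal equivalence of diagrams, so $\mathrm{holim}_\phi X(V) \simeq \mathrm{holim}_\phi X'(V)$; combining, $X(U) \to \mathrm{holim}_\phi X(V)$ is essentially surjective iff $X'(U) \to \lim_\phi X'(V)$ is. Essential surjectivity is invariant under Joyal equivalence by Theorem~\ref{thm1.6} (a Joyal equivalence is in particular essentially surjective, and essential surjectivity has two-out-of-three behavior along equivalences via Lemma~\ref{lem1.5}). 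The main obstacle I anticipate is the second step: carefully justifying that the naive sectionwise limit $\lim_\phi X'(V)$ really does model the derived mapping presheaf $\mathbf{hom}(*|_R, X')$ — i.e., that global injective fibrancy of $X'$ descends to injective fibrancy of the restricted diagram $X'|_R$ over the subcategory $R^{\mathrm{op}}$, so that no further fibrant replacement of the diagram is needed before taking the limit. This requires knowing that restriction along $R \hookrightarrow \mathscr{C}/U$ and along $\mathscr{C}/U \to \mathscr{C}$ preserves injective fibrancy, which follows since these restrictions are right Quillen (being right adjoints that preserve the generating trivial cofibrations, as cofibrations and weak equivalences are detected objectwise-then-locally), but the bookkeeping with sieves-as-subcategories should be spelled out.
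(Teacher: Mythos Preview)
Your proposal is correct and follows essentially the same route as the paper: $(1)\Leftrightarrow(2)$ via $I$ as an interval object for the global injective Joyal structure, $(2)\Leftrightarrow(3)$ via the identification $\mathbf{hom}(*|_{R},X')\cong\lim_{\phi\in R^{\mathrm{op}}}X'(V)$ together with $\pi_{I}(*,Y)=\pi_{0}J(Y)$, and $(3)\Leftrightarrow(4)$ because the strict limit over the fibrant $X'$ already computes the homotopy limit.

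One small recalibration: your ``main obstacle'' is slightly misdirected. You do not need to verify that restriction along $R\hookrightarrow\mathscr{C}/U\to\mathscr{C}$ preserves injective fibrancy of diagrams. Once you identify $*|_{R}$ with the sieve $R\subseteq\mathrm{hom}(-,U)$ as an object of $s\mathbf{Pre}(\mathscr{C})$ (via left Kan extension of the terminal presheaf), the equality $\mathbf{hom}(*|_{R},X')=\lim_{\phi\in R^{\mathrm{op}}}X'(V)$ is literal, by the colimit description of the sieve and Yoneda; and it has the correct homotopy type simply because $*|_{R}$ is cofibrant and $X'$ is fibrant \emph{in $s\mathbf{Pre}(\mathscr{C})$}. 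No further fibrancy of the restricted diagram is required. This is why the paper dispatches $(3)\Leftrightarrow(4)$ in a single line.
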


\begin{proof}

($1. \iff 2.$)
By Remark~\ref{rmk3.2}, $X$ satisfies effective descent iff $X'$ does. $I$ is an interval object for the global injective Joyal model structure, and everything is cofibrant, so we have identifications (natural in $A$) 
$$[A, X] = [A, X'] = \pi_{I}(A, X'). $$
The result follows.
\\

\noindent
$(2. \iff 4)$. We have identifications (natural in subcategories $W$ of $\mathscr{C}/U$):
$$
\pi_{I}(*|_{W}, X') = \pi_{I}  (*, (\mathrm{lim}_{\phi : V \rightarrow U, \phi \in W^{\mathrm{op}} } X'(V))) = \pi_{0}J((\mathrm{lim}_{\phi : V \rightarrow U, \phi \in W^{\mathrm{op}} } X'(V))).
$$ 
Thus, $[*|_{U}, X] \rightarrow [*|_{R}, X]$ can be naturally identified with 
 $$
\pi_{0}(JX'(U)) \cong \pi_{0}J (\mathrm{lim}_{\phi : V \rightarrow U, \phi \in (\mathscr{C}/U)^{\mathrm{op}} } X'(V)) \rightarrow \pi_{0}J( \mathrm{lim}_{\phi : V \rightarrow U, \phi \in R^{\mathrm{op}} } X'(V)).  
$$   
      
Now, $(3 \iff 4)$ follows immediately from the definition of homotopy limits. 
\end{proof}

\begin{lemma}\label{lem3.4}
Suppose that $X$ is fibrant for the global injective Joyal model structure on $s\mathbf{Pre}(\mathscr{C})$. Then $X$ satisfies effective descent with respect to a covering $R$ of $U$ if and only if it has the right lifting property with respect to $*|_{R} \rightarrow *|_{U}$.
\end{lemma}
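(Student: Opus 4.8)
The plan is to exploit the characterization of effective descent from part (1) of Lemma~\ref{lem3.3} together with the interval object $I = B\pi(\Delta^1)$ and the simplicial structure of the global injective Joyal model structure. Since $X$ is already fibrant for this model structure, $[*|_R, X]_q$ and $[*|_U, X]_q$ are represented on the nose by $\pi_I(*|_R, X)$ and $\pi_I(*|_U, X)$; moreover all objects in sight are cofibrant (monomorphisms are the cofibrations, and $*|_R \hookrightarrow *|_U$ is a monomorphism), so the hom-sets in the homotopy category are genuine $I$-homotopy classes of actual maps. The statement ``$[*|_U, X]_q \to [*|_R, X]_q$ is surjective'' must be upgraded to ``every map $*|_R \to X$ extends (up to homotopy, hence on the nose since $X$ is fibrant and the inclusion is a cofibration) along $*|_R \hookrightarrow *|_U$,'' which is exactly the right lifting property of $X \to *$ against $*|_R \to *|_U$.

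The key steps, in order, are: (i) Observe that $*|_R \to *|_U$ is a cofibration (monomorphism of simplicial presheaves), so lifting problems against $X \to *$ are homotopy-invariant in the appropriate sense. (ii) Unwind ``surjective on $[-,X]_q$'': given $\beta : *|_R \to X$, surjectivity produces $\alpha : *|_U \to X$ with $\alpha|_{*|_R}$ $I$-homotopic to $\beta$, i.e. there is $H : *|_R \times I \to X$ with $H|_0 = \alpha|_{*|_R}$, $H|_1 = \beta$. (iii) Use that $X$ is fibrant and that $*|_R \times \{0\} \cup *|_U \times \{1\} \hookrightarrow *|_U \times I$ — wait, more simply: form the pushout-product / homotopy extension. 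Since $X$ is injective fibrant and $*|_R \hookrightarrow *|_U$ is a cofibration, the map $X^{*|_U} \to X^{*|_R}$ is a Joyal fibration of simplicial sets; combined with $I$ being an interval, an $I$-homotopy on the restriction lifts to an $I$-homotopy on $*|_U$ starting at $\alpha$, whose endpoint $\alpha' : *|_U \to X$ then satisfies $\alpha'|_{*|_R} = \beta$ strictly. That gives the genuine lift. (iv) Conversely, if $X$ has the strict RLP against $*|_R \to *|_U$, then every $\beta$ lifts on the nose, so in particular $[*|_U,X]_q \to [*|_R,X]_q$ is surjective; invoke Lemma~\ref{lem3.3}(1).

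The main obstacle is step (iii): passing from a homotopy-commutative lift to a strictly commutative one. This requires knowing that restriction along the cofibration $*|_R \hookrightarrow *|_U$ induces a Joyal fibration $X^{*|_U} \to X^{*|_R}$ of simplicial sets — which follows from the fact that the global injective Joyal model structure is a simplicial model category (so $\mathbf{hom}(-,X)$ takes cofibrations to fibrations when $X$ is fibrant, cf. Theorem~\ref{thm2.x} and the simplicial enrichment $\mathbf{hom}$ defined after Theorem~\ref{thm2.8}) — and then using the homotopy lifting property of that fibration against $\{0\} \hookrightarrow I$, which is a Joyal trivial cofibration since $I$ is contractible. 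Everything else is a formal unwinding of definitions, using Lemma~\ref{lem3.3}(1) to identify effective descent with the surjectivity condition and Remark~\ref{rmk3.2} only implicitly (here $X$ is already fibrant, so no replacement is needed).
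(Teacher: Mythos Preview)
Your argument is correct and is essentially the paper's own: produce $t : *|_U \to X$ with $t|_R$ $I$-homotopic to the given $s : *|_R \to X$, then use the homotopy extension property along the cofibration $*|_R \hookrightarrow *|_U$ to straighten $t$ into a strict lift.  The paper carries this out by solving the lifting problem for the inclusion $(I \times *|_R) \cup_{*|_R} *|_U \hookrightarrow I \times *|_U$ against the fibrant $X$ directly, whereas you pass to the adjoint formulation via the map $\mathbf{hom}(*|_U,X) \to \mathbf{hom}(*|_R,X)$ and lift the $I$-path there; these are the same lifting problem.

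One small correction to your justification in step (iii): the global injective Joyal model structure is \emph{not} a simplicial model category in the usual (Kan-enriched) sense, and neither Theorem~\ref{thm2.x} nor the discussion after Theorem~\ref{thm2.8} asserts any such thing (the latter is stated only for the Jardine structure).  What you actually need, and what you in fact use, is the enrichment over the \emph{Joyal} model structure on $\mathrm{sSet}$: concretely, that the pushout-product of the monomorphism $*|_R \hookrightarrow *|_U$ with the Joyal trivial cofibration $\{0\} \hookrightarrow I$ is a sectionwise Joyal trivial cofibration.  This is immediate by inspecting sections (over any $V$ it is a disjoint union of identities and copies of $\{0\} \hookrightarrow I$), and is exactly the fact the paper invokes when it says ``the vertical map is a sectionwise Joyal equivalence.''
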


\begin{proof}
Sufficiency follows from 2. of Lemma~\ref{lem3.3}. We prove necessity. 
Suppose that we have a diagram
$$
\xymatrix
{
\, *|_{R} \ar[r]^{s} \ar[d] & X\\
\, *|_{U} \ar@{.>}[ur] &
}
$$
By hypothesis, we can choose $t : *|_{U} \rightarrow X$ and a homotopy $h: I \times *|_{R} \rightarrow X$ between $s$ and $t|_{R}$. We can find a lift
$$
\xymatrix
{
I|_{R} \times_{*|_{R}} *|_{U} \ar[d] \ar[r]^>>>>>{(h, t)} & X \\
I|_{U} \ar@{.>}[ur]_{\phi} & 
}
$$ 
since the vertical map is a sectionwise Joyal equivalence. The map $\phi |_{0}$ gives the required lift in our first diagram. 
\end{proof}

We say that a subcategory $C \subseteq R$ \textbf{generates} the covering sieve iff this inclusion is final in the sense of \cite[Section IX.3]{CWM}.

\begin{lemma}\label{lem3.5}
Suppose that $C$ generates $R$. Then $X$ satisfies effective descent with respect to a cover $R$ of $U$ iff 
$$
X(U) \rightarrow  (\mathrm{holim}_{\phi : V \rightarrow U, \phi \in C^{\mathrm{op}} } X(V))
$$
is essentially surjective.
\end{lemma}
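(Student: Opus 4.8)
The plan is to reduce the statement about effective descent with respect to $R$ to the corresponding statement with respect to $C$, using the finality of the inclusion $C \subseteq R$ together with the characterization of effective descent in Lemma~\ref{lem3.3}(4). By Lemma~\ref{lem3.3}, $X$ satisfies effective descent with respect to $R$ if and only if $X(U) \rightarrow \mathrm{holim}_{\phi \in R^{\mathrm{op}}} X(V)$ is essentially surjective, where the homotopy limit is taken over the (opposite of the) full subcategory $R \subseteq \mathscr{C}/U$. So it suffices to show that the natural comparison map
$$
\mathrm{holim}_{\phi : V \rightarrow U, \phi \in R^{\mathrm{op}}} X(V) \rightarrow \mathrm{holim}_{\phi : V \rightarrow U, \phi \in C^{\mathrm{op}}} X(V)
$$
is a Joyal equivalence, compatibly with the map down from $X(U)$; then essential surjectivity of one composite is equivalent to that of the other (a Joyal equivalence is in particular essentially surjective and reflects essential surjectivity of maps into its source, by Theorem~\ref{thm1.6} and Lemma~\ref{lem1.5}).

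The key step is therefore: if $j : C \hookrightarrow R$ is final (equivalently $j^{\mathrm{op}} : C^{\mathrm{op}} \hookrightarrow R^{\mathrm{op}}$ is initial/cofinal), then restriction along $j^{\mathrm{op}}$ induces a weak equivalence on homotopy limits of any diagram of quasi-categories. I would argue this as follows. Since the Joyal model structure is simplicial and $(-)^{\Delta^n}$ is right Quillen (used already in Lemma~\ref{lem2.14}), it is enough — by the argument of Lemma~\ref{lem2.14} — to check this after applying $J((-)^{\Delta^n})$ for all $n$, i.e. to reduce to the statement that cofinal functors induce weak equivalences on homotopy limits of diagrams of Kan complexes taken in the standard model structure. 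This last fact is completely classical: a homotopy-final (in the $\infty$-categorical sense) functor, and in particular a final functor between ordinary categories in the sense of \cite[Section IX.3]{CWM}, induces an equivalence on homotopy limits; one can cite this directly, or prove it via the cofinality theorem after replacing the diagram by an injective-fibrant one so that $\mathrm{holim}$ is computed as an honest limit of a fibrant diagram over $R^{\mathrm{op}}$ restricted along $j^{\mathrm{op}}$.

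Concretely, I would run the argument at the level of global injective fibrant replacements, as in the proof of Lemma~\ref{lem3.3}. Pick a global injective (Joyal) fibrant replacement $X'$ of $X$. Then $\mathrm{holim}_{\phi \in R^{\mathrm{op}}} X(V) \simeq \lim_{\phi \in R^{\mathrm{op}}} X'(V)$ and similarly over $C^{\mathrm{op}}$, and the restriction map between these strict limits is the comparison we want. Finality of $C \subseteq R$ says exactly that the restriction functor on presheaves is cofinal in the relevant sense, so — since $X'$ restricted to $R^{\mathrm{op}}$ is a diagram of fibrant objects in a model category and the limit over $R^{\mathrm{op}}$ is a homotopy limit — the map $\lim_{R^{\mathrm{op}}} X' \rightarrow \lim_{C^{\mathrm{op}}} X'$ is a Joyal equivalence; here I use that for a final subcategory the induced map on (homotopy) limits of an objectwise-fibrant diagram is a weak equivalence, which is \cite[Section IX.3]{CWM} combined with the standard homotopy-limit cofinality statement. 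Chasing through the identifications of Lemma~\ref{lem3.3}(2)--(4), and using Lemma~\ref{lem1.5} to phrase essential surjectivity as surjectivity of $\pi_0 J$, the surjectivity of $\pi_0 J(X'(U)) \to \pi_0 J(\lim_{R^{\mathrm{op}}} X')$ is equivalent to surjectivity of $\pi_0 J(X'(U)) \to \pi_0 J(\lim_{C^{\mathrm{op}}} X')$, which is the asserted essential surjectivity of $X(U) \to \mathrm{holim}_{\phi \in C^{\mathrm{op}}} X(V)$.

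The main obstacle is making precise, and citing correctly, the cofinality statement for homotopy limits of diagrams of quasi-categories indexed by (the opposites of) ordinary categories: one must be careful that "final" in the sense of \cite[Section IX.3]{CWM} is exactly what is needed, and that it implies homotopy cofinality for the Joyal model structure and not merely for the $1$-categorical limit. The cleanest route is to bootstrap from the Kan-complex case via $J((-)^{\Delta^n})$ exactly as in Lemma~\ref{lem2.14}, so that the only external input is the classical cofinality theorem for homotopy limits of spaces; everything else is the bookkeeping already set up in Lemma~\ref{lem3.3}.
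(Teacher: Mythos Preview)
Your concrete argument is essentially the paper's proof, but you overcomplicate it and introduce a genuine error along the way. The paper's proof is two lines: once $X$ is replaced by a global injective fibrant $X'$, Mac Lane's finality says the comparison map
\[
\lim_{\phi \in C^{\mathrm{op}}} X'(V) \longrightarrow \lim_{\phi \in R^{\mathrm{op}}} X'(V)
\]
is an \emph{isomorphism} of simplicial sets, and Lemma~\ref{lem3.3}(3) finishes. No homotopy cofinality enters at all.

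Your first paragraph, by contrast, asserts that a final functor in the sense of \cite[IX.3]{CWM} is ``in particular'' homotopy-final and hence induces equivalences on homotopy limits of spaces. This is false: Mac Lane finality only requires the comma categories $r/C$ to be nonempty and connected, whereas homotopy cofinality requires them to be weakly contractible. For instance, the inclusion of the terminal object into the parallel-pair category $\bullet \rightrightarrows \bullet$ is final in Mac Lane's sense but sends $\mathrm{holim}$ of a pair $f,g:X\rightrightarrows Y$ (a homotopy equalizer) to $Y$. Your detour through $J((-)^{\Delta^n})$ does correctly reduce the Joyal-model-structure statement to the Kan case, but the Kan statement you then invoke is simply not true under the hypothesis you have.

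Your ``concrete'' paragraph lands on the right idea---pass to $X'$ and compare strict limits---but you again cite ``the standard homotopy-limit cofinality statement'' to conclude that $\lim_{R^{\mathrm{op}}} X' \to \lim_{C^{\mathrm{op}}} X'$ is a Joyal equivalence. That citation is both wrong (as above) and unnecessary: Mac Lane IX.3 already gives an isomorphism of the strict limits, which is in particular a Joyal equivalence. If you delete every appeal to homotopy cofinality and keep only the strict-limit isomorphism plus Lemma~\ref{lem3.3}(3), you recover exactly the paper's argument.
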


\begin{proof}
The fact that the inclusion $C \subseteq R$ is a final functor implies that the induced map
$$
 (\mathrm{lim}_{\phi : V \rightarrow U, \phi \in C^{\mathrm{op}} } X(V)) \rightarrow (\mathrm{lim}_{\phi : V \rightarrow U, \phi \in R^{\mathrm{op}} } X(V))
$$  
is an isomorphism. 
Since we can assume that $X$ is global injective fibrant, the result follows from 3. of Lemma~\ref{lem3.3}. 
\end{proof}

 \begin{example}\label{exam3.6}
 Suppose that $R$ is generated by a single morphism $\phi : V \rightarrow U$. Then the functor 
 $$
S : \Delta^{\mathrm{op}} \rightarrow R, \qquad [\textbf{n}] \mapsto \tilde{C}(\phi)_{n}
$$
 is final (see the proof of \cite[Corollaire 15.7]{Simpson}), where $\tilde{C}(\phi)$ is the Cech nerve of Example~\ref{exam2.13}. Thus, the effective descent condition with respect to $R$ is equivalent to 
 $$
X(U) \rightarrow \mathrm{holim}_{n \in \Delta^{*}} X(\tilde{C}(\phi)_{n})
$$
being essentially surjective. In this case, the effective descent condition for $R$ coincides with the notion of descent with respect to the hypercover associated to $\tilde{C}(\phi)$ studied at the end of Section~\ref{sec4}.   
 
 \end{example}

\begin{lemma}\label{lem3.7}
Suppose that $C$ is a category such that:
\begin{enumerate}
\item{$C$ has $\alpha$-bounded coproducts and pullbacks.}
\item{coproducts and pullbacks commute.}
\end{enumerate}
 Then there is a Grothendieck topology on $C$ \textnormal{(}the \textbf{connected components topology}\textnormal{)}, whose covers are generated by the inclusions 

$$
\left\{ U_{i} \rightarrow \coprod_{j \in I} U_{j} \right\}_{i \in I}
$$
such that $|I| < \alpha$.
\end{lemma}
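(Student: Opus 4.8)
\textbf{Proof proposal for Lemma~\ref{lem3.7}.}

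The plan is to verify directly that the proposed collection of covering families satisfies the three Grothendieck topology axioms: it contains all isomorphisms, it is stable under pullback, and it is local (transitive). To do this cleanly it is worth first recording the precise class of morphisms we allow as covers. Say that a single morphism $f : W \to U$ is a \textbf{component inclusion} if there is an $\alpha$-bounded index set $I$, an object $U_i$ for each $i$, and an isomorphism $U \cong \coprod_{j \in I} U_j$ under which $f$ is identified with one of the coproduct inclusions $U_i \hookrightarrow \coprod_j U_j$. A covering family of $U$ will then be any family $\{ f_i : U_i \to U \}_{i \in I}$ that, up to isomorphism over $U$, is the full family of coproduct inclusions into an $\alpha$-bounded coproduct decomposition of $U$; the topology is the one generated by such families, i.e. a sieve is covering iff it contains one of these families. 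First I would observe that these families are already sieve-stable enough that "generated by" is harmless: a morphism factoring through a component inclusion of an $\alpha$-bounded decomposition is itself, after refining the decomposition, built from such inclusions, using axiom (1) (existence of $\alpha$-bounded coproducts and pullbacks) to form fibre products of decompositions.

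The three verifications then run as follows. For the identity/iso axiom: given an isomorphism $g : U' \to U$, take $I$ a singleton, so $U \cong \coprod_{i \in \{*\}} U$ trivially and $g$ is the (unique) coproduct inclusion; hence $\{g\}$ is a covering family. For pullback stability: suppose $\{U_i \to U\}_{i \in I}$ is a covering family, presented via $U \cong \coprod_{i} U_i$, and let $h : U' \to U$ be arbitrary. Using hypothesis (2), that pullbacks commute with coproducts, we get $U' \cong U' \times_U U \cong U' \times_U \coprod_i U_i \cong \coprod_i (U' \times_U U_i)$, and this exhibits $\{ U' \times_U U_i \to U'\}_{i \in I}$ as exactly the family of coproduct inclusions for an $\alpha$-bounded decomposition of $U'$ (the index set is unchanged, so it is still $\alpha$-bounded). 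Hence the pullback family is covering, which is what stability requires (one must also check the sieve-level statement, but that follows formally once the generating families pull back to generating families). For transitivity: suppose $\{U_i \to U\}_{i \in I}$ is covering, presented via $U \cong \coprod_i U_i$, and for each $i$ we have a covering family $\{U_{ij} \to U_i\}_{j \in J_i}$, presented via $U_i \cong \coprod_{j \in J_i} U_{ij}$. Substituting, $U \cong \coprod_i U_i \cong \coprod_i \coprod_{j \in J_i} U_{ij} \cong \coprod_{(i,j) \in \coprod_i J_i} U_{ij}$, where the reindexing uses associativity of coproducts. The total index set $\coprod_{i \in I} J_i$ is $\alpha$-bounded since $\alpha$ is (at least) a regular cardinal bounding $I$ and each $J_i$ — here I would make explicit the standing convention on $\alpha$, presumably that $\alpha$ is regular, so that an $\alpha$-small union of $\alpha$-small sets is $\alpha$-small. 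Thus $\{U_{ij} \to U\}$ is a covering family, and it refines the composite sieve, giving transitivity.

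The step I expect to be the main obstacle — really the only place that needs care rather than bookkeeping — is making the passage between "generating families" and "covering sieves" fully rigorous, since the three axioms are officially about sieves. Concretely, one must check: (a) that the sieve generated by one of our families is genuinely covering in a way closed under the operations above, and (b) that pulling back the generated sieve along $h$ yields a sieve containing the family $\{U' \times_U U_i \to U'\}$, not merely something comparable to it. Both follow from the universal property of coproducts together with hypothesis (2), but the argument should be spelled out, perhaps by first proving the clean statement: a sieve $S$ on $U$ is covering iff there exists an $\alpha$-bounded decomposition $U \cong \coprod_i U_i$ with every coproduct inclusion $U_i \hookrightarrow U$ lying in $S$. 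With that characterization in hand, all three axioms become the coproduct manipulations above, and the commuting of coproducts with pullbacks (hypothesis (2)) is exactly what makes pullback-stability work. A secondary small point worth a remark is that one should confirm the resulting notion does not depend on the chosen decomposition, which again is immediate from the universal property.
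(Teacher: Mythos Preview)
Your proposal is correct and follows the same approach as the paper: direct verification of the Grothendieck topology axioms, with pullback stability coming from hypothesis (2) and transitivity from associativity of coproducts. The paper's proof is a two-line sketch (``stability under pullbacks follows from condition 2; transitivity is similarly easy''), so your version is simply a more careful expansion of the same argument; your observation that $\alpha$ should be regular for the transitivity step is a genuine point the paper leaves implicit.
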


\begin{proof}
We verify the axioms of Definition 1 on \cite[pg.~110]{MM}. The stability under pullbacks axiom follows from condition 2. on $C$. The transitivity axiom is similarly easy to check.  
\end{proof}

The following is immediate from Lemma~\ref{lem3.5}: 

\begin{corollary}\label{cor3.8}
Suppose that $\mathscr{C}$ has the Grothendieck topology of \textnormal{Lemma~\ref{lem3.7}} and $X$ is a presheaf of quasi-categories on $\mathscr{C}$. Then $X$ satisfies effective descent iff 
$$
X\left(\coprod U_{\alpha}
\right) \rightarrow \sideset{}{^{h}}\prod_{\alpha \in I}  X(U_{\alpha})
$$
 is essentially surjective for each $\alpha$-bounded collection $\{ U_{\alpha} \}_{\alpha \in I}$. Here, the superscript $h$ means `homotopy product', i.e., a homotopy limit. 
\end{corollary}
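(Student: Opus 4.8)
The plan is to derive Corollary~\ref{cor3.8} directly from Lemma~\ref{lem3.5} by identifying, for the connected components topology, a convenient generating subcategory $C$ of each covering sieve. Fix an $\alpha$-bounded family $\{U_\alpha\}_{\alpha \in I}$ and set $U = \coprod_{\alpha \in I} U_\alpha$. The covering sieve $R$ associated to the generating cover $\{U_i \rightarrow U\}_{i \in I}$ of Lemma~\ref{lem3.7} is, by definition, the sieve generated by the coproduct inclusions. First I would take $C \subseteq R$ to be the full subcategory on the objects $U_i \rightarrow U$, $i \in I$ (a discrete category, since there are no morphisms in $\mathscr{C}/U$ between distinct coproduct inclusions, the pullback $U_i \times_U U_j$ being the appropriate initial-type object for $i \neq j$ by condition 2, and only identities for $i = j$). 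I then need to check that this inclusion $C \subseteq R$ is final in the sense of \cite[Section IX.3]{CWM}: for each object $(\phi : V \rightarrow U) \in R$, the comma category $C/\phi$ (equivalently, the relevant slice) is connected — in fact, using condition 2 (coproducts commute with pullbacks) together with the fact that $\phi$ factors through some $U_i$, one sees that $V \cong \coprod_i (V \times_U U_i)$ and $\phi$ restricted to the $i$-th summand lands in $U_i$, which picks out a canonical object of $C/\phi$; connectedness (indeed, having a terminal or initial object) then follows from the coproduct decomposition. This is the one genuinely topology-specific computation.

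Once finality of $C \subseteq R$ is established, Lemma~\ref{lem3.5} says that $X$ satisfies effective descent with respect to $R$ if and only if
$$
X(U) \rightarrow \mathrm{holim}_{\phi : V \rightarrow U, \phi \in C^{\mathrm{op}}} X(V)
$$
is essentially surjective. Since $C$ is the discrete category on the set $I$, the indexing diagram $C^{\mathrm{op}} \rightarrow \mathrm{sSet}$ sending $i \mapsto X(U_i)$ is just an $I$-indexed family, so its homotopy limit is the homotopy product $\prod^h_{\alpha \in I} X(U_\alpha)$, and the map in question is precisely
$$
X\left(\coprod_{\alpha \in I} U_\alpha\right) \rightarrow \prod^h_{\alpha \in I} X(U_\alpha).
$$
Here I would be slightly careful that ``homotopy limit over a discrete category'' really is the homotopy product in the Joyal model structure: this is standard, since a discrete indexing category has no higher morphisms, so the Bousfield–Kan formula collapses to the product of fibrant replacements, which is the homotopy product. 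Finally, since effective descent (Definition~\ref{def3.1}) by definition means effective descent with respect to every covering sieve of every object, and every covering sieve in the connected components topology is, up to refinement, of the above form (being generated by such families), I would conclude that $X$ satisfies effective descent if and only if the displayed map is essentially surjective for every $\alpha$-bounded family.

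The step I expect to be the main obstacle is the verification of finality of $C \subseteq R$, since it requires unwinding exactly what the covering sieve $R$ looks like and using the interaction of coproducts and pullbacks (condition 2 of Lemma~\ref{lem3.7}) to decompose an arbitrary member $\phi : V \rightarrow U$ of the sieve into summands over the $U_i$. A secondary subtlety is making sure the reduction from ``every covering sieve'' to ``sieves generated by the standard $\alpha$-bounded families'' is clean — one should note that any cover refining a standard one gives, via Lemma~\ref{lem3.3} and the transitivity already used in Lemma~\ref{lem3.7}, the same effective descent condition, or simply observe that effective descent only needs to be checked on a generating set of covers. Everything else — the identification of the homotopy limit over a discrete category with the homotopy product, and the bookkeeping of the coproduct $U = \coprod U_\alpha$ as the object ``$U$'' of Lemma~\ref{lem3.5} — is routine.
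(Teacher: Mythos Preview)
Your proposal is correct and follows exactly the paper's approach: the paper's entire proof is the single sentence ``immediate from Lemma~\ref{lem3.5}'', and you have unpacked this by taking $C$ to be the discrete subcategory on the coproduct inclusions and identifying the homotopy limit over $C^{\mathrm{op}}$ as the homotopy product. The subtleties you flag (the finality verification and the reduction from arbitrary covering sieves to the generating ones) are not addressed by the paper either.
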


\section{Descent for Presheaves of Quasi-Categories}\label{sec6}

 The purpose of this section is to study quasi-injective descent for a presheaf of quasi-categories. We will first introduce mapping space presheaves, and show that the interact well with the local Joyal model structure. 
 Our main results are Theorem~\ref{thm4.10} and Theorem~\ref{thm4.12}, which are quasi-categorical analogues of \cite[Theorems 10.2, 19.4]{Simpson}. Essentially, these theorems (in the language of Remark~\ref{rmk2.17}) say that a presheaf of quasi-categories satisfies hyperdescent iff it satisfies descent with respect to all covers and all mapping space presheaves satisfy hyperdescent.

\begin{definition}\label{def4.1}
Suppose that $X, Y$ are simplicial presheaves. We define their \textbf{join}, $X * Y$, to be the simplicial presheaf obtained by applying the usual join operation sectionwise. Suppose that $f: K \rightarrow X$ is a map of simplicial presheaves with $K$ constant. Then we can form a simplicial presheaf
$X_{/f}$
such that $X_{/f}(U) = X(U)_{/f}$. 

Suppose that $X$ is a presheaf of quasi-categories and let $x, y : * \rightarrow X$ be global sections. Then the mapping space presheaf $\mathrm{Map}_{X}(x, y) $ is defined to be the pullback
$$
\xymatrix
{
\mathrm{Map}_{X}(x, y) \ar[r] \ar[d] & X_{/y} \ar[d] \\
\, * \ar[r]_{x} & X
}
$$
\end{definition}

\begin{lemma}\label{lem4.2} Let $A, B$ be presheaves of simplicial sets. Then 
$$p^{*}L^{2}(A * B) \cong L^{2}(p^{*}L^{2}A * p^{*}L^{2}B).$$ 

\end{lemma}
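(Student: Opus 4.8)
The plan is to reduce the statement to the case where $A$ and $B$ are representables tensored with finite simplicial sets, using the fact that both sides commute with colimits in each variable, and then to exploit that $p^{*}$ and $L^{2}$ each preserve finite limits. First I would record the key formal properties: the sheafification functor $L^{2}$ is a left adjoint and preserves finite limits, and $p^{*}$ is the inverse image of a geometric morphism, hence also a left adjoint preserving finite limits; in particular $p^{*}L^{2}$ preserves finite limits and all colimits. Next I would observe from Definition~\ref{def1.1} that, sectionwise, the join $S * T$ is built out of $S$, $T$, and the products $S_{i}\times T_{j}$ via colimits — more precisely, $S * T$ is a colimit of a diagram whose entries are $\Delta^{i}\times\Delta^{j}$, $\Delta^{i}$, and $\Delta^{j}$ indexed by the simplices of $S$ and $T$ — so that the join, regarded as a functor $\mathrm{sSet}\times\mathrm{sSet}\to\mathrm{sSet}$, preserves colimits separately in each variable. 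The same is then true for the join of simplicial presheaves of Definition~\ref{def4.1}, since that is defined sectionwise and colimits of simplicial presheaves are computed sectionwise.

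The main step is then a density/Yoneda reduction. Every simplicial presheaf $A$ is a colimit of presheaves of the form $\bar{U}\times\Delta^{n}$ (its "simplices"), and likewise for $B$. Since $A * B$ preserves colimits in each variable and $p^{*}L^{2}(-)$ and $L^{2}(p^{*}L^{2}(-) * p^{*}L^{2}(-))$ likewise preserve colimits in each variable (for the latter: $p^{*}L^{2}$ preserves colimits, the join of sheaves preserves colimits in each variable by the same sectionwise argument, and $L^{2}$ preserves colimits), it suffices to construct a natural isomorphism when $A = \bar{U}\times\Delta^{m}$ and $B = \bar{V}\times\Delta^{n}$, or even — pushing the reduction one step further through the colimit presentation of the join itself — when $A$ and $B$ are each a representable $\bar{U}$. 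Concretely one checks the comparison map: there is an evident natural map $p^{*}L^{2}A * p^{*}L^{2}B \to p^{*}L^{2}(A * B)$ obtained by applying $p^{*}L^{2}$ to the unit maps $A \to p^{*}L^{2}A$, $B\to p^{*}L^{2}B$ and using functoriality of the join together with the counit/adjunction; this induces $L^{2}(p^{*}L^{2}A * p^{*}L^{2}B)\to p^{*}L^{2}(A*B)$ since the target is a sheaf. I would verify this map is an isomorphism on the generators.

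For the generators, the point is that in each simplicial degree the join is a finite colimit of finite products (the formula $(S*T)_{n} = S_{n}\cup T_{n}\cup\bigcup_{i+j=n-1}S_{i}\times T_{j}$), and finite products are finite limits, which $p^{*}L^{2}$ preserves; combined with the fact that $p^{*}L^{2}$ preserves the finite colimits appearing in this formula, one gets the isomorphism degreewise, compatibly with the simplicial structure maps of Definition~\ref{def1.1}. I expect the main obstacle to be purely bookkeeping: writing the join as an honest colimit diagram (rather than the ad hoc union-of-sets formula) so that "preserves colimits in each variable" is a clean statement, and checking that the comparison map is natural and compatible with all the face and degeneracy maps in Definition~\ref{def1.1}. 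An alternative that sidesteps some of this is to argue stalkwise when $\mathscr{C}$ has enough points and then invoke Boolean localization — but since the clean categorical argument above works in general, I would present that instead.
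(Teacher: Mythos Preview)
Your proposal is correct, but it takes a long detour around what the paper does in two lines. The paper simply observes that in each simplicial degree the join is, by Definition~\ref{def1.1}, a \emph{finite} disjoint union of \emph{finite} products, and that $p^{*}L^{2}$ preserves both finite limits and colimits (the latter landing in sheaf colimits, whence the extra $L^{2}$ on the right). That degreewise computation is exactly what you carry out in your ``for the generators'' paragraph --- but it already works for arbitrary $A,B$, so the entire density/Yoneda reduction to representables is unnecessary scaffolding. Your colimit-preservation bookkeeping (that $L^{2}(p^{*}L^{2}(-)*p^{*}L^{2}(-))$ preserves colimits in each variable) is correct but hides a small step you should make explicit: one needs $L^{2}(Z*Y)\cong L^{2}(L^{2}Z*Y)$, which again follows from the degreewise formula and left exactness of $L^{2}$.

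One genuine sloppiness: there is no ``unit map $A\to p^{*}L^{2}A$'' --- the source is a presheaf on $\mathscr{C}$ and the target is a sheaf on $\mathscr{B}$, so they live in different categories. The comparison map you want is not built from any such unit; it drops out directly from the degreewise identifications above (or, if you insist on a universal-property construction, from the fact that both sides are left adjoints in each variable evaluated on the same representables). This does not affect the validity of the argument, but the sentence as written should be rephrased.
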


\begin{proof}
Since $p^{*}L^{2}$ commutes with colimits and finite limits, we have the following:
\begin{align*}
p^{*}L^{2}(A* B) \cong L^{2} ( (p^{*}L^{2}A_{n}) \cup (p^{*}L^{2}B_{n}) \bigcup_{i + j = n-1} (p^{*}L^{2}A_{i} \times p^{*}L^{2}B_{j})) \\
 \cong L^{2}(p^{*}L^{2}A * p^{*}L^{2}B).
\end{align*}
\end{proof}

\begin{lemma}\label{lem4.3}
Let $K$ be a finite simplicial set \textnormal{(}i.e., $K$ has finitely many non-degenerate simplices\textnormal{)}, regarded as a constant simplicial presheaf. Let $X$ be a simplicial presheaf and $f: K \rightarrow X$ a map of simplicial presheaves, then 
$$
p^{*}L^{2}(X_{/ f}) \cong p^{*}L^{2}(X)_{/ p^{*}L^{2}(f)}.
$$

\end{lemma}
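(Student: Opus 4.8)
The plan is to rewrite the slice $X_{/f}$ levelwise as a finite limit of simplicial presheaves to which Lemma~\ref{lem2.5} applies, and then invoke the left-exactness of $p^{*}L^{2}$. First I would unwind Definition~\ref{def1.2}. For a simplicial set $S$ with a map $p\colon K\to S$, the set $(S_{/p})_{n}=\mathrm{hom}_{p}(\Delta^{n}\ast K,S)$ is exactly the fibre over $p$ of the restriction map $\mathrm{hom}(\Delta^{n}\ast K,S)\to\mathrm{hom}(K,S)$ along the canonical inclusion $K=\emptyset\ast K\hookrightarrow\Delta^{n}\ast K$. Writing $\underline{\mathrm{hom}}(L,Y)$ for the presheaf of sets $U\mapsto\mathrm{hom}(L,Y(U))$ — so that $\underline{\mathrm{hom}}(L,Y)=(Y^{L})_{0}$ in the paper's notation — this gives, for each $n$, a pullback square of presheaves of sets
$$
\xymatrix{
(X_{/f})_{n}\ar[r]\ar[d] & \underline{\mathrm{hom}}(\Delta^{n}\ast K,X)\ar[d]\\
\,\ast\ar[r]_{f} & \underline{\mathrm{hom}}(K,X)
}
$$
whose bottom arrow classifies $f\colon K\to X$. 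Because the inclusions $K\hookrightarrow\Delta^{n}\ast K$ are compatible with the cosimplicial structure maps $\Delta^{m}\ast K\to\Delta^{n}\ast K$ coming from $[m]\to[n]$, these squares are natural in $n$: they assemble into a pullback square of simplicial presheaves in which the bottom row is constant in the simplicial direction, and $X_{/f}$ is its levelwise — hence its honest — pullback. They are visibly natural in $U$ as well.

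Now $\Delta^{n}$ and $K$ are finite simplicial sets, so their join $\Delta^{n}\ast K$ is finite as well (by the formula of Definition~\ref{def1.1} it has only finitely many nondegenerate simplices). Hence Lemma~\ref{lem2.5}(2), evaluated in simplicial degree $0$, supplies natural isomorphisms $p^{*}L^{2}\,\underline{\mathrm{hom}}(\Delta^{n}\ast K,X)\cong\underline{\mathrm{hom}}(\Delta^{n}\ast K,p^{*}L^{2}X)$ and $p^{*}L^{2}\,\underline{\mathrm{hom}}(K,X)\cong\underline{\mathrm{hom}}(K,p^{*}L^{2}X)$, compatible with restriction and sending the bottom arrow above to the one classifying $p^{*}L^{2}(f)$. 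Since $p^{*}$ and $L^{2}$ preserve finite limits (so in particular $p^{*}L^{2}(\ast)\cong\ast$) and act levelwise on simplicial objects, applying $p^{*}L^{2}$ to the square and substituting these identifications produces precisely the square that computes $\big((p^{*}L^{2}X)_{/p^{*}L^{2}(f)}\big)_{n}$. Letting $n$ vary gives the asserted isomorphism $p^{*}L^{2}(X_{/f})\cong p^{*}L^{2}(X)_{/p^{*}L^{2}(f)}$ of simplicial sheaves.

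The only genuine content is the first step — repackaging Definition~\ref{def1.2} as a pullback square that is natural in the simplicial variable $n$, which is what promotes the levelwise isomorphisms to an isomorphism of simplicial objects — together with the observation that $\Delta^{n}\ast K$ is finite so that Lemma~\ref{lem2.5} is available; everything after that is formal from left-exactness. (If one prefers to sidestep Lemma~\ref{lem2.5}, one can instead note directly that the finite simplicial set $\Delta^{n}\ast K$ is a finite colimit of representables, so $\underline{\mathrm{hom}}(\Delta^{n}\ast K,-)$ is a finite limit of evaluation functors, with which $p^{*}L^{2}$ manifestly commutes, again naturally in $n$.)
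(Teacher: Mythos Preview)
Your proof is correct and follows essentially the same route as the paper: both express $(X_{/f})_n$ as the pullback of $\underline{\mathrm{hom}}(\Delta^{n}\ast K,X)\to\underline{\mathrm{hom}}(K,X)$ along $f$, then invoke left-exactness of $p^{*}L^{2}$ together with Lemma~\ref{lem2.5}. You are a bit more explicit about naturality in $n$ (which is needed to assemble the levelwise isomorphisms into an isomorphism of simplicial objects), but otherwise the arguments coincide.
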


\begin{proof}
The $n$-simplices of $X_{/f}$ can be described as the pullback:
$$
\xymatrix
{
(X_{/f})_{n} \ar[d] \ar[r] & \mathrm{hom}(K * \Delta^{n}, X) \ar[d] \\
\, * \ar[r]_>>>>>>>>{f} & \mathrm{hom}(K, X).
}
$$
The fact that $p^{*}L^{2}$ preserves pullbacks and Lemma~\ref{lem2.5} imply that $p^{*}L^{2}(X_{/f})_{n}$ is isomorphic to the pullback of
$$
\xymatrix
{
  & \mathrm{hom}(K * \Delta^{n}, p^{*}L^{2}X) \ar[d] \\
\, * \ar[r]_>>>>>>>>>{p^{*}L^{2}f} & \mathrm{hom}(K, p^{*}L^{2}X),
}
$$
which is just $(p^{*}L^{2}(X)_{/ p^{*}L^{2}(f)})_{n}$ .
\end{proof}

\begin{lemma}\label{lem4.4}
Suppose that $f: X \rightarrow Y$ is a local Joyal equivalence and $A$ is a simplicial presheaf. Then $X * A \rightarrow Y * A$ is a local Joyal equivalence. 
\end{lemma}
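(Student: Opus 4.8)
The plan is to reduce the statement to the sectionwise situation via Boolean localization, where joins are known to interact well with Joyal equivalences. By Lemma~\ref{lem2.9}(3) — or more precisely its proof together with Lemma~\ref{lem2.4} — it suffices to work with presheaves of quasi-categories, since $p^{*}L^{2}$ preserves and reflects local Joyal equivalences and sends presheaves of quasi-categories to presheaves of quasi-categories. So first I would replace $f: X \to Y$ by a sectionwise-equivalent map of presheaves of quasi-categories (e.g. apply $\mathcal{S}_{\mathrm{Joyal}}$ sectionwise, using that the join construction is homotopy invariant enough — or simply invoke that both sides of the desired equivalence only depend on the local Joyal homotopy type). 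The point is that $X * A \to Y * A$ is a local Joyal equivalence iff $p^{*}L^{2}(X * A) \to p^{*}L^{2}(Y * A)$ is a sectionwise Joyal equivalence, and by Lemma~\ref{lem4.2} this map is $L^{2}$ applied to $p^{*}L^{2}X * p^{*}L^{2}A \to p^{*}L^{2}Y * p^{*}L^{2}A$.

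Next I would handle the extra $L^{2}$. Since $L^{2}$ preserves and reflects local Joyal equivalences (Lemma~\ref{lem2.9}(1)), and since over the Boolean algebra $\mathscr{B}$ a map of presheaves of quasi-categories is a local Joyal equivalence iff it is sectionwise (Lemma~\ref{lem2.9}(2), after noting $p^{*}L^{2}X * p^{*}L^{2}A$ is levelwise a presheaf of quasi-categories — joins of quasi-categories are quasi-categories), it is enough to prove the \emph{sectionwise} statement: if $g: C \to D$ is a Joyal equivalence of quasi-categories (or even of arbitrary simplicial sets) and $K$ is any simplicial set, then $C * K \to D * K$ is a Joyal equivalence. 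This is where I would cite the known behaviour of the join: for a fixed simplicial set $K$, the functor $(-) * K : \mathrm{sSet} \to \mathrm{sSet}$ is a left Quillen functor for the Joyal model structure (it is the left adjoint in the slice adjunction $S \mapsto S * K$, $Z \mapsto Z_{/z}$, compatible with \cite[Proposition 1.2.2.3]{Lurie} and the treatment of joins and slices in \cite{Lurie} and \cite{Joyal-quasi-cat}), hence it preserves Joyal equivalences between cofibrant objects — and every simplicial set is cofibrant. Alternatively, one argues directly via the characterization in Theorem~\ref{thm1.6}: a computation of mapping spaces in $C * K$ (they are built from mapping spaces in $C$, mapping spaces in $K$, and a single point for the cross terms) shows $C * K \to D * K$ is fully faithful when $g$ is, and essential surjectivity is immediate since the vertices of $C * K$ are those of $C$ together with those of $K$.

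The main obstacle is the middle step: transferring from "local Joyal equivalence of presheaves" down to "Joyal equivalence in each section over $\mathscr{B}$" requires knowing that $p^{*}L^{2}X * p^{*}L^{2}A$ is a presheaf of quasi-categories, so that Lemma~\ref{lem2.9}(2) applies. I would establish this by the elementary fact that the join of two quasi-categories is a quasi-category — filling an inner horn $\Lambda^{n}_{i} \to C * K$ decomposes, according to where the vertices land, into an inner horn problem in $C$, an inner horn problem in $K$, or a problem that is solved using the canonical simplex structure of the join — and then observing this holds sectionwise hence for the presheaf. One mild subtlety: $p^{*}L^{2}X$ is a presheaf of quasi-categories by Lemma~\ref{lem2.4}, but $p^{*}L^{2}A$ is an arbitrary simplicial presheaf; however the join $C * K$ is a quasi-category as soon as $C$ is, with $K$ completely arbitrary, so this causes no trouble. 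Once all three reductions are in place the result follows, with the only genuine input being the classical fact that joining with a fixed simplicial set preserves Joyal equivalences.
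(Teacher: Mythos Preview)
Your overall approach --- reduce to presheaves of quasi-categories via $\mathcal{S}_{\mathrm{Joyal}}$, pass through $p^{*}L^{2}$ using Lemma~\ref{lem4.2}, and then invoke the sectionwise fact that the join preserves Joyal equivalences (\cite[Corollary~4.2.1.3]{Lurie}) --- is exactly the paper's.

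There is, however, one genuine error in your ``mild subtlety'' paragraph. The claim that $C * K$ is a quasi-category as soon as $C$ is, with $K$ arbitrary, is false. Take $C = \Delta^{0}$ and $K = \Lambda^{2}_{1}$: the inner horn $\Lambda^{2}_{1} \hookrightarrow \Delta^{0} * \Lambda^{2}_{1}$ sitting in the $K$-part has no filler, since any $2$-simplex of $C * K$ whose vertices all lie in $K$ must come from $K_{2}$, and $\Lambda^{2}_{1}$ has none. So your invocation of Lemma~\ref{lem2.9}(2) for $p^{*}L^{2}X * p^{*}L^{2}A$ is not justified as written.

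The paper sidesteps this by also replacing $A$ with $\mathcal{S}_{\mathrm{Joyal}}(A)$, so that all three of $X,Y,A$ are presheaves of quasi-categories; the horizontal comparison maps $X * A \to \mathcal{S}_{\mathrm{Joyal}}(X) * \mathcal{S}_{\mathrm{Joyal}}(A)$ are still sectionwise Joyal equivalences by the same \cite[4.2.1.3]{Lurie}, and now the join \emph{is} sectionwise a quasi-category. Alternatively you can bypass the issue: once $p^{*}L^{2}(f)$ is a sectionwise Joyal equivalence (Lemma~\ref{lem2.9}(3)), $p^{*}L^{2}(f) * \mathrm{id}$ is sectionwise Joyal by \cite[4.2.1.3]{Lurie}, hence local Joyal by Lemma~\ref{lem2.9}(4); then apply $L^{2}$, use Lemma~\ref{lem4.2}, and reflect back via Lemma~\ref{lem2.9}(1). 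This direction never needs the join to be a quasi-category.
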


\begin{proof}
Consider the commutative diagram 
$$
\xymatrix
{
X * A \ar[r] \ar[d] & \mathcal{S}_{\mathrm{Joya}l}(X) *  \mathcal{S}_{\mathrm{Joyal}}(A) \ar[d] \\
Y  * A \ar[r] & \mathcal{S}_{\mathrm{Joyal}}(Y) * \mathcal{S}_{\mathrm{Joyal}}(A)
}
$$
By \cite[Corollary 4.2.1.3]{Lurie}, the horizontal maps are sectionwise Joyal equivalences, and hence local Joyal equivalences by Lemma~\ref{lem2.9}. 
Thus, by the two out of three property, it suffices to show that $\mathcal{S}_{\mathrm{Joyal}}(X) * \mathcal{S}_{\mathrm{Joyal}}(A) \rightarrow \mathcal{S}_{\mathrm{Joyal}}(Y) * \mathcal{S}_{\mathrm{Joyal}}(A)$ is a local Joyal equivalence. We have reduced to the case that $A, X, Y$ are presheaves of quasi-categories. By \cite[Corollary 4.2.1.3]{Lurie} and 3. of Lemma~\ref{lem2.9}, the map
$$
p^{*}L^{2}(f) * \mathrm{id} : p^{*}L^{2} X * p^{*}L^{2} A \rightarrow p^{*}L^{2}Y * p^{*}L^{2}A
$$
is a sectionwise Joyal equivalence. Thus $L^{2}(p^{*}L^{2}(f) * \mathrm{id}) \cong p^{*}L^{2}(f * \mathrm{id})$ is a local Joyal equivalence by 1. of Lemma~\ref{lem2.9}.
The result follows from that fact that $p^{*}L^{2}$ reflects local Joyal equivalences (Lemma~\ref{lem2.9}).
\end{proof}

\begin{lemma}\label{lem4.5}
Suppose that $f: X \rightarrow Y$ is a presheaf of quasi-categories. Let $K$ be a finite simplicial set, regarded as a constant simplicial presheaf, and let $s : K \rightarrow X$ be a map. Then $X_{/s} \rightarrow Y_{/f \circ s}$ is a local Joyal equivalence. 
\end{lemma}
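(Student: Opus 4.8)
Reading the hypothesis as intended — that $f : X \to Y$ is a local Joyal equivalence of presheaves of quasi-categories (taking $K = \emptyset$ gives $X_{/s} = X$ and $Y_{/f\circ s} = Y$, so the conclusion forces this) — the plan is to transport the problem through the Boolean localization $p^{*}L^{2}$, exactly as in the proof of Lemma~\ref{lem4.4}, reducing it to the classical invariance of slice quasi-categories under Joyal equivalence. Since $p^{*}L^{2}$ reflects local Joyal equivalences (Lemma~\ref{lem2.9}(1)), it suffices to prove that $p^{*}L^{2}(X_{/s}) \to p^{*}L^{2}(Y_{/f\circ s})$ is a local Joyal equivalence. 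As $K$ is finite, Lemma~\ref{lem4.3} — whose isomorphism is natural, as its proof shows — identifies this map with
$$
p^{*}L^{2}(X)_{/\, p^{*}L^{2}(s)} \longrightarrow p^{*}L^{2}(Y)_{/\, p^{*}L^{2}(f)\circ p^{*}L^{2}(s)}.
$$

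Now $p^{*}L^{2}(X)$ and $p^{*}L^{2}(Y)$ are presheaves of quasi-categories on $\mathscr{B}$ (Lemma~\ref{lem2.4}), $p^{*}L^{2}(f)$ is a sectionwise Joyal equivalence (Lemma~\ref{lem2.9}(3)), and sectionwise Joyal equivalences are local Joyal equivalences (Lemma~\ref{lem2.9}(4)); hence it is enough to check that the displayed map is a \emph{sectionwise} Joyal equivalence. Since the slice construction of Definition~\ref{def4.1} commutes with evaluation at an object of $\mathscr{B}$, this is exactly the assertion that, for a Joyal equivalence $g : \mathcal{C} \to \mathcal{D}$ of quasi-categories and an arbitrary map $t : K' \to \mathcal{C}$, the induced map $\mathcal{C}_{/t} \to \mathcal{D}_{/g\circ t}$ is a Joyal equivalence.

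For this final assertion I would appeal to the homotopy-invariance of slice quasi-categories under categorical equivalence (see \cite{Lurie}); alternatively it can be proved from Theorem~\ref{thm1.6} by noting that $\mathcal{C}_{/t} \to \mathcal{C}$ and $\mathcal{D}_{/g\circ t} \to \mathcal{D}$ are right fibrations, that the comparison $\mathcal{C}_{/t} \to \mathcal{D}_{/g\circ t}\times_{\mathcal{D}}\mathcal{C}$ is a fibrewise equivalence of right fibrations over $\mathcal{C}$ (its fibres being mapping spaces in $\mathcal{C}^{K'}$ and $\mathcal{D}^{K'}$, which the equivalence $g$ matches up), and that $\mathcal{D}_{/g\circ t}\times_{\mathcal{D}}\mathcal{C} \to \mathcal{D}_{/g\circ t}$ is a base change of $g$ along the right fibration $\mathcal{D}_{/g\circ t}\to\mathcal{D}$. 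The genuinely nontrivial input is this classical fact about quasi-categorical slices; the transport through $p^{*}L^{2}$ via Lemmas~\ref{lem2.4}, \ref{lem2.9} and \ref{lem4.3} is routine and parallels Lemma~\ref{lem4.4}, so I expect the fibrewise mapping-space bookkeeping in that last step to be the main obstacle to a fully self-contained argument.
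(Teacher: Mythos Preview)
Your proposal is correct and follows essentially the same route as the paper: transport through $p^{*}L^{2}$ via Lemmas~\ref{lem2.9} and \ref{lem4.3} to reduce to the sectionwise statement, then invoke the classical invariance of quasi-categorical slices under Joyal equivalence (the paper cites \cite[pg.~241]{Lurie} for this rather than giving your fibrewise argument). You also correctly supply the missing hypothesis that $f$ be a local Joyal equivalence, which is omitted in the paper's statement but used in its proof.
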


\begin{proof}
Since $p^{*}L^{2}$ preserves and reflects local Joyal equivalences, it suffices to show that $p^{*}L^{2}(X_{/s}) \rightarrow p^{*}L^{2}(Y_{/f \circ s})$ is a sectionwise Joyal equivalence. But this is naturally isomorphic to the map 
$$
p^{*}L^{2}(X)_{/p^{*}L^{2}(s)} \rightarrow  p^{*}L^{2}(X)_{/p^{*}L^{2}(f) \circ p^{*}L^{2}(s)}.
$$
$p^{*}L^{2}(f)$ is a sectionwise Joyal equivalence (see Lemma~\ref{lem2.9}). The map in the preceding equation is then a sectionwise Joyal equivalence by the discussion of \cite[pg. 241]{Lurie}. 
\end{proof}

\begin{corollary}\label{cor4.6}
Let $X$ is quasi-injective fibrant simplicial presheaf. Then  $ \forall \, x, y \in X(U), U \in \mathrm{Ob}(\mathscr{C})$,  $\mathrm{Map}_{X|_{U}}(x, y)$ satisfies injective descent.
\end{corollary}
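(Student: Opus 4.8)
The plan is to reduce the statement about the mapping space presheaf $\mathrm{Map}_{X|_U}(x,y)$ to a statement about $X|_U$ itself, using the characterization of quasi-injective descent in terms of injective descent for the presheaves $J((-)^{\Delta^n})$ (Theorem~\ref{thm2.11}), together with the fact that slice presheaves and joins interact well with the functor $p^*L^2$ (Lemmas~\ref{lem4.2}, \ref{lem4.3}, \ref{lem4.5}). First I would observe that $X|_U$ is quasi-injective fibrant on $\mathscr{C}/U$: the restriction functor along $(\mathscr{C}/U)^{\mathrm{op}} \to \mathscr{C}^{\mathrm{op}}$ has a left adjoint (left Kan extension), and this adjunction is Quillen for the local Joyal model structures, so it preserves quasi-injective fibrant objects. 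Hence $X|_U$ satisfies quasi-injective descent. By Theorem~\ref{thm2.11}, it suffices to show that $J(\mathrm{Map}_{X|_U}(x,y)^{\Delta^n})$ satisfies injective descent for each $n$; but since $\mathrm{Map}_{X|_U}(x,y)$ is already a presheaf of Kan complexes (mapping spaces in a quasi-category are Kan complexes, applied sectionwise), the last clause of Theorem~\ref{thm2.11} says this is the same as showing $\mathrm{Map}_{X|_U}(x,y)$ satisfies injective descent directly.

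Next I would use Lemma~\ref{lem2.9}(3): a map of presheaves of Kan complexes is a local weak equivalence iff $p^*L^2$ of it is a sectionwise weak equivalence. So the whole problem reduces to understanding $p^*L^2(\mathrm{Map}_{X|_U}(x,y))$. The mapping space presheaf is defined as a pullback of $* \xrightarrow{x} X|_U \xleftarrow{q} (X|_U)_{/y}$; since $p^*L^2$ preserves finite limits, $p^*L^2(\mathrm{Map}_{X|_U}(x,y))$ is the pullback of $* \to p^*L^2(X|_U) \leftarrow p^*L^2((X|_U)_{/y})$, and by Lemma~\ref{lem4.3} (with $K = \Delta^0$) this is $p^*L^2(X|_U)_{/p^*L^2(y)}$. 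Therefore $p^*L^2(\mathrm{Map}_{X|_U}(x,y)) \cong \mathrm{Map}_{p^*L^2(X|_U)}(p^*L^2 x, p^*L^2 y)$, computed sectionwise on $\mathscr{B}$. Now $p^*L^2(X|_U)$ is a presheaf of quasi-categories on $\mathscr{B}$ that, by the restriction-to-$U$ version of Lemma~\ref{lem2.9}(3) applied to $X \to \mathcal{L}_{\mathrm{Joyal}}(X)$, satisfies sectionwise Joyal descent on the Boolean site; i.e.\ it is sectionwise Joyal equivalent to a quasi-injective fibrant object, which on a complete Boolean algebra means sectionwise Joyal equivalent over each covering. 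The key point is then that forming mapping spaces commutes with homotopy limits of quasi-categories: given a Čech-type cosimplicial diagram $n \mapsto X(V_n)$ with $X(U) \to \mathrm{holim}_n X(V_n)$ a Joyal equivalence, one gets $\mathrm{Map}_{X(U)}(x,y) \to \mathrm{holim}_n \mathrm{Map}_{X(V_n)}(x,y)$ a weak equivalence, since $(-)^{\Delta^1}$ and slicing are right Quillen (Lemma~\ref{lem4.5} and \cite[Corollary 2.2.5.4]{Lurie}) and mapping spaces are built from these by finite limits that are compatible with the relevant homotopy limits.

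Concretely, I would run the argument through Corollary~\ref{cor2.16}: injective descent for a presheaf of Kan complexes is equivalent to descent with respect to every hypercover $V \to \bar{W}$, $W \in \mathrm{Ob}(\mathscr{C}/U)$. So I must show $\mathrm{Map}_{X|_U}(x,y)(W) \to \mathrm{holim}_{n} \mathrm{Map}_{X|_U}(x,y)(V_n)$ is a weak equivalence. Since $X$ satisfies quasi-injective descent and hence (Corollary~\ref{cor2.16}, Lemma~\ref{lem2.14}) $X(W) \to \mathrm{holim}_n X(V_n)$ is a Joyal equivalence of quasi-categories, and likewise after slicing over $y$ using Lemma~\ref{lem4.5} together with the fact that $(X|_U)_{/y}$ inherits descent, I take the homotopy pullback of $* \to X(W) \leftarrow (X(W))_{/y}$ against the corresponding cosimplicial diagram; homotopy limits commute with homotopy limits, giving the claim. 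The main obstacle I anticipate is the bookkeeping needed to verify that the slice presheaf $(X|_U)_{/y}$ satisfies descent (not merely that individual slice maps are local Joyal equivalences), and relatedly that the homotopy pullback defining the mapping space can be interchanged with the $\mathrm{holim}$ over the Čech/hypercover diagram — this requires knowing the relevant maps are fibrations (or passing to fibrant models) so that the strict pullback computes the homotopy pullback, which is exactly where quasi-injective fibrancy of $X$ and the stability results of Lemmas~\ref{lem4.3}--\ref{lem4.5} are doing the real work.
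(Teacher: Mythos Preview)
Your approach is genuinely different from the paper's, and considerably more roundabout. The paper does not touch hypercovers, Boolean localization of the mapping space, or commutation of $\mathrm{Map}$ with $\mathrm{holim}$ at all. Instead it proves something stronger: $\mathrm{Map}_{X|_U}(x,y)$ is itself \emph{quasi-injective fibrant}. The argument is a one-line lifting argument using the join/slice adjunction. A lifting problem
\[
\xymatrix{ A \ar[r] \ar[d] & \mathrm{Map}_{X|_U}(x,y) \\ B \ar@{.>}[ur] & }
\]
with $A \hookrightarrow B$ a trivial cofibration for the local Joyal structure is, by the defining adjunction $\mathrm{hom}_{p}(-*\Delta^0, X) \cong \mathrm{hom}(-, X_{/y})$, equivalent to a lifting problem
\[
\xymatrix{ (A*\Delta^0)\coprod_A B \ar[r] \ar[d] & X \\ B*\Delta^0 \ar@{.>}[ur] & }
\]
subject to the constraints that $B$ goes through $x$ and $\Delta^0$ through $y$. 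The left vertical map is a monomorphism and, by Lemma~\ref{lem4.4}, a local Joyal equivalence; since $X$ is quasi-injective fibrant the lift exists. Finally, $\mathrm{Map}_{X|_U}(x,y)$ is sectionwise a Kan complex, so $J$ is the identity on it and quasi-injective fibrancy gives injective descent by Theorem~\ref{thm2.11}.

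Your route can probably be completed --- mapping spaces in quasi-categories do commute with homotopy limits, essentially because $(-)_{/y}$ is a right adjoint and the projection is a right fibration --- but the obstacle you flag (that $(X|_U)_{/y}$ inherits descent and that the strict pullback models the homotopy pullback uniformly over the cosimplicial diagram) is real bookkeeping that the paper sidesteps entirely. The Boolean-localization computation in your second paragraph also does not feed into your hyperdescent argument in the third; it is a detour. The moral is that Lemma~\ref{lem4.4} is the sharp tool here: once you know joins preserve local Joyal equivalences, fibrancy of the mapping space is immediate from fibrancy of $X$, with no passage through Corollary~\ref{cor2.16}.
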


\begin{proof}
First, we will show that $\mathrm{Map}_{X|_{U}}(x, y)$ is quasi-injective fibrant. 
One wants to solve a lifting problem
$$
\xymatrix{
A \ar[r] \ar[d] & \mathrm{Map}_{X|_{U}}(x, y) \\
B   \ar@{.>}[ur]& 
}
$$
where $A \rightarrow B$ is a local Joyal equivalence and a monomorphism.
By adjunction, this is equivalent to a lifting problem
$$
\xymatrix
{
(A * \Delta^{0}) \coprod_{A} B \ar[d] \ar[r]^>>>>>>{m} & X \\
B * \Delta^{0} \ar@{.>}[ur]
}
$$
where $m|_{B}$ factors through the inclusion $*|_{U} \xrightarrow{x} X$ and $\Delta^{0}$ factors through $y$. But the vertical map is a local Joyal equivalence and a monomorphism, so the lift exists.  

Now, $\mathrm{Map}_{X|_{U}}(x, y)$ satisfies quasi-injective descent.  Because the mapping spaces in a quasi-category are Kan complexes, $J\mathrm{Map}_{X|_{U}}(x, y) = \mathrm{Map}_{X|_{U}}(x, y)$ and $\mathrm{Map}_{X|_{U}}(x, y)$  satisfies injective descent by Theorem~\ref{thm2.11}. 
\end{proof}

\begin{lemma}\label{lem4.7}
Suppose that $X$ is a presheaf of quasi-categories and let $i: X \rightarrow \mathcal{L}_{\mathrm{Joyal}}(X)$ be a quasi-injective fibrant replacement. Then $i$ is fully faithful in each section iff $\forall \, U \in \mathrm{Ob}(\mathscr{C}), x, y \in X(U)$, $\mathrm{Map}_{X|_{U}}(x, y)$ satisfies injective descent.
\end{lemma}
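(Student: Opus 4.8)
The plan is to prove both directions by examining how the quasi-injective fibrant replacement $i: X \to \mathcal{L}_{\mathrm{Joyal}}(X)$ interacts with mapping space presheaves. The key observation is that by Corollary~\ref{cor4.6}, since $\mathcal{L}_{\mathrm{Joyal}}(X)$ is quasi-injective fibrant, $\mathrm{Map}_{\mathcal{L}_{\mathrm{Joyal}}(X)|_U}(i(x), i(y))$ satisfies injective descent for all $U$ and all $x, y \in X(U)$. So the whole question reduces to understanding when the comparison map
$$
\mathrm{Map}_{X|_U}(x,y) \longrightarrow \mathrm{Map}_{\mathcal{L}_{\mathrm{Joyal}}(X)|_U}(i(x), i(y))
$$
is a \emph{sectionwise} weak equivalence (equivalently, since injective descent is invariant under sectionwise weak equivalence, when $\mathrm{Map}_{X|_U}(x,y)$ satisfies injective descent). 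First I would observe that $i$ being fully faithful in each section $V \to U$ means precisely that $\mathrm{Map}_{X(V)}(x|_V, y|_V) \to \mathrm{Map}_{\mathcal{L}_{\mathrm{Joyal}}(X)(V)}(i(x)|_V, i(y)|_V)$ is a weak equivalence of Kan complexes for all $V$ over $U$, i.e., that this comparison map of presheaves is a sectionwise weak equivalence. This identification of ``fully faithful in each section over $U$'' with ``the mapping-space comparison map is a sectionwise weak equivalence over $U$'' is the conceptual core.

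For the forward direction ($i$ fully faithful in each section $\Rightarrow$ injective descent for $\mathrm{Map}_{X|_U}(x,y)$): if the comparison map above is a sectionwise weak equivalence, then since $\mathrm{Map}_{\mathcal{L}_{\mathrm{Joyal}}(X)|_U}(i(x),i(y))$ satisfies injective descent by Corollary~\ref{cor4.6}, and injective descent is preserved under sectionwise weak equivalence of presheaves of Kan complexes (being the condition that $X \to \mathcal{L}_{\mathrm{inj}}(X)$ is a sectionwise weak equivalence, which is stable under sectionwise weak equivalence), it follows that $\mathrm{Map}_{X|_U}(x,y)$ satisfies injective descent. For the converse, suppose $\mathrm{Map}_{X|_U}(x,y)$ satisfies injective descent for all $U, x, y$. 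I would show the comparison map is a local weak equivalence and then, since both source and target satisfy injective descent, conclude it is a sectionwise weak equivalence (two presheaves of Kan complexes satisfying injective descent connected by a local weak equivalence are connected by a sectionwise weak equivalence, since both map by a sectionwise equivalence to a common injective fibrant replacement). To see the comparison map is a local weak equivalence, I would use Lemma~\ref{lem2.9} and Lemma~\ref{lem2.5}, Lemma~\ref{lem4.3}: after applying $p^*L^2$, the map $X \to \mathcal{L}_{\mathrm{Joyal}}(X)$ becomes a sectionwise Joyal equivalence of presheaves of quasi-categories on $\mathscr{B}$, and $p^*L^2$ commutes with the mapping space construction (using that $p^*L^2$ preserves pullbacks, finite limits, and the slice/join constructions as in Lemmas~\ref{lem4.2} and~\ref{lem4.3}); by Theorem~\ref{thm1.6} a sectionwise Joyal equivalence of presheaves of quasi-categories is sectionwise fully faithful, so it induces sectionwise weak equivalences on mapping space presheaves, hence $p^*L^2$ of the comparison map is a sectionwise weak equivalence, which by Lemma~\ref{lem2.9}(3) means the comparison map is a local weak equivalence.

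The main obstacle I anticipate is the bookkeeping around restriction to slices: one must be careful that $(\mathcal{L}_{\mathrm{Joyal}}(X))|_U$ is a quasi-injective fibrant model of $X|_U$ on the site $\mathscr{C}/U$ (so that Corollary~\ref{cor4.6} genuinely applies over $U$), and that ``fully faithful in each section'' as stated refers to sections over all $V$, including those mapping to $U$. A second, more subtle point is the claim that the mapping space presheaf construction commutes with $p^*L^2$: this is where Lemma~\ref{lem4.3} (applied with $K = \Delta^0$ to handle $X_{/y}$) and the preservation of the defining pullback square by $p^*L^2$ are essential, and one should note that $\mathrm{Map}_{X|_U}(x,y)$ is already a presheaf of Kan complexes since mapping spaces in quasi-categories are Kan complexes, so $J$ may be dropped and Theorem~\ref{thm2.11} applied directly. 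Modulo these checks, the argument is a clean two-out-of-three style comparison through the injective-fibrant model.
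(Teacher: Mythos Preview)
Your proof is correct and shares the paper's overall strategy: both reduce to showing that the comparison map
\[
\mathrm{Map}_{X|_U}(x,y) \longrightarrow \mathrm{Map}_{\mathcal{L}_{\mathrm{Joyal}}(X)|_U}(i(x), i(y))
\]
is always a \emph{local} weak equivalence, invoke Corollary~\ref{cor4.6} for the target, and then observe that a local weak equivalence into a presheaf satisfying injective descent is sectionwise exactly when the source satisfies injective descent.

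Where you diverge is in establishing that this comparison map is a local weak equivalence. You argue directly via Boolean localization: apply $p^{*}L^{2}$ to the pullback defining the mapping space, use Lemma~\ref{lem4.3} and preservation of finite limits to commute $p^{*}L^{2}$ past the construction, and note that $p^{*}L^{2}(i)$ is a sectionwise Joyal equivalence (Lemma~\ref{lem2.9}) and hence sectionwise fully faithful by Theorem~\ref{thm1.6}. The paper instead applies $J$ to the defining pullback squares, uses that the slice projection is a sectionwise right fibration and that $J$ carries quasi-fibrations to Kan fibrations, so both rows are homotopy cartesian for the Jardine model structure; it then invokes Lemma~\ref{lem4.5} for the slice map together with right properness and \cite[Lemma~II.8.19]{GJ2} to transport the local weak equivalence to the fibres. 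Your route is shorter and avoids the homotopy-pullback machinery entirely; the paper's route reuses Lemma~\ref{lem4.5} rather than re-deriving the Boolean-localization compatibility for mapping spaces from scratch. The obstacles you flag (restriction to $\mathscr{C}/U$, and commutation of $p^{*}L^{2}$ with the mapping-space pullback) are precisely the checks needed, and they go through as you indicate.
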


\begin{proof}
Let $x, y \in X(U)$.
Note that we have a diagram
$$
\xymatrix
{
J((X|_{U})_{/y}) \ar[r] \ar[d] & J(X|_{U}) \ar[r] \ar[d] & \ar[l]_{x} * \ar[d] \\
J((\mathcal{L}_{\mathrm{Joyal}}(X)|_{U})_{/i(y)}) \ar[r] & J(\mathcal{L}_{\mathrm{Joyal}}(X)|_{U}) & \ar[l]^>>>>>>{i(x)} * 
}
$$
The left vertical map is a local weak equivalence by Lemma~\ref{lem4.5} and the fact that $J$ sends local Joyal equivalences to local weak equivalences (\cite[Lemma 4.5]{Nick2}). 
The left horizontal maps are obtained by applying $J$ to a sectionwise right fibration (see \cite[Corollary 2.1.2.2]{Lurie}).
Since right fibrations of quasi-categories are quasi-fibrations (\cite[Proposition 4.10]{Joyal-quasi-cat}) and $J$ sends quasi-fibrations to Kan fibrations (\cite[Proposition 4.27]{Joyal-quasi-cat}), the left horizontal maps in the diagrams are sectionwise Kan fibrations. The pullbacks of each of the two rows are thus both homotopy cartesian diagrams for the Jardine model structure by \cite[Lemma 5.20]{local}.
The properness of the Jardine model structure and \cite[Lemma II.8.19]{GJ2} imply that we have a local weak equivalence
 $$J\mathrm{Map}_{X|_{U}}(x, y) \rightarrow J\mathrm{Map}_{\mathcal{L}_{\mathrm{Joyal}}(X)|_{U}}(i(x), i(y)) $$ 
The functor $J$ is the identity on Kan complexes, so we have a local weak equivalence
$$
\mathrm{Map}_{X|_{U}}(x, y) \rightarrow \mathrm{Map}_{\mathcal{L}_{\mathrm{Joyal}}(X)|_{U}}(i(x), i(y)).
$$
The object $\mathrm{Map}_{\mathcal{L}_{\mathrm{Joyal}}(X)|_{U}}(i(x), i(y))$ satisfies injective descent by Corollary~\ref{cor4.6}. Thus, $\mathrm{Map}_{X|_{U}}(x, y) \rightarrow \mathrm{Map}_{\mathcal{L}_{\mathrm{Joyal}}(X)|_{U}}(i(x), i(y))$ is a sectionwise weak equivalence iff $\mathrm{Map}_{X|_{U}}(x, y)$ satisfies injective descent. 
\end{proof}

\begin{lemma}\label{lem4.8}

The fibrant replacement $i: X \rightarrow \mathcal{L}_{\mathrm{Joyal}}(X)$ is essentially surjective in sections iff $J(X) \rightarrow \mathcal{L}_{\mathrm{inj}}(JX)$ is essentially surjective in sections. 
\end{lemma}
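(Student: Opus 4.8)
The plan is to relate essential surjectivity of $i : X \to \mathcal{L}_{\mathrm{Joyal}}(X)$ in sections to that of $J(X) \to \mathcal{L}_{\mathrm{inj}}(JX)$ in sections via Lemma~\ref{lem1.5}, which says essential surjectivity of a map of quasi-categories is detected by $\pi_0 J$. First I would observe that $\mathcal{L}_{\mathrm{Joyal}}(X)$ is quasi-injective fibrant, so by Theorem~\ref{thm2.11} the presheaf $J(\mathcal{L}_{\mathrm{Joyal}}(X))$ satisfies injective descent; since $J$ of a quasi-injective fibrant object is a presheaf of Kan complexes satisfying injective descent, it is Jardine fibrant (Theorem~\ref{thm2.15}, noting it is already globally injective fibrant). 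Moreover $J$ sends local Joyal equivalences to local weak equivalences (\cite[Lemma 4.5]{Nick2}), so $J(i) : J(X) \to J(\mathcal{L}_{\mathrm{Joyal}}(X))$ is a local weak equivalence with Jardine-fibrant target. Hence $J(\mathcal{L}_{\mathrm{Joyal}}(X))$ serves as a model for $\mathcal{L}_{\mathrm{inj}}(JX)$, and the canonical map $J(X) \to \mathcal{L}_{\mathrm{inj}}(JX)$ can be identified, up to a sectionwise weak equivalence of the target, with $J(i)$.

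Next I would carry out the sectionwise comparison. Fix $U \in \mathrm{Ob}(\mathscr{C})$. By Lemma~\ref{lem1.5}, $i_U : X(U) \to \mathcal{L}_{\mathrm{Joyal}}(X)(U)$ is essentially surjective iff $\pi_0 J(i_U) = \pi_0(J(i)_U)$ is surjective. By the identification of the previous paragraph, $J(i)_U$ is (weakly equivalent over a sectionwise weak equivalence to) the map $J(X)(U) \to \mathcal{L}_{\mathrm{inj}}(JX)(U)$; since $\pi_0$ is a homotopy invariant of Kan complexes and all objects here are presheaves of Kan complexes, surjectivity of $\pi_0 J(i)_U$ is equivalent to surjectivity of $\pi_0$ of this latter map — which is precisely the statement that $J(X) \to \mathcal{L}_{\mathrm{inj}}(JX)$ is essentially surjective in section $U$ (for presheaves of Kan complexes, ``essentially surjective in sections'' just means $\pi_0$-surjective in each section, which is the relevant notion of essential surjectivity here). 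Running this over all $U$ gives the equivalence.

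The main obstacle, and the point needing care, is making rigorous the claim that $J(\mathcal{L}_{\mathrm{Joyal}}(X))$ ``is'' $\mathcal{L}_{\mathrm{inj}}(JX)$ — i.e.\ that a Jardine-fibrant presheaf receiving a local weak equivalence from $J(X)$ computes the injective fibrant replacement up to sectionwise weak equivalence. This follows from the standard fact that any local weak equivalence between Jardine-fibrant objects is a sectionwise weak equivalence together with the universal property of fibrant replacement, but one must first confirm $J(\mathcal{L}_{\mathrm{Joyal}}(X))$ is genuinely Jardine fibrant; this uses Theorem~\ref{thm2.15}, for which one needs both global injective fibrancy of $J(\mathcal{L}_{\mathrm{Joyal}}(X))$ (clear, as $J$ of a sectionwise Kan complex presheaf that is injective fibrant is injective fibrant) and descent along all hypercovers (from Theorem~\ref{thm2.11} applied to the quasi-injective fibrant $\mathcal{L}_{\mathrm{Joyal}}(X)$, via Corollary~\ref{cor2.16}). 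Once this bookkeeping is in place, everything reduces to Lemma~\ref{lem1.5} and the homotopy invariance of $\pi_0$.
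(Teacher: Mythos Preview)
Your overall strategy is the same as the paper's: reduce via Lemma~\ref{lem1.5} to comparing $\pi_0 J(i)$ with $\pi_0$ of $J(X)\to\mathcal{L}_{\mathrm{inj}}(JX)$, and do this by linking $J(\mathcal{L}_{\mathrm{Joyal}}(X))$ to $\mathcal{L}_{\mathrm{inj}}(JX)$ through a sectionwise weak equivalence. The difference lies in how that link is forged, and your version has a gap there.

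You assert that $J(\mathcal{L}_{\mathrm{Joyal}}(X))$ is Jardine fibrant by invoking Theorem~\ref{thm2.15}, which needs \emph{global injective fibrancy} in addition to hyperdescent. Your justification for global injective fibrancy (``$J$ of a sectionwise Kan complex presheaf that is injective fibrant is injective fibrant'') does not apply: $\mathcal{L}_{\mathrm{Joyal}}(X)$ is a presheaf of quasi-categories, not Kan complexes, and there is no obvious reason $J$ should carry quasi-injective fibrant objects to globally injective (Kan) fibrant ones. The functor $J$ has no evident left adjoint making it right Quillen between the relevant injective model structures; the Quillen pair in play is $k_!\dashv k^!$, and one only knows $k^!\simeq J$ up to a sectionwise trivial fibration, which does not transfer fibrancy to $J$.

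The paper avoids this entirely: it never claims $J(\mathcal{L}_{\mathrm{Joyal}}(X))$ is Jardine fibrant. Instead it uses only that this object \emph{satisfies injective descent} (Theorem~\ref{thm2.11}), produces a comparison map $\phi: J(\mathcal{L}_{\mathrm{Joyal}}(X))\to \mathcal{L}_{\mathrm{inj}}(JX)$ by lifting $J(i)$ against the genuinely injective-fibrant target, observes $\phi$ is a local weak equivalence by two-out-of-three, and then invokes \cite[Corollary 5.15]{local}: a local weak equivalence whose source satisfies injective descent and whose target is injective fibrant is a sectionwise weak equivalence. Replacing your fibrancy claim with this descent-plus-lifting argument closes the gap and yields exactly the paper's proof.
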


\begin{proof}
Let $U \in \mathrm{Ob}(\mathscr{C})$. By Lemma~\ref{lem1.5}, it suffices to show that $\pi_{0}(JX(U)) \rightarrow \pi_{0}(J \mathcal{L}_{\mathrm{Joyal}}(X)(U))$ is a surjection iff $\pi_{0}(JX(U)) \rightarrow \pi_{0}(\mathcal{L}_{\mathrm{inj}}(JX)(U))$ is. 
There is a diagram: 
$$
\xymatrix
{
JX \ar[r] \ar[d] & \mathcal{L}_{\mathrm{inj}}(JX) \\
J \mathcal{L}_{\mathrm{Joyal}}(X) & 
}
$$ 
Since $J$ sends local Joyal equivalences of presheaves of quasi-categories to local weak equivalences by \cite[Lemma 4.5]{Nick2}, we can produce a lifting
$$
\xymatrix
{
J(X) \ar[r] \ar[d] & \mathcal{L}_{\mathrm{inj}}(JX) \\
J \mathcal{L}_{\mathrm{Joyal}}(X) \ar@{.>}[ur]_{\phi} & 
}
$$ 
The diagonal map is a local weak equivalence by two out of three. A weak equivalence whose source satisfies injective descent and whose target is injective fibrant is a sectionwise weak equivalence (\cite[Corollary 5.15]{local}). But $J \mathcal{L}_{\mathrm{Joyal}}(X)$ satisfies injective descent by Theorem~\ref{thm2.11}, so we conclude that $\phi$ is in fact a sectionwise weak equivalence. The required statement follows. 

\end{proof}

\begin{theorem}\label{thm4.9} Let $X$ be a presheaf of quasi-categories. Then X satisfies quasi-injective descent iff 
\begin{enumerate}
\item{For each $x, y \in X(U), U \in \mathrm{Ob}(\mathscr{C})$, $\mathrm{Map}_{X|_{U}}(x, y)$ satisfies injective descent.}
\item{$JX$ satisfies injective descent.}
\end{enumerate}
\end{theorem}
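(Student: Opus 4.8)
The plan is to reduce the theorem to a combination of Theorem~\ref{thm1.6} (the characterization of Joyal equivalences via full faithfulness and essential surjectivity) and Theorem~\ref{thm2.11} (quasi-injective descent is detected on the presheaves $J(X^{\Delta^n})$), and then to match up the two halves of that characterization with the two listed conditions using the lemmas established earlier in this section. Concretely, let $i : X \to \mathcal{L}_{\mathrm{Joyal}}(X)$ be a quasi-injective fibrant replacement. By definition, $X$ satisfies quasi-injective descent iff $i$ is a sectionwise Joyal equivalence, i.e.\ iff $i(U) : X(U) \to \mathcal{L}_{\mathrm{Joyal}}(X)(U)$ is a Joyal equivalence of quasi-categories for every $U \in \mathrm{Ob}(\mathscr{C})$. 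Both $X(U)$ and $\mathcal{L}_{\mathrm{Joyal}}(X)(U)$ are quasi-categories, so by Theorem~\ref{thm1.6} this holds iff $i(U)$ is fully faithful and essentially surjective for every $U$.

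The next step is to translate these two sectionwise conditions into the statements (1) and (2). Full faithfulness of $i$ in each section is exactly the content of Lemma~\ref{lem4.7}: it holds iff for every $U$ and every $x, y \in X(U)$, the mapping space presheaf $\mathrm{Map}_{X|_U}(x,y)$ satisfies injective descent, which is condition (1). (One should note here that the restriction $X|_U$ of a quasi-injective fibrant simplicial presheaf stays quasi-injective fibrant over $\mathscr{C}/U$, and that $\mathcal{L}_{\mathrm{Joyal}}(X)|_U$ serves as a fibrant replacement of $X|_U$, so Lemma~\ref{lem4.7} applies over each slice site.) Essential surjectivity of $i$ in each section is handled by Lemma~\ref{lem4.8}: it holds iff $J(X) \to \mathcal{L}_{\mathrm{inj}}(JX)$ is essentially surjective in each section. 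So it remains to see that, \emph{given} condition (1), essential surjectivity of $J(X) \to \mathcal{L}_{\mathrm{inj}}(JX)$ in sections is equivalent to condition (2), namely that $JX$ satisfies injective descent — i.e.\ that $JX \to \mathcal{L}_{\mathrm{inj}}(JX)$ is a \emph{sectionwise} weak equivalence, not merely $\pi_0$-surjective sectionwise.

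The main obstacle is precisely this last equivalence: upgrading a $\pi_0$-level statement about $JX$ to a full weak equivalence, and seeing where condition (1) is needed to do so. The idea is that $JX \to \mathcal{L}_{\mathrm{inj}}(JX)$ is automatically fully faithful on path components and on higher homotopy once we know the mapping spaces behave well; more precisely, the homotopy fibres of $J(X)(U) \to \mathcal{L}_{\mathrm{inj}}(JX)(U)$ over points are controlled by the mapping space presheaves $\mathrm{Map}_{X|_U}(x,x)$ (or by the $J(X^{\Delta^n})$ of Theorem~\ref{thm2.11}), whose injective descent is guaranteed by (1). So assuming (1), the map $J(X) \to \mathcal{L}_{\mathrm{inj}}(JX)$ is a sectionwise weak equivalence as soon as it is a sectionwise $\pi_0$-surjection — giving (2) — and conversely (2) trivially implies sectionwise $\pi_0$-surjectivity. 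I would phrase this via Theorem~\ref{thm2.11}: condition (1) ensures each $J(X^{\Delta^n})$ with $n \ge 1$ "already satisfies injective descent relative to the $n=0$ case", so that the only obstruction to $X$ satisfying quasi-injective descent that is not covered by (1) is the failure of $JX = J(X^{\Delta^0})$ to satisfy injective descent, which is exactly (2). Assembling: $X$ satisfies quasi-injective descent $\iff$ $i$ is sectionwise fully faithful and sectionwise essentially surjective $\iff$ (1) holds and $J(X) \to \mathcal{L}_{\mathrm{inj}}(JX)$ is sectionwise essentially surjective $\iff$ (1) and (2), where the last step uses (1) to bootstrap essential surjectivity up to a full sectionwise weak equivalence. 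The only genuinely delicate point to write carefully is this bootstrapping, and I would isolate it as the crux of the argument, leaning on Theorem~\ref{thm2.11}, Lemma~\ref{lem4.7} and Lemma~\ref{lem4.8} to keep it short.
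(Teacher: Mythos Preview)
Your overall strategy is correct and matches the paper's: reduce to Theorem~\ref{thm1.6} applied to $i : X \to \mathcal{L}_{\mathrm{Joyal}}(X)$, and identify sectionwise full faithfulness with condition~(1) via Lemma~\ref{lem4.7} and sectionwise essential surjectivity with the essential surjectivity of $J(X) \to \mathcal{L}_{\mathrm{inj}}(JX)$ via Lemma~\ref{lem4.8}.

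Where you diverge from the paper is in manufacturing an unnecessary obstacle. You insist on running the argument as a single chain of equivalences and then worry about the step ``sectionwise essential surjectivity of $J(X) \to \mathcal{L}_{\mathrm{inj}}(JX)$ $\iff$ (2)'', which you flag as the ``crux'' requiring a bootstrapping argument. The paper simply treats the two implications separately and this difficulty evaporates. For the forward direction, if $X$ satisfies quasi-injective descent then Theorem~\ref{thm2.11} (with $n=0$) immediately gives that $JX = J(X^{\Delta^{0}})$ satisfies injective descent, which is (2); no passage through Lemma~\ref{lem4.8} or any bootstrapping is needed. For the backward direction, (2) says $JX \to \mathcal{L}_{\mathrm{inj}}(JX)$ is a sectionwise weak equivalence, hence in particular sectionwise $\pi_{0}$-surjective, so Lemma~\ref{lem4.8} applies directly.

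Your proposed bootstrapping is also not cleanly argued as written. The claim that ``condition~(1) ensures each $J(X^{\Delta^{n}})$ with $n \ge 1$ already satisfies injective descent relative to the $n=0$ case'' is vague and not what actually makes the step go through. If you really wanted the iff chain, the honest way to close it is: assuming (1) and sectionwise essential surjectivity of $J(X) \to \mathcal{L}_{\mathrm{inj}}(JX)$, Lemmas~\ref{lem4.7} and~\ref{lem4.8} give that $i$ is a sectionwise Joyal equivalence, so $X$ satisfies quasi-injective descent, and \emph{then} Theorem~\ref{thm2.11} gives (2). But this is just the paper's two-direction argument rearranged, with extra indirection; drop the bootstrapping narrative and argue each implication directly.
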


\begin{proof}
 If $X$ satisfies quasi-injective descent, then these conditions hold by Theorem~\ref{thm2.11} and Lemma~\ref{lem4.7}. On the other hand, suppose the conditions above hold and let $i : X \rightarrow \mathcal{L}_{\mathrm{Joyal}}(X)$ be a fibrant replacement. $X$ satisfies quasi-injective descent iff this map is fully faithful and essentially surjective in sections (Theorem~\ref{thm1.6}). This is indeed the case by Lemmas~\ref{lem4.7} and~\ref{lem4.8}.  
  
\end{proof}

\begin{theorem}\label{thm4.10}
Let $X$ be a presheaf of quasi-categories. Then $X$ satisfies quasi-injective descent iff 
\begin{enumerate}
\item{For each $U \in \mathrm{Ob}(\mathscr{C}), x, y \in X(U)$, $\mathrm{Map}_{X|_{U}}(x, y)$ satisfies injective descent.}
\item{X satisfies the effective descent condition with respect to all coverings $R$ of objects $U$.}
\end{enumerate}
\end{theorem}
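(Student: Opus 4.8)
The plan is to deduce this from Theorem~\ref{thm4.9}, which already identifies quasi-injective descent with the conjunction of condition (1) and injective descent for $JX$. Since condition (1) is literally common to both theorems, the entire content is to show, \emph{under the standing assumption that condition (1) holds}, that $JX$ satisfies injective descent if and only if $X$ satisfies the effective descent condition with respect to all coverings $R$ of all objects $U$. By Corollary~\ref{cor2.16}, $JX$ satisfies injective descent iff $JX$ satisfies descent with respect to every hypercover, and by Lemma~\ref{lem2.14} (applied to the presheaf of quasi-categories $X$) together with Theorem~\ref{thm2.11}, this is closely tied to descent for $X$ itself. So the first move is to unwind everything into a statement purely about homotopy limits: $X$ satisfies quasi-injective descent iff $X(U)\to \mathrm{holim}_{n\in\Delta^{*}} X(V_n)$ is a Joyal equivalence for every hypercover $V\to\bar U$.

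The core of the argument is then a \textbf{fully-faithful / essentially-surjective} decomposition of that map, parallel to the proof of Theorem~\ref{thm4.9}. By Theorem~\ref{thm1.6}, $X(U)\to\mathrm{holim}_n X(V_n)$ is a Joyal equivalence iff it is fully faithful and essentially surjective. For essential surjectivity, Lemma~\ref{lem3.3} and Example~\ref{exam3.6} identify essential surjectivity of $X(U)\to\mathrm{holim}_{n\in\Delta^{*}}X(\tilde C(\phi)_n)$ with the effective descent condition for the sieve generated by a single morphism $\phi\colon V\to U$; I will need to observe that checking effective descent on \emph{all} covering sieves is equivalent to checking it on the single-morphism-generated ones, or more directly argue that essential surjectivity of the holim map over an arbitrary hypercover follows from (1) together with single-morphism effective descent — this is exactly the reduction the introduction and Section~\ref{sec5} were built to supply, so Lemma~\ref{lem3.5}/Example~\ref{exam3.6} do the heavy lifting. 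For full faithfulness, the mapping spaces of a homotopy limit are computed as the homotopy limit of the mapping spaces (because $(-)^{\Delta^1}$ and the relevant slice constructions are right Quillen for the Joyal model structure, as used in Lemma~\ref{lem2.14} and Lemma~\ref{lem4.5}), so full faithfulness of $X(U)\to\mathrm{holim}_n X(V_n)$ reduces to each $\mathrm{Map}_{X|_U}(x,y)\to \mathrm{holim}_n \mathrm{Map}_{X(V_n)}(\cdots)$ being a weak equivalence of Kan complexes — which is precisely injective descent for the mapping space presheaves, i.e.\ condition (1), invoked via Corollary~\ref{cor2.16} and Lemma~\ref{lem4.7}/Corollary~\ref{cor4.6}.

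Assembling: assume $X$ satisfies quasi-injective descent; then (1) holds by Lemma~\ref{lem4.7} (or Theorem~\ref{thm4.9}), and effective descent with respect to every cover $R$ holds because, by Example~\ref{exam3.6}, effective descent for $R$ is essential surjectivity of $X(U)\to\mathrm{holim}_{n}X(\tilde C(\phi)_n)$, which is part of the Joyal equivalence $X(U)\xrightarrow{\sim}\mathrm{holim}$ guaranteed by quasi-injective descent via Corollary~\ref{cor2.16}. Conversely, assume (1) and (2); by Theorem~\ref{thm4.9} it suffices to show $JX$ satisfies injective descent, equivalently (Corollary~\ref{cor2.16}) descent for every hypercover; full faithfulness of the holim comparison comes from (1) as above, essential surjectivity comes from (2) via the finality statement of Example~\ref{exam3.6}, and Theorem~\ref{thm1.6} packages these into the required Joyal equivalence, whence $JX$ descent follows by Lemma~\ref{lem2.14} and Theorem~\ref{thm2.11}.

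The main obstacle I anticipate is the essential-surjectivity step for a \emph{general} hypercover rather than a single-morphism Čech cover: Example~\ref{exam3.6} and Lemma~\ref{lem3.5} handle sieves generated by one morphism, but a general hypercover $V\to\bar U$ has $V_0$ a coproduct of representables, so I will need to reduce descent along an arbitrary hypercover to effective descent along the sieves generated by the components of $V_0\to\bar U$ — presumably exploiting that the mapping-space condition (1) already forces full faithfulness everywhere, so that only the $\pi_0 J$-level surjectivity remains, and that surjectivity only sees the cover up to the sieve it generates. Making that reduction precise (and making sure the "only if" direction's appeal to effective descent for \emph{all} sieves, not just single-morphism ones, is justified) is where the real care is required; everything else is a recombination of Theorems~\ref{thm1.6}, \ref{thm2.11}, \ref{thm4.9}, Corollary~\ref{cor2.16}, and Lemmas~\ref{lem2.14}, \ref{lem3.3}, \ref{lem3.5}, \ref{lem4.7}.
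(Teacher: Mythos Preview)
Your reverse direction contains a genuine gap that you yourself flag but do not close. You propose to deduce quasi-injective descent from Theorem~\ref{thm4.9} by showing that $JX$ satisfies injective descent, and via Corollary~\ref{cor2.16} this means checking that $X(U)\to\mathrm{holim}_{n\in\Delta^{*}}X(V_{n})$ is a Joyal equivalence for \emph{every} hypercover $V\to\bar U$. Full faithfulness does indeed follow from condition~(1) along the lines you sketch. But for essential surjectivity you only have condition~(2), which is effective descent with respect to covering \emph{sieves} $R$; Lemma~\ref{lem3.3} translates this into essential surjectivity of $X(U)\to\mathrm{holim}_{\phi\in R^{\mathrm{op}}}X(V)$, and Example~\ref{exam3.6} further identifies this with a cosimplicial holim \emph{only} when $R$ is generated by a single morphism. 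A general hypercover $V$ is not determined by the sieve generated by $V_{0}\to U$: the higher $V_{n}$ carry genuine extra data, and objects of $\mathrm{holim}_{n}X(V_{n})$ need not arise from objects of $\mathrm{holim}_{R^{\mathrm{op}}}X$. Your suggestion that ``surjectivity only sees the cover up to the sieve it generates'' is precisely the nontrivial step, and nothing in Section~\ref{sec5} supplies it. (Remark~\ref{rmk2.17} is a warning here: \v Cech-type data and hyperdescent genuinely differ.)

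The paper avoids this problem entirely by not passing through hypercovers. Instead it works directly with the fibrant replacement $i:X\to\mathcal{L}_{\mathrm{Joyal}}(X)$ and shows essential surjectivity in each section by a local lifting argument: factor $i$ as a sectionwise Joyal equivalence $X\to B$ followed by a global-injective fibration $g:B\to\mathcal{L}_{\mathrm{Joyal}}(X)$; then $g$ is a local trivial fibration, so for any point $a:*|_{U}\to\mathcal{L}_{\mathrm{Joyal}}(X)$ the pullback $C\to *|_{U}$ acquires sections over some covering sieve $R$. This produces a map $*|_{R}\to B$, and now condition~(2) in the precise form of Lemma~\ref{lem3.4} (lifting along $*|_{R}\hookrightarrow *|_{U}$ into the global-injective-fibrant $B$) extends it to $*|_{U}\to B$, whose image is then shown to be isomorphic to $a$. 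No hypercover ever enters.

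A smaller point: in your forward direction you invoke Example~\ref{exam3.6} to identify effective descent for an arbitrary sieve $R$ with essential surjectivity of a cosimplicial holim, but Example~\ref{exam3.6} only applies to sieves generated by a single morphism. The paper's forward argument is instead the direct one: once $X\to\mathcal{L}_{\mathrm{Joyal}}(X)$ is a sectionwise Joyal equivalence, $\mathcal{L}_{\mathrm{Joyal}}(X)$ lifts against the trivial local-Joyal cofibration $*|_{R}\to *|_{U}$, and Lemma~\ref{lem3.4} converts this into effective descent for $R$.
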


\begin{proof}
Suppose that $X$ satisfies quasi-injective descent. Then Condition (1) is satisfied by Corollary~\ref{cor4.6}. To show that the effective descent condition is satisfied, note that the map $X \rightarrow\mathcal{L}_{\mathrm{Joyal}}(X)$ is a fibrant replacement for the global injective Joyal model structure because $X$ satisfies quasi-injective descent. Given a covering $R$ of an object $U$, $*|_{R} \rightarrow *|_{U}$ is a local Joyal equivalence and we can solve lifting problems
$$
\xymatrix
{
\, *|_{R} \ar[r] \ar[d] & \mathcal{L}_{\mathrm{Joyal}}(X) \\
\, *|_{U} \ar@{.>}[ur] &
}
$$
Thus, the result follows from Remark~\ref{rmk3.2} and Lemma~\ref{lem3.4}. 

Now, assume the two conditions in the statement of the theorem hold. 
By Theorem~\ref{thm1.6}, Lemma~\ref{lem4.7} and the assumption on mapping space presheaves, it suffices to show that $X \rightarrow \mathcal{L}_{\mathrm{Joyal}}(X)$ is essentially surjective in sections.  Factor $X \rightarrow B \xrightarrow{g}  \mathcal{L}_{\mathrm{Joyal}}(X)$, where $g$ is an fibration for the global injective Joyal model structure and $X \rightarrow B$ is a sectionwise Joyal equivalence. 

It now suffices to show that $g$ is essentially surjective in sections.
Let $a :*|_{U} \rightarrow \mathcal{L}_{\mathrm{Joyal}}(X)$ be a point. Form the pullback
$$
\xymatrix
{
C \ar[r]^{l} \ar[d]_{k} & B \ar[d]^{g} \\
\, *|_{U} \ar[r]_>>>>>{a} &  \mathcal{L}_{\mathrm{Joyal}}(X)
}
$$
The map $g$ is a sectionwise quasi-fibration and a local Joyal equivalence, and hence a local trivial fibration by \cite[Lemma 3.15]{Nick}. Thus, 
$k : C \rightarrow *|_{U}$ is a local trivial fibration. Thus is has the local right lifting property (Definition~\ref{def2.1}) with respect to the map $\emptyset \rightarrow \Delta^{0}$. Thus, there exists a covering   $R$ of $U$ such that for each $\phi : V \rightarrow U$ in $R$, $C(V)$ is non-empty. We conclude from this that $k|_{R} : C|_{R} \rightarrow *|_{R}$ is surjective. 
Let $S \subseteq  \mathcal{L}_{\mathrm{Joyal}}(X)$ be the full subpresheaf of quasi-categories (i.e. in sections; see \cite[1.2.11]{Lurie})
such that 
$$
\mathrm{Ob}(S) = \mathrm{im}(\mathrm{Ob}(g)).
$$
One can factor the map $g$ as $B \xrightarrow{g'} S \subseteq \mathcal{L}_{\mathrm{Joyal}}(X)$.

The map $g'$ is a fibration for the global injective model structure, since $g'$ can be expressed as a pullback

$$
\xymatrix
{
B \ar[r]^{id_{B}} \ar[d]^{g'} & B\ar[d]^{g} \\
S \ar[r] & \mathcal{L}_{\mathrm{Joyal}}(X')
}
$$

$g'$ is essentially surjective in sections by construction. It is fully faithful in sections by \cite[Remark 1.2.2.4]{Lurie} and the fact that $g$ is fully faithful in sections. Thus, it is a sectionwise Joyal equivalence by Theorem~\ref{thm1.6}. Thus it is a trivial quasi-injective fibration, since trivial fibrations for the local Joyal and global injective Joyal model structures coincide. $k|_{R} : C|_{R} \rightarrow *|_{R}$ is isomorphic to the pullback
$$
\xymatrix
{
C|_{R} \ar[r]^{k|_{R}} \ar[d]_{l|_{R}} & \ar[d]^{a|_{R}} *|_{R} \\
B \ar[r]_{g'} & S
}
$$
so that $k|_{R} : C|_{R} \rightarrow *|_{R}$ is a trivial quasi-injective fibration. Because every object is cofibrant in the local Joyal model structure we can find a lift
$$
\xymatrix
{
\emptyset \ar[r] \ar[d] & C|_{R} \ar[d]^{k|_{R}} \\
\, *|_{R} \ar[r]_{id} \ar@{.>}[ur]_{s} & \, *|_{R}
}
$$

 That is, $k|_{R}$ has a section $s: *|_{R} \rightarrow C|_{R}$. The morphism 
$$
l |_{R} \circ s : *|_{R} \rightarrow B
$$ 
 extends to a map
$$
r: *|_{U} \rightarrow B
$$
by the effective descent hypothesis, Lemma~\ref{lem3.4} and the fact that $B$ is global quasi-injective fibrant replacement of $X$. Now, there exists a lifting 
$$\xymatrixcolsep{6pc}
\xymatrix
{
\, (*|_{U} \coprod \, *|_{U}) \coprod_{\, (*|_{R} \coprod \, *|_{R} )} (I|_{R}) \ar[r]^>>>>>>>>>>>>>>{(g \circ r, a), a|_{R} \circ t)} \ar[d] &  \mathcal{L}_{\mathrm{Joyal}}(X) \\
I|_{U} \ar@{.>}[ur] &
}
$$ 
where $t : I|_{R} \rightarrow *|_{R}$ is the terminal map in sections (the vertical map is a local Joyal equivalence).
 Thus, the image of the point $r$ is isomorphic to $a$ in $P(\mathcal{L}_{\mathrm{Joyal}}(X))$ by Theorem~\ref{thm1.0}. Since $a, U$ are arbitrary, we have proven that $g$ is essentially surjective in sections. 
\end{proof}

\begin{definition}\label{def4.11}
We say that a site $\mathscr{E}$ with fiber products has \textbf{sufficient coproducts compatible with fiber products}  if there
exists some cardinal $\alpha$ such that the following conditions hold:

\begin{enumerate}
\item{$\mathscr{E}$ has $\alpha$-bounded coproducts.}
 \item{For each covering sieve $R$, there exists a final inclusion $S \subseteq R$, such that $S$ is a covering sieve generated by an $\alpha$-bounded set of morphisms.}
 \item{Fiber products commute with $\alpha$-bounded coproducts.}
\end{enumerate}
\end{definition}

\begin{theorem}\label{thm4.12}
Assume that the underlying site $\mathscr{C}$ satisfies the hypotheses of \textnormal{Definition~\ref{def4.11}}. 
Let $X$ be a presheaf of quasi-categories such that $X(i) = *$, where $i$ is the initial object of $\mathscr{C}$. 
Suppose that $X$ satisfies the following three conditions:
\begin{enumerate}
\item{For each $U \in \mathrm{Ob}(\mathscr{C}), x, y \in X(U)$, $\mathrm{Map}_{X|_{U}}(x, y)$ satisfies injective descent.}
\item{For each morphism $\phi : V \rightarrow U$ generating a cover, $$X(U) \rightarrow \mathrm{holim}_{n \in \Delta^{*} } X(\tilde{C}(\phi)_{n})$$ is essentially surjective.}
\item{Let $\alpha$ be as in \textnormal{Definition~\ref{def4.11}}. For any $\alpha$-bounded collection of elements $\mathcal{V}$, the map induced by restriction $X(\coprod_{V \in \mathcal{V}} V) \rightarrow \prod_{V \in \mathcal{V}}^{h} X(V)$ is essentially surjective. Here, the superscript $h$ means homotopy product.}
\end{enumerate}
Then $X$ satisfies quasi-injective descent. 
\end{theorem}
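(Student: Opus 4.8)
The plan is to deduce this from \textnormal{Theorem~\ref{thm4.10}}: since condition (1) above is literally the first hypothesis of that theorem, it suffices to prove that $X$ satisfies the effective descent condition with respect to every covering sieve $R$ of every object $U$.

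First I would cut down the class of covers to consider. Given a covering sieve $R$ of $U$, \textnormal{Definition~\ref{def4.11}(2)} supplies a final inclusion $S\subseteq R$ with $S$ a covering sieve generated by an $\alpha$-bounded family $\{\phi_{i}:V_{i}\to U\}_{i\in I}$. Since $S$ generates $R$, \textnormal{Lemma~\ref{lem3.5}} (applied with $C=S$) together with \textnormal{Lemma~\ref{lem3.3}} shows that $X$ has effective descent with respect to $R$ if and only if it has effective descent with respect to $S$; so we may assume our covering sieve is generated by an $\alpha$-bounded family $\{\phi_{i}:V_{i}\to U\}_{i\in I}$. Using \textnormal{Definition~\ref{def4.11}(1)} I would form $V:=\coprod_{i\in I}V_{i}$ with its induced morphism $\phi:V\to U$. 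Each $\phi_{i}$ factors through $\phi$ via a summand inclusion, so the sieve $\langle\phi\rangle$ generated by $\phi$ contains $S$ and is therefore a covering sieve, i.e. $\phi$ generates a cover in the sense of condition (2). Moreover, since $\alpha$-bounded coproducts commute with fibre products (\textnormal{Definition~\ref{def4.11}(3)}), there are identifications $\tilde C(\phi)_{n}=V\times_{U}\cdots\times_{U}V\cong\coprod_{\vec\imath\in I^{n+1}}V_{\vec\imath}$, where $V_{\vec\imath}:=V_{i_{0}}\times_{U}\cdots\times_{U}V_{i_{n}}$, compatible with the cosimplicial structure; note each $V_{\vec\imath}\to U$ lies in $S$.

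Next I would express the two remaining conditions as homotopy limits over $\Delta$. Condition (2) applied to $\phi$, together with \textnormal{Example~\ref{exam3.6}}, says that
$$X(U)\longrightarrow\mathrm{holim}_{n\in\Delta^{*}}X(\tilde C(\phi)_{n})=\mathrm{holim}_{n\in\Delta^{*}}X\Big(\coprod_{\vec\imath\in I^{n+1}}V_{\vec\imath}\Big)$$
is essentially surjective. On the other hand, the finality argument behind \textnormal{Example~\ref{exam3.6}} (see \cite[Corollaire 15.7]{Simpson}) applies equally to the $\alpha$-bounded family $\{\phi_{i}\}$ and its presheaf-level Čech nerve, identifying effective descent of $X$ with respect to $S$ with essential surjectivity of
$$X(U)\longrightarrow\mathrm{holim}_{n\in\Delta^{*}}\prod_{\vec\imath\in I^{n+1}}^{h}X(V_{\vec\imath}).$$
Comparing these, the theorem reduces to the claim that for each $n$ the canonical comparison $\rho_{n}:X\big(\coprod_{\vec\imath\in I^{n+1}}V_{\vec\imath}\big)\to\prod_{\vec\imath\in I^{n+1}}^{h}X(V_{\vec\imath})$ is a Joyal equivalence: then $\rho_{\bullet}$ is a levelwise Joyal equivalence of cosimplicial objects, it induces a Joyal equivalence on $\mathrm{holim}$, and essential surjectivity transports from the first displayed map to the second.

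The last point is where the main work lies. Essential surjectivity of $\rho_{n}$ is precisely condition (3) for the $\alpha$-bounded collection $\{V_{\vec\imath}\}_{\vec\imath\in I^{n+1}}$, so by \textnormal{Theorem~\ref{thm1.6}} it remains to check that $\rho_{n}$ is fully faithful. On mapping spaces $\rho_{n}$ becomes the comparison $F(W)\to\prod_{\vec\imath}F(V_{\vec\imath})$ for $W:=\coprod_{\vec\imath}V_{\vec\imath}$ and $F:=\mathrm{Map}_{X|_{W}}(x,y)$, and $F$ is a presheaf of Kan complexes which satisfies injective descent by condition (1) and has $F(i)=*$ because $X(i)=*$. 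So the whole argument turns on the auxiliary statement that a presheaf of Kan complexes which satisfies injective descent and takes the value $*$ at the initial object sends the $\alpha$-bounded coproduct $\coprod_{\vec\imath}V_{\vec\imath}$ to the product $\prod_{\vec\imath}F(V_{\vec\imath})$ up to weak equivalence --- intuitively, the higher overlaps of the summand inclusions $V_{\vec\imath}\hookrightarrow W$ are initial and contribute only a point, so the Čech descent datum for them degenerates to an honest product. I expect this to be the technical heart of the proof, and the place where the hypotheses $X(i)=*$, condition (1), and the compatibility of coproducts with fibre products are all used in an essential way; granting it, $\rho_{n}$ is fully faithful, hence a Joyal equivalence, and the proof is complete.
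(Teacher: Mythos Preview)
Your reduction to Theorem~\ref{thm4.10} and the passage to an $\alpha$-bounded generating family via Definition~\ref{def4.11}(2) and Lemma~\ref{lem3.5} are correct and match the paper. But your method for actually verifying effective descent diverges from the paper's and runs into a real obstacle.

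You try to show that each levelwise comparison $\rho_{n}: X\big(\coprod_{\vec\imath} V_{\vec\imath}\big) \to \prod^{h}_{\vec\imath} X(V_{\vec\imath})$ is a Joyal \emph{equivalence}, so that essential surjectivity can be transported from one $\mathrm{holim}$ to the other. Condition~(3) gives essential surjectivity of $\rho_{n}$, but for full faithfulness you invoke an ``auxiliary statement'': any injective-descent presheaf of Kan complexes $F$ with $F(i)=*$ sends $W=\coprod V_{\vec\imath}$ to $\prod F(V_{\vec\imath})$ up to weak equivalence. This is where the argument breaks. That statement is true when coproducts in $\mathscr{C}$ are \emph{disjoint} (so that $V_{\vec\imath}\times_{W}V_{\vec\jmath}$ is initial for $\vec\imath\neq\vec\jmath$ and the \v Cech nerve of $\{V_{\vec\imath}\hookrightarrow W\}$ collapses), but disjointness of coproducts is nowhere in Definition~\ref{def4.11}. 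Without it the cross-terms $F(V_{\vec\imath}\times_{W}V_{\vec\jmath})$ genuinely contribute to the descent datum, and there is no reason for $F(W)\to\prod F(V_{\vec\imath})$ to be a weak equivalence even when $F$ satisfies hyperdescent. So the ``technical heart'' you isolate is not provable from the stated hypotheses, and you cannot conclude that $\rho_{n}$ is fully faithful.

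The paper avoids this completely by never asking $\rho_{n}$ to be an equivalence. After replacing $X$ by a global-injective-fibrant model, it uses the lifting characterization of effective descent (Lemma~\ref{lem3.4}): given a map $*|_{R}\to X$, one first lifts along the inclusion into $*|_{W}$ using condition~(3) together with Corollary~\ref{cor3.8} and Lemma~\ref{lem3.4} --- and Corollary~\ref{cor3.8} needs only essential surjectivity of $X(\coprod V)\to\prod^{h}X(V)$, nothing more --- and then lifts along $*|_{W}\to *|_{U}$ using condition~(2) together with Example~\ref{exam3.6} and Lemma~\ref{lem3.4}. Two essential-surjectivity hypotheses compose cleanly as two successive lifts; they do \emph{not} compose through a $\mathrm{holim}$ (levelwise essential surjectivity of a cosimplicial map does not yield essential surjectivity on $\mathrm{holim}$), which is exactly why your route forces you into the stronger, unavailable equivalence claim. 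Rewriting your argument in terms of Lemma~\ref{lem3.4} removes the need for the auxiliary statement altogether.
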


\begin{proof}
These conditions are invariant under sectionwise Joyal equivalence. Thus, it suffices to assume that $X$ is global injective fibrant. 
By Theorem~\ref{thm4.10}, we want to verify the effective descent condition for each covering $R$ of an object $U$. 
Let $S \subseteq R$ be final inclusion of covering sieve, such that $S$ is generated by an $\alpha$-bounded collection $\textbf{S}$. Let $W = \coprod_{(V \rightarrow U) \in \textbf{S}} V$. Let $T$ be the sieve which is generated by inclusion $\{ V \rightarrow W  :  (V \rightarrow U) \in \textbf{S} \}$. This is a covering sieve by condition (3) of Definition~\ref{def4.11}. Note also that the morphism $W \rightarrow U$ generates a cover. 

 By Lemma~\ref{lem3.4}, we want to find liftings in the diagram
 $$
\xymatrix
 {
 \, *|_{R} \ar[d] \ar[r] & X \\
 \, *|_{W} \ar@{.>}[ur] \ar[d]  & \\
 \, *|_{U} \ar@{.>}[uur] &
 }
$$ 
The first lift exists by Corollary~\ref{cor3.8}, Lemma~\ref{lem3.4} and hypothesis 3 above.  
 The second lift exists by hypothesis 2 above, Lemma~\ref{lem3.4} and Example~\ref{exam3.6}.

\end{proof}

\begin{remark}\label{rmk4.13}
The big Zariski, etale, fppf and fpqc sites of a scheme $X$ satisfy the hypotheses of Definition~\ref{def4.11}. If $X$ is quasi-compact, then we can take $\alpha = \aleph_{0}$. 
\end{remark}

\section{Descent for Presheaves of Model Categories}\label{sec7}

In this section, we will define a simplicial localization functor $\mathfrak{B}L_{H}(-^{c})$, which associates to a model category a quasi-category. Given a (left Quillen) presheaf of model categories $M$, $\mathfrak{B}L_{H}(M^{c})$ means applying $\mathfrak{B}L_{H}(-^{c})$ to $M$ in sections. We will give a concrete interpretation of the effective descent condition for $\mathfrak{B}L_{H}(M^{c})$ in Theorem~\ref{thm5.5} in terms of 'lax sections'. This follows from a result of Bergner (Theorem~\ref{thm5.3}), which calculates the homotopy limit of a diagram of simplicial localizations of model categories. 

In Theorem~\ref{thm5.8}, we give a criterion for $\mathfrak{B}L_{H}(M^{c})$ to satisfy descent in the case that $M = \textbf{M}_{0}$ for a presheaf of simplicial model categories $\textbf{M}$. This criterion makes verifying condition (1) of Theorem~\ref{thm4.12} much easier in practice (compare our treatment of the higher stack of complexes in Section~\ref{sec8} with that of \cite{Simpson}). 

Model categories are not in general small categories, which means that one has to be careful when talking about presheaves of model categories. One way of getting around this problem is to use the technique of Grothendieck universes (see the discussion of \cite[Section 1.2.15]{Lurie}). An alternative is to replace a presheaf of model categories with a subpresheaf of small subcategories. We explain in Remark~\ref{rmk5.9} how the results of this section apply more generally to appropriate choices of subpresheaves.
\\

A \textbf{cofibration category} is a category equipped with classes of cofibrations and weak equivalences, subject to various axioms (see \cite[Definition 2.1]{Kapulkin-Szumilo}). It is a weaker model for homotopy theory than a model category. A functor between cofibration categories is called \textbf{exact} iff it preserves cofibrations and trivial cofibrations.  
  
A \textbf{diagram of model categories} is a functor from a small category to the category of model categories and left Quillen functors. A \textbf{left Quillen presheaf} is a diagram of model categories on a site.

\begin{definition}\label{def5.1}
Suppose that $M : I \rightarrow \mathrm{Cat}$ is a diagram of (model) categories. Then the \textbf{lax limit} of $M$ is the category $\mathfrak{L}_{I}(M)$ such that: 
\begin{enumerate}
\item{The objects consist of families $(x_{\alpha}, u_{\alpha, \beta}^{\theta})$ such that $x_{\alpha}$ is an object of $M(\alpha)$ and $u^{\theta}_{\alpha, \beta} $ is a morphism $M(\theta)(x_{\alpha}) \rightarrow x_{\beta}$ in $M(\beta)$, subject to the condition that 
$$
u^{\delta \theta}_{\alpha, \gamma} = u^{\delta}_{\beta, \gamma} M(\delta)( u^{\theta}_{\alpha, \beta}).
$$
Here $\alpha, \beta$ run over all objects in $I$ and $\theta : \beta \rightarrow \alpha$ runs over all morphisms of $I$. 
}

\item{A morphism $(x_{\alpha}, u_{\alpha, \beta}^{\theta}) \rightarrow (y_{\alpha}, v_{\alpha, \beta}^{\theta})$ consists of a choice of morphisms $\psi_{\alpha} : x_{\alpha} \rightarrow y_{\alpha}$ for each object $\alpha$ of $I$,
such that for each $\theta : \beta \rightarrow \alpha$ that the following diagram commutes: 
$$
\xymatrix
{
M(\theta)(x_{\alpha}) \ar[r]^{M(\theta)(\psi_{\alpha})} \ar[d]_{u_{\alpha, \beta}^{\theta}} & M(\theta)(y_{\alpha}) \ar[d]^{v_{\alpha, \beta}^{\theta}} \\
x_{\beta} \ar[r]_{\psi_{\beta}} & y_{\beta}
}
$$

}
\end{enumerate}

Suppose that $M$ is an diagram of model categories, indexed by $I$. 
We will write $\textbf{LAX}_{I}M$ for the subcategory of $\mathfrak{L}_{I}(M)$ such that the $u^{\delta}_{\alpha, \beta}$ are weak equivalences.
\end{definition}

As noted in \cite[pg. 6]{Model-Holims} if the model categories appearing in the diagram $M$ are sufficiently nice (i.e. cofibrantly generated), then $\mathfrak{L}_{I}(M)$ can be given the structure of a model structure in which the cofibrations and weak equivalences are levelwise. However, $\textbf{LAX}_{I}(M)$ cannot be given the structure of a model category because it is not closed under limits. This is because in general weak equivalences in a model category are not closed under limits. 

\begin{remark}\label{rmk5.2}
Let $\textbf{LAX}_{I}(M)^{c}$ denote the full subcategory of $\textbf{LAX}_{I}(M)$ consisting of objects $(x_{\alpha}, u_{\alpha, \beta}^{\theta})$ such that each $x_{\alpha}$ is cofibrant. Then $\textbf{LAX}_{I}(M)^{c}$  inherits the structure of a cofibration category from the model structure on $\mathfrak{L}_{I}(M)$.
\end{remark}

Given a model category $M$, we will write $M^{c}$ for the full subcategory of cofibrant objects and $M^{\circ}$ for the full subcategory of bifibrant objects. 

Suppose that $M$ is a model category, or more generally, a cofibration category. Then we write $L_{C}$ for the classifying diagram defined in \cite{Rezk}. $L_{C}(M)$ is the bisimplicial set given in horizontal degree n by
$$
B(\mathrm{we}((M^{c})^{[n]})).
$$  
Here, $(M^{c})^{[\textbf{n}]}$ is the category of functors $[\textbf{n}] \rightarrow M^{c}$, and  $\mathrm{we}((M^{c})^{[n]})$ is the subcategory whose morphisms are the levelwise weak equivalences.

In the rest of the paper we will assume that we postcompose $L_{C}$ with a Reedy fibrant replacement so that the resulting bisimplicial set is a complete Segal space (\cite[8.3]{Rezk}).  
\\

The following is \cite[Theorem 5.1]{Model-Holims}:

\begin{theorem}\label{thm5.3}
Suppose $M$ is a diagram of model categories.
There is a natural weak equivalence 
$$
L_{C}(\textnormal{\textbf{LAX}}_{I}(M)^{c}) \rightarrow \mathrm{holim}_{I}(L_{C}(M))
$$
 in the complete Segal model structure of (\cite{Rezk}). 
 
\end{theorem}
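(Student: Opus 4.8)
\textbf{Proof proposal for Theorem~\ref{thm5.3}.}

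The plan is to reduce the statement to a cofinality (homotopy finality) argument between two indexing categories, after which the homotopy limit over $I$ of the diagram $L_C(M)$ can be computed as the classifying diagram of an explicit category of sections. First I would recall that, since each $M(\alpha)$ is a (cofibrantly generated) model category, the lax limit $\mathfrak{L}_I(M)$ carries a model structure with levelwise cofibrations and weak equivalences (as recalled after Definition~\ref{def5.1}), so that $\textbf{LAX}_I(M)^c$ is the subcategory of cofibrant objects whose structure maps $u^\theta_{\alpha,\beta}$ are weak equivalences; by Remark~\ref{rmk5.2} this is a cofibration category, hence $L_C$ applied to it is a complete Segal space after Reedy fibrant replacement. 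The target $\mathrm{holim}_I(L_C(M))$ is a homotopy limit in the complete Segal model structure of \cite{Rezk}, which can be modelled as the homotopy limit of the $I$-diagram of complete Segal spaces $\alpha \mapsto L_C(M(\alpha))$.

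The key step is to identify this homotopy limit with an honest limit over a suitably fattened diagram. I would replace the $I$-diagram $\alpha \mapsto L_C(M(\alpha))$ by a projectively (Reedy) fibrant replacement in the category of $I$-diagrams of complete Segal spaces, so that the homotopy limit is computed as the strict limit $\mathrm{lim}_I$ of the replacement; concretely, one uses the standard cosimplicial/cobar resolution whose total object in simplicial degree zero records a choice of object $x_\alpha \in M(\alpha)$ for each $\alpha$ together with weak equivalences $M(\theta)(x_\alpha) \xrightarrow{\sim} x_\beta$ compatible under composition — exactly the data defining an object of $\textbf{LAX}_I(M)^c$ (with cofibrancy imposed to land in the cofibrant part where $L_C$ is homotopically meaningful). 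This matching of combinatorial data at each simplicial level is what produces the comparison map $L_C(\textbf{LAX}_I(M)^c) \to \mathrm{holim}_I(L_C(M))$, and naturality in $M$ is automatic from functoriality of the bar construction.

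The main obstacle, and the heart of the proof, is showing this comparison map is a weak equivalence of complete Segal spaces. I expect this to come down to two checks: first, that the classifying diagram $L_C$ of a cofibration category is invariant under exact equivalences and sends the relevant Reedy-fibrant replacements of $\textbf{LAX}_I(M)^c$ to weak equivalences — so that one may freely rectify — and second, a level-by-level cofinality argument comparing the category $\textbf{we}((\textbf{LAX}_I(M)^c)^{[\mathbf{n}]})$ of $n$-chains of levelwise weak equivalences with the homotopy limit over $I$ of the categories $\textbf{we}((M(\alpha)^c)^{[\mathbf{n}]})$, using that a homotopy limit of nerves of categories with weak equivalences is computed by the nerve of the corresponding category of homotopy-coherent sections, and that imposing cofibrancy is homotopically inessential (every object is weakly equivalent to a cofibrant one, functorially via cofibrant replacement, which is exact on the relevant subcategories). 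Assembling these across $n$ and invoking the completeness of all Segal spaces involved — together with the fact that homotopy limits commute with the $B(\textbf{we}(-))$ construction up to the claimed equivalence — yields the result. I would streamline the exposition by citing \cite{Model-Holims} for the technical core and emphasizing only how the lax limit realizes the cobar resolution; the genuinely delicate point to get right is the interaction between the cofibrancy restriction and the homotopy-limit computation.
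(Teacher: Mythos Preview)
The paper does not give its own proof of Theorem~\ref{thm5.3}; it is quoted directly from \cite[Theorem 5.1]{Model-Holims}. The only hint the paper offers about the method is the sentence immediately following the statement: Bergner's proof is said to rest on Theorem~\ref{thm5.4}, the decomposition of $L_C(M^c)$ as a bisimplicial set whose $n$th column is weakly equivalent to $\coprod_{[\alpha]} B\mathrm{Aut}^h(\alpha)$, the coproduct running over weak equivalence classes in $M^{[n]}$. That decomposition is what lets Bergner compute the homotopy limit column by column and match it with the corresponding decomposition of $L_C(\textbf{LAX}_I(M)^c)$.

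Your proposal takes a different route via cobar resolutions and cofinality, and it contains a genuine gap. You assert that the cobar total object ``records a choice of object $x_\alpha$ together with weak equivalences $M(\theta)(x_\alpha)\xrightarrow{\sim}x_\beta$ compatible under composition --- exactly the data defining an object of $\textbf{LAX}_I(M)^c$.'' This is not accurate: the cobar construction produces \emph{homotopy-coherent} sections, with higher compatibility data at each cosimplicial level, whereas an object of $\textbf{LAX}_I(M)$ satisfies the \emph{strict} cocycle identity $u^{\delta\theta}_{\alpha,\gamma}=u^\delta_{\beta,\gamma}\,M(\delta)(u^\theta_{\alpha,\beta})$ of Definition~\ref{def5.1}. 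Passing from coherent to strict is exactly the substance of the theorem, and your parenthetical ``so that one may freely rectify'' does not supply an argument. The $B\mathrm{Aut}^h$ decomposition of Theorem~\ref{thm5.4} is precisely the device Bergner uses to sidestep a direct rectification. Finally, your plan to ``cite \cite{Model-Holims} for the technical core'' is circular: that reference \emph{is} the source of the theorem you are trying to prove.
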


Throughout the rest of the document, we will write $L_{H}(C)$ for the hammock localization of a category with weak equivalences $C$ (see \cite[Definition 2.1]{DK-Calculating}). Whenever we write $L_{H}(C)$, we will assume that we postcompose with fibrant replacement in the Bergner model structure, so we get a fibrant simplicial category. 

The preceding theorem is proven using the following characterization of $L_{C}(X)$: 

\begin{theorem}\label{thm5.4}
Suppose that $M$ is a model category. For $x \in M$, let $\mathrm{Aut}^{h}(x)$ denote the simplicial monoid of self weak equivalences of $x$. Then $L_{C}(M^{c})$ is weakly equivalent to a bisimplicial set of the form
$$
\coprod_{[x]} B\mathrm{Aut}^{h}(x) \Leftarrow \coprod_{[\alpha] : x \rightarrow y} B\mathrm{Aut}^{h}(\alpha)  \Lleftarrow \cdots 
$$  
where the coproducts runs over weak equivalences classes of objects in $M, M^{[1]}, \cdots$.   
\end{theorem}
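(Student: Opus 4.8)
The plan is to deduce this from the explicit description of the classifying diagram $L_C$ given by Rezk together with the fact that $L_C(M^c)$ only depends on the category with weak equivalences $(M^c, \mathrm{we})$, and that this category is equivalent (as a category with weak equivalences) to a disjoint union of one-object pieces indexed by weak equivalence classes. First I would recall that in horizontal degree $n$, $L_C(M^c)_n = B(\mathrm{we}((M^c)^{[\mathbf n]}))$, so the bisimplicial set is levelwise the nerve of the groupoid-like category of $n$-chains of objects of $M^c$ and levelwise weak equivalences between them. The key observation is that a category of the form $\mathrm{we}(\mathcal D)$ (all morphisms weak equivalences, hence a category in which every morphism becomes invertible after localization, though not literally a groupoid) has nerve weakly equivalent to $\coprod_{[d]} B\mathrm{Aut}^h(d)$ — i.e. the nerve decomposes as a coproduct over connected components (weak equivalence classes $[d]$), and each component, being a category with weak equivalences with a single object up to homotopy, has nerve weakly equivalent to the classifying space of the simplicial monoid (or $\infty$-group) of self-equivalences $\mathrm{Aut}^h(d) = \mathrm{hom}_{L_H(M^c)}(d,d)^{\times}$ of that object.

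The key steps, in order, would be: (i) identify the $0$th horizontal level: $B(\mathrm{we}(M^c))$ splits as $\coprod_{[x]} B(\mathrm{we}(M^c)_{[x]})$ over weak equivalence classes $[x]$ of objects, where $\mathrm{we}(M^c)_{[x]}$ is the full subcategory on objects in the class of $x$; then argue each such connected piece is weakly equivalent to $B\mathrm{Aut}^h(x)$, using the standard fact that for a category with weak equivalences having a single homotopy type, its classifying space is a model for the classifying space of the homotopy automorphism monoid (this is essentially the content of Dwyer–Kan theory relating $L_H$ to classifying spaces, cf. the hammock localization $L_H(M^c)$ being a fibrant simplicial category with $\mathrm{hom}_{L_H(M^c)}(x,x)^\times \simeq \mathrm{Aut}^h(x)$, and $B$ of a one-object simplicial category being the classifying space of its endomorphism monoid). (ii) Repeat verbatim in horizontal degree $1$: objects of $\mathrm{we}(M^{c,[1]})$ are morphisms $\alpha : x \to y$ in $M^c$, morphisms are squares of weak equivalences; the connected components are weak equivalence classes $[\alpha]$ of morphisms, and each component is weakly equivalent to $B\mathrm{Aut}^h(\alpha)$ where $\mathrm{Aut}^h(\alpha)$ is the homotopy automorphism monoid of $\alpha$ regarded as an object of the model category $M^{[1]}$. (iii) Do the same in every horizontal degree $n$, with $M^{[1]}$ replaced by $M^{[\mathbf n]}$, giving the displayed "bar-type" bisimplicial object with face maps $\Leftarrow, \Lleftarrow, \dots$ induced by the coface maps of $[\mathbf n]$ (restriction/composition of chains). (iv) Assemble these levelwise equivalences into a map of bisimplicial sets that is a levelwise (horizontal) weak equivalence, hence a weak equivalence in the complete Segal / Reedy model structure, which is what is asserted.

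The main obstacle I expect is step (i)'s claim that each connected component of $\mathrm{we}(\mathcal D)$ — where $\mathcal D$ is $(M^c)^{[\mathbf n]}$ — has classifying space equivalent to $B\mathrm{Aut}^h$ of a representative object. The subtlety is that $\mathrm{we}(\mathcal D)$ is \emph{not} a groupoid (weak equivalences need not be isomorphisms), so one cannot just invoke $B$ of a connected groupoid being $BG$; instead one needs the homotopy-theoretic input that the classifying space of a category with weak equivalences $(\mathcal D, \mathrm{we})$ with a single weak homotopy type is $B$ of the $\infty$-monoid of self-homotopy-equivalences. The cleanest way is to cite Dwyer–Kan: $B(\mathrm{we}\,\mathcal D) \simeq \coprod_{[d]} B\,\mathrm{hom}_{\overline{L_H(\mathcal D)}}(d,d)$ where $\overline{L_H}$ denotes a fibrant replacement and the hom-space is taken after restricting to components, and then identifying $\mathrm{hom}_{L_H(\mathcal D)}(d,d)^\times$ with $\mathrm{Aut}^h(d)$ — the latter being precisely the definition of the simplicial monoid $\mathrm{Aut}^h$. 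A secondary technical point is checking functoriality in $[\mathbf n]$, i.e. that the face maps of the resulting bisimplicial set really are the evident "forget/compose" maps on hammock-localization automorphism monoids, which amounts to naturality of the Dwyer–Kan comparison and should be routine once the degree-$n$ identification is set up uniformly.
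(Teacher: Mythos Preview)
The paper does not supply a proof of Theorem~\ref{thm5.4}; it is stated as a known characterization of $L_C(M^c)$ (originating in Rezk's work and used in Bergner's \cite{Model-Holims} to prove Theorem~\ref{thm5.3}). The only argument the paper gives is the brief sketch in Remark~\ref{rmk5.9}, where the analogous statement for cofibration categories with calculus of fractions is deduced from \cite[Corollary 6.4]{DK-Calculating} together with the comparison $L_C(N)\simeq i_0\mathfrak{B}(L_H(N^c))$ of \cite{Kapulkin-Szumilo} and the description of $i_0\mathfrak{B}$ from \cite{Bergner-Arising}.

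Your proposal is correct and amounts to the same argument, carried out directly on the horizontal levels of $L_C$ rather than passing through $i_0\mathfrak{B}$. The core step you single out---that each connected component of $B(\mathrm{we}\,(M^c)^{[\mathbf n]})$ is weakly equivalent to $B\mathrm{Aut}^h$ of a representative object---is exactly the content of the Dwyer--Kan result the paper cites (\cite[Corollary 6.4]{DK-Calculating}); note that this result uses the calculus-of-fractions hypothesis, which in the model-category setting is supplied by the factorization axioms. Your levelwise approach is arguably more transparent for the statement as written, while the paper's route via $i_0\mathfrak{B}$ has the advantage of making naturality in $[\mathbf n]$ automatic (it comes from functoriality of $L_H$ and $\mathfrak{B}$ rather than needing to be checked by hand, the point you flag in step~(iv)).
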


\begin{theorem}\label{thm5.5}
Suppose that $M$ is a diagram of model categories. Then there is a zig-zag of natural weak equivalences connecting 
$\mathrm{holim}_{I} \mathfrak{B}L_{H}(M^{c}) $ and $\mathfrak{B}L_{H}(\textnormal{\textbf{LAX}}_{I}(X)^{c})$. 
\end{theorem}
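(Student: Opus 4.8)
\textbf{Proof proposal for Theorem~\ref{thm5.5}.}

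The plan is to deduce this from Theorem~\ref{thm5.3} by comparing the two models $L_C$ and $\mathfrak{B}L_H(-^c)$ for the homotopy theory presented by a model category (or, more generally, a cofibration category). First I would recall that for any category with weak equivalences $C$ (in particular a cofibration category such as $\textbf{LAX}_I(M)^c$ or $M(\alpha)^c$) there is a natural zig-zag of weak equivalences between $L_C(C)$ and the complete Segal space obtained from the Dwyer--Kan/hammock localization $L_H(C)$; this is the content of the comparison results of Rezk and Bergner relating the classifying-diagram model to the hammock-localization model (and, after passing through Theorem~\ref{thm1.8}, to the homotopy coherent nerve $\mathfrak{B}L_H(C)$). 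Concretely, both $L_C$ and $\mathfrak{B}L_H(-^c)$ send a (cofibration) category to its associated $(\infty,1)$-category, and the zig-zag is natural in exact functors. Applying this naturally to the diagram $M$ and to $\textbf{LAX}_I(M)^c$ gives a commuting (up to the zig-zag) comparison between $L_C$-data and $\mathfrak{B}L_H(-^c)$-data.

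Next I would organize the argument as follows. Apply Theorem~\ref{thm5.3} to get a natural weak equivalence $L_C(\textbf{LAX}_I(M)^c) \xrightarrow{\sim} \mathrm{holim}_I L_C(M)$ in the complete Segal model structure. Then invoke the natural zig-zag $L_C(M) \simeq (\text{complete Segal space of }) \mathfrak{B}L_H(M^c)$ levelwise over $I$; since homotopy limits in a model category are invariant under objectwise weak equivalence of diagrams, this yields $\mathrm{holim}_I L_C(M) \simeq \mathrm{holim}_I (\text{c.S.s. of } \mathfrak{B}L_H(M^c))$. On the other side, the same natural zig-zag applied to the single cofibration category $\textbf{LAX}_I(M)^c$ gives $L_C(\textbf{LAX}_I(M)^c) \simeq \mathfrak{B}L_H(\textbf{LAX}_I(M)^c)$ as complete Segal spaces. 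Finally I would transport the whole chain across the Quillen equivalences connecting the complete Segal model structure, the Joyal model structure, and the Bergner model structure (the former via the first-row vs. first-column functors of Joyal--Tierney, then Theorem~\ref{thm1.8}), noting that $\mathrm{holim}$ is preserved (up to weak equivalence) by the right Quillen functors in play, so that $\mathrm{holim}_I$ computed in complete Segal spaces matches $\mathrm{holim}_I \mathfrak{B}L_H(M^c)$ computed in the Joyal model structure. Concatenating the zig-zags produces the desired natural zig-zag of weak equivalences between $\mathrm{holim}_I \mathfrak{B}L_H(M^c)$ and $\mathfrak{B}L_H(\textbf{LAX}_I(M)^c)$.

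The main obstacle I anticipate is bookkeeping of naturality and of the passage between model structures: one must make sure the zig-zag $L_C(-) \simeq \mathfrak{B}L_H(-^c)$ is natural enough (in exact functors, or at least in the structure maps of the diagram $M$ and in the projection $\textbf{LAX}_I(M)^c \to M(\alpha)^c$) that it can be applied both levelwise to $M$ and globally to the lax limit compatibly, and that homotopy limits are genuinely preserved under the chain of Quillen equivalences relating complete Segal spaces, quasi-categories, and simplicial categories. A secondary technical point is that $\textbf{LAX}_I(M)^c$ is only a cofibration category, not a model category, so one needs the version of the $L_C$-vs-$L_H$ comparison valid for cofibration categories (available through the work on cofibration categories cited in the paper, e.g. Kapulkin--Szumiło), and one must check that Theorem~\ref{thm5.3} is being applied in that generality. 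Once these compatibilities are in place the result is a formal concatenation.
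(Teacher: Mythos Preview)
Your proposal is correct and follows essentially the same route as the paper: apply Theorem~\ref{thm5.3}, use a natural comparison between $L_C$ and $\mathfrak{B}L_H(-^c)$ valid for cofibration categories (the paper cites \cite[Corollary 4.5, Remark 4.6]{Kapulkin-Szumilo} for exactly this), and transport across the Joyal--Tierney Quillen equivalence, using that the right adjoint preserves homotopy limits. The paper is slightly more streamlined in that it works directly with the right Quillen functor $\mathrm{ev}_0 : s^2\mathrm{Set} \to \mathrm{sSet}$ and the identification $\mathrm{ev}_0 L_C(-) \simeq \mathfrak{B}L_H(-^c)$, so the passage through the Bergner model structure you mention is not needed; but the substance is the same, and your anticipated obstacles (naturality, the cofibration-category generality for $\textbf{LAX}_I(M)^c$) are precisely the points the paper addresses.
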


\begin{proof}

Let  $$ i_{0} : \mathrm{s^{2}Set} \rightleftarrows \mathrm{sSet} : ev_{0}$$  be the pair of adjoint functors such that $ev_{0}$ takes a bisimplicial set $X$ to $X_{0,*}$.   This is a Quillen equivalence between the Bergner and Joyal model structures (\cite[Theorem 4.11]{JT1}).
We have natural weak equivalences 
\begin{align*}
\mathfrak{B}L_{H}(\textbf{LAX}_{I}(M)^{c}) \simeq  \mathrm{ev}_{0}L_{C}(\textbf{LAX}_{I}(M)) \simeq \mathrm{ev}_{0}\mathrm{holim}_{I} L_{C}(M) \simeq 
\\ 
\mathrm{holim}_{I}  \mathrm{ev}_{0} L_{C}(M) \simeq \mathrm{holim}_{I} \mathfrak{B}L_{H}(M^{c}).
\end{align*}
 The first and the last weak equivalences follow from Remark~\ref{rmk5.2}, as well as \cite[Corollary 4.5, Remark 4.6]{Kapulkin-Szumilo}. The second follows from Theorem~\ref{thm5.3}. The third follows from the fact that $ev_{0}$ is the right adjoint of a Quillen equivalence, as well as the construction of homotopy limits.  
\end{proof}

Given a category with weak equivalences $M$, we will write $Ho(M)$ for its homotopy category (assuming it exists).

\begin{corollary}\label{cor5.6}
Suppose that $M$ is a presheaf of model categories. Then $\mathfrak{B}L_{H}(M^{c})$ satisfies the effective descent with respect to a cover $R \subseteq \mathrm{hom}(-, U)$ iff the map  
$$
\mathrm{Ho}(M(U)) \rightarrow \mathrm{Ho}(\textnormal{\textbf{LAX}}_{R^{\mathrm{op}}} (M|_{R}))
$$ 
induced by restriction is essentially surjective. 
\end{corollary}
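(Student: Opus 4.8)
The plan is to combine the characterization of effective descent from Lemma~\ref{lem3.5} (in the guise of Example~\ref{exam3.6}, but more generally via the $\mathrm{holim}$ over the covering sieve) with the identification of that $\mathrm{holim}$ provided by Theorem~\ref{thm5.5}. Concretely, set $I = R^{\mathrm{op}}$, regarded as a small category via the embedding $R \subseteq \mathscr{C}/U$, and apply $\mathfrak{B}L_{H}((-)^{c})$ sectionwise to $M$ to obtain the presheaf of quasi-categories $\mathfrak{B}L_{H}(M^{c})$. By Definition~\ref{def3.1} together with Lemma~\ref{lem3.3}, $\mathfrak{B}L_{H}(M^{c})$ satisfies effective descent with respect to $R$ if and only if
$$
\mathfrak{B}L_{H}(M(U)^{c}) \longrightarrow \mathrm{holim}_{\phi : V \to U,\, \phi \in R^{\mathrm{op}}} \mathfrak{B}L_{H}(M(V)^{c})
$$
is essentially surjective, where the $\mathrm{holim}$ is taken in the Joyal model structure.

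The heart of the argument is then to rewrite the target. First I would invoke Theorem~\ref{thm5.5} with the diagram of model categories $M|_{R} : R^{\mathrm{op}} \to \mathrm{Cat}$, which gives a zig-zag of natural weak equivalences connecting $\mathrm{holim}_{R^{\mathrm{op}}} \mathfrak{B}L_{H}((M|_R)^{c})$ with $\mathfrak{B}L_{H}(\textbf{LAX}_{R^{\mathrm{op}}}(M|_R)^{c})$. Naturality is what makes this usable: the canonical map from the constant-diagram value $\mathfrak{B}L_{H}(M(U)^{c})$ into the $\mathrm{holim}$ is identified, up to the zig-zag, with the map $\mathfrak{B}L_{H}(M(U)^{c}) \to \mathfrak{B}L_{H}(\textbf{LAX}_{R^{\mathrm{op}}}(M|_R)^{c})$ induced by the functor $M(U)^{c} \to \textbf{LAX}_{R^{\mathrm{op}}}(M|_R)^{c}$ sending a cofibrant $x \in M(U)$ to the constant lax section $(x|_V, \mathrm{id})$ (all the structure maps are identities, hence weak equivalences, and restrictions of cofibrant objects along left Quillen functors remain cofibrant, so this does land in the superscript-$c$ subcategory). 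Since essential surjectivity of a map of quasi-categories is invariant under Joyal equivalence (it is detected by $\pi_0 J(-)$, Lemma~\ref{lem1.5}), the original map is essentially surjective iff this comparison map is.

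Finally I would translate essential surjectivity of $\mathfrak{B}L_{H}(M(U)^{c}) \to \mathfrak{B}L_{H}(\textbf{LAX}_{R^{\mathrm{op}}}(M|_R)^{c})$ into the stated condition about homotopy categories. By Theorem~\ref{thm1.9}(1), $P\mathfrak{B}(\textbf{N}) \simeq \pi_0(\textbf{N})$ for a fibrant simplicial category $\textbf{N}$, and $\pi_0 L_{H}(C)$ is naturally equivalent to $\mathrm{Ho}(C)$ for a category with weak equivalences $C$ (this is the defining property of the hammock localization, \cite{DK-Calculating}); combined with the definition of essential surjectivity of a map of quasi-categories (Definition~\ref{def1.4}, that $P(-)$ be essentially surjective) and the fact that $\mathfrak{B}L_{H}$ turns the exact inclusion $\textbf{LAX}_{R^{\mathrm{op}}}(M|_R)^{c} \hookrightarrow \textbf{LAX}_{R^{\mathrm{op}}}(M|_R)$ into a DK-equivalence on $\pi_0$ (so that $\mathrm{Ho}(\textbf{LAX}_{R^{\mathrm{op}}}(M|_R)^{c}) \simeq \mathrm{Ho}(\textbf{LAX}_{R^{\mathrm{op}}}(M|_R))$, using Remark~\ref{rmk5.2} and \cite[Corollary 4.5, Remark 4.6]{Kapulkin-Szumilo}), essential surjectivity becomes exactly surjectivity on isomorphism classes of objects of $\mathrm{Ho}(M(U)) \to \mathrm{Ho}(\textbf{LAX}_{R^{\mathrm{op}}}(M|_R))$, which is the assertion. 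The main obstacle I anticipate is bookkeeping the naturality in Theorem~\ref{thm5.5} carefully enough to be sure the comparison map really is the "constant lax section" functor rather than some other map inducing the same thing on homotopy categories — but since in the end only the induced map on $\pi_0$/$\mathrm{Ho}$ matters for essential surjectivity, even a zig-zag that is natural only up to homotopy suffices, which defuses most of the difficulty.
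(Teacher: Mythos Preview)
Your proposal is correct and follows essentially the same route as the paper's proof: invoke Lemma~\ref{lem3.3}(4) to recast effective descent as essential surjectivity into the $\mathrm{holim}$, apply Theorem~\ref{thm5.5} to replace that $\mathrm{holim}$ by $\mathfrak{B}L_{H}(\textbf{LAX}_{R^{\mathrm{op}}}(M|_R)^{c})$, and then use Theorem~\ref{thm1.9}(1) together with \cite[Proposition 3.1]{DK-Calculating} to identify $P\mathfrak{B}L_{H}(Q)$ with $\mathrm{Ho}(Q)$. Your write-up is in fact more explicit than the paper's in two places the paper glosses over: the naturality of the zig-zag in Theorem~\ref{thm5.5} (which you correctly note only needs to hold up to homotopy since only $\pi_{0}J$ is at stake), and the passage from $\mathrm{Ho}(\textbf{LAX}_{R^{\mathrm{op}}}(M|_R)^{c})$ to $\mathrm{Ho}(\textbf{LAX}_{R^{\mathrm{op}}}(M|_R))$.
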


\begin{proof}

By Theorem~\ref{thm5.5}, we want to show that the map 

$$
\mathfrak{B}L_{H}(M(U)^{c}) \rightarrow \mathfrak{B}L_{H}(\textbf{LAX}_{R^{\mathrm{op}}}(M|_{R})^{c}) 
$$
induced by restriction is an essentially surjective map of quasi-categories. 

We have isomorphisms, natural in categories with weak equivalences $Q$
$$
\pi_{0}\mathfrak{B}L_{H}(Q) \cong \pi_{0}L_{H}(Q) \simeq Ho(Q)
$$
by Theorem~\ref{thm1.9} and \cite[Proposition 3.1]{DK-Calculating}. Thus, the result follows.

\end{proof}

Suppose that $\textbf{M}$ is a presheaf of simplicial model categories on a site $\mathscr{C}$, so that $\textbf{M}_{0} = M$ is a left Quillen presheaf. Also, suppose that each restriction map $M(\phi) : M(U) \rightarrow M(V)$ preserves fibrant objects. Write $\textbf{M}^{\circ}$ for the subpresheaf such that $\textbf{M}^{\circ}(U)$ is the full subcategory (i.e. in each simplicial degree) consisting of bifibrant objects. Then we have the following: 

\begin{corollary}\label{cor5.7}
$\mathfrak{B}(\textnormal{\textbf{M}}^{\circ})$ satisfies effective descent (\ref{def3.1}) with respect to a cover $R \subseteq \mathrm{hom}(-, U)$ iff the map
$$
\mathrm{Ho}(M(U)) \rightarrow \mathrm{Ho}(\textnormal{\textbf{LAX}}_{R^{\mathrm{op}}} (M|_{R}))
$$ 
induced by restriction is essentially surjective. 
 \end{corollary}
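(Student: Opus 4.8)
\textbf{Proof proposal for Corollary~\ref{cor5.7}.}

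The plan is to reduce this statement to Corollary~\ref{cor5.6} by producing a natural zig-zag of Joyal equivalences between $\mathfrak{B}(\textbf{M}^{\circ})$ and $\mathfrak{B}L_{H}(M^{c})$, and showing this zig-zag is compatible with the restriction maps along the cover $R$. Once such a sectionwise comparison is in place, the effective descent condition for $\mathfrak{B}(\textbf{M}^{\circ})$ transports to the effective descent condition for $\mathfrak{B}L_{H}(M^{c})$ (using Remark~\ref{rmk3.2}, since effective descent is invariant under sectionwise Joyal equivalence), and then Corollary~\ref{cor5.6} gives exactly the stated criterion.

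First I would observe that for each $U$ there is a standard comparison: the simplicial category $\textbf{M}^{\circ}(U)$ of bifibrant objects of the simplicial model category $\textbf{M}(U)$ is DK-equivalent to the hammock localization $L_{H}(M(U)^{c})$, where $M(U) = \textbf{M}(U)_{0}$. This is the classical Dwyer--Kan comparison between the simplicial mapping spaces of a simplicial model category (restricted to bifibrant objects) and the hammock-localization mapping spaces; concretely, one uses that on bifibrant objects the simplicial hom computes the correct derived mapping space, together with Theorem~\ref{thm1.9} to pass between $\textbf{hom}$ and $\mathrm{Map}_{\mathfrak{B}(-)}$. The hypothesis that each restriction $M(\phi)$ preserves fibrant objects (and, being left Quillen, preserves cofibrant objects) guarantees that $\textbf{M}^{\circ}$ is genuinely a subpresheaf and that the comparison is natural in $U$, hence assembles into a sectionwise zig-zag of DK-equivalences of presheaves of simplicial categories; applying $\mathfrak{B}$ and invoking Theorem~\ref{thm1.8} turns this into a sectionwise zig-zag of Joyal equivalences $\mathfrak{B}(\textbf{M}^{\circ}) \simeq \mathfrak{B}L_{H}(M^{c})$.

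Then I would check that this zig-zag respects the structure maps of the cover, i.e.\ that for $\phi : V \to U$ in $R$ the squares relating the $U$-level and $V$-level comparisons commute up to coherent weak equivalence; this is where one must be slightly careful, since $L_{H}$ and the bifibrant-subcategory construction are only pseudo-functorial a priori, but left Quillen functors that preserve fibrant objects induce strict exact functors on the relevant cofibration/weak-equivalence structures, so the naturality is genuine. With the comparison established sectionwise and compatibly over $R$, effective descent for $\mathfrak{B}(\textbf{M}^{\circ})$ with respect to $R$ is equivalent to effective descent for $\mathfrak{B}L_{H}(M^{c})$ with respect to $R$, and Corollary~\ref{cor5.6} identifies the latter with essential surjectivity of $\mathrm{Ho}(M(U)) \to \mathrm{Ho}(\textbf{LAX}_{R^{\mathrm{op}}}(M|_{R}))$. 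The main obstacle is the naturality/coherence of the bifibrant-replacement-versus-hammock-localization comparison across the site: making precise that the zig-zag of DK-equivalences is a zig-zag of maps of presheaves (not just a pointwise collection of equivalences) and that each intermediate presheaf of simplicial categories is well-defined requires the fibrant-preservation hypothesis and a careful bookkeeping of which constructions are strictly functorial.
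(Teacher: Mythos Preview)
Your proposal is correct and follows essentially the same approach as the paper: the paper invokes \cite[4.8 and 5.2]{DK-Function} to obtain a zig-zag of sectionwise Bergner equivalences between $L_{H}(M^{c})$ and $\textbf{M}^{\circ}$, applies $\mathfrak{B}$ and Theorem~\ref{thm1.8} to get sectionwise Joyal equivalences, and then invokes Remark~\ref{rmk3.2} and Corollary~\ref{cor5.6}. Your discussion of the naturality/coherence obstacle is more explicit than the paper's, which simply asserts the zig-zag is one of presheaves; the paper's citation of the specific Dwyer--Kan results is what makes this precise.
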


\begin{proof}
 It follows from \cite[4.8 and 5.2]{DK-Function} that there is a zig-zag of sectionwise weak equivalences (i.e. for the Bergner model structure) connecting the presheaves of fibrant simplicial categories $L_{H}(M^{c})$ and $\textbf{M}^{\circ}$. Since we can assume that all objects involved are sectionwise fibrant, we can apply the homotopy coherent nerve functor to get a zig-zag of sectionwise Joyal equivalences connecting $\mathfrak{B}(L_{H}(M^{c}))$ and $\mathfrak{B}(\textbf{M}^{\circ})$ (Theorem~\ref{thm1.8}). Thus, we have reduced to showing effective descent for $\mathfrak{B}L_{H}(M^{c})$, so the result follows from Corollary~\ref{cor5.6}.
\end{proof}

\begin{theorem}\label{thm5.8}
Suppose that $\textnormal{\textbf{M}}$ is a presheaf of simplicial model categories. Then $\mathfrak{B}(\textnormal{\textbf{M}}^{\circ})$ satisfies quasi-injective descent iff
\begin{enumerate}
\item{$\mathbf{hom}_{\textnormal{\textbf{M}}^{\circ}|_{U}}(x, y)$ satisfies injective descent for each $x, y \in \mathrm{Ob}(\textnormal{\textbf{M}}^{\circ})(U)$.}
\item{The map $\mathrm{Ho}(M(U)) \rightarrow \mathrm{Ho}(\textnormal{\textbf{LAX}}_{\Delta} (M \circ \tilde{C}(\phi)))$ is essentially surjective for each single element cover generated by a morphism $\phi$.}
\item{For each $\alpha$-bounded product $\coprod_{\alpha} U_{\alpha}$ of elements of $\mathscr{C}$
$$
\mathrm{Ho}(M\left(\coprod_{\alpha} U_{\alpha}\right)) \rightarrow \mathrm{Ho}\left( \prod_{\alpha} M(U_{\alpha})\right) 
$$
is essentially surjective. 
}
  \end{enumerate}
  
\end{theorem}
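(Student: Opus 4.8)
The plan is to deduce Theorem~\ref{thm5.8} directly from Theorem~\ref{thm4.12} applied to the presheaf of quasi-categories $X = \mathfrak{B}(\textnormal{\textbf{M}}^{\circ})$. First I would check that $X$ is genuinely a presheaf of quasi-categories: since $\textnormal{\textbf{M}}^{\circ}$ is sectionwise a fibrant simplicial category (all homs are Kan complexes, as bifibrant objects in a simplicial model category have Kan mapping spaces), Theorem~\ref{thm1.8} tells us $\mathfrak{B}$ applied sectionwise lands in quasi-categories. I would also note the normalization hypothesis $X(i) = *$ of Theorem~\ref{thm4.12} holds (or can be arranged) because the initial object's model category is trivial. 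Then the strategy is to match the three conditions of Theorem~\ref{thm5.8} with the three conditions of Theorem~\ref{thm4.12} one at a time.

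For condition (1): I would invoke Theorem~\ref{thm1.9}(2), which gives a natural zig-zag of weak equivalences between $\mathrm{Map}_{\mathfrak{B}(\textnormal{\textbf{M}}^{\circ})(U)}(x,y)$ and $\textbf{hom}_{\textnormal{\textbf{M}}^{\circ}(U)}(x,y)$. Since this zig-zag is natural in $U$, it assembles into a sectionwise weak equivalence of presheaves of Kan complexes $\mathrm{Map}_{\mathfrak{B}(\textnormal{\textbf{M}}^{\circ})|_{U}}(x,y) \simeq \mathbf{hom}_{\textnormal{\textbf{M}}^{\circ}|_{U}}(x,y)$, and injective descent is invariant under sectionwise weak equivalence (it can be checked via the fibrant replacement map, or via Corollary~\ref{cor2.16}). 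Hence condition (1) of Theorem~\ref{thm5.8} is equivalent to condition (1) of Theorem~\ref{thm4.12} for $X = \mathfrak{B}(\textnormal{\textbf{M}}^{\circ})$.

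For conditions (2) and (3): these are the essential-surjectivity conditions, and here the work is already done by Corollary~\ref{cor5.7}. That corollary identifies effective descent for $\mathfrak{B}(\textnormal{\textbf{M}}^{\circ})$ along a cover $R$ with essential surjectivity of $\mathrm{Ho}(M(U)) \rightarrow \mathrm{Ho}(\textnormal{\textbf{LAX}}_{R^{\mathrm{op}}}(M|_{R}))$. For condition (2) I would apply this with $R$ the single-element cover generated by $\phi$, and note via Example~\ref{exam3.6} that the relevant indexing reduces to $\Delta$, so $\textnormal{\textbf{LAX}}_{R^{\mathrm{op}}}$ becomes $\textnormal{\textbf{LAX}}_{\Delta}(M \circ \tilde{C}(\phi))$ — this matches exactly the $\mathrm{holim}$ condition (2) of Theorem~\ref{thm4.12} by Theorem~\ref{thm5.5} and Corollary~\ref{cor5.6} (the $\mathrm{holim}$ over the Cech nerve equals $\mathfrak{B}L_{H}$ of the lax limit). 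For condition (3) I would apply Corollary~\ref{cor5.7} to the coproduct cover $\{U_{\alpha} \rightarrow \coprod U_{\alpha}\}$; here the indexing category is discrete, so $\textnormal{\textbf{LAX}}$ reduces to the ordinary product $\prod_{\alpha} M(U_{\alpha})$, matching condition (3) of Theorem~\ref{thm4.12} (the homotopy product $\prod^{h}_{\alpha} X(U_{\alpha})$ corresponds under $\mathfrak{B}L_{H}$ to $\mathrm{Ho}(\prod_\alpha M(U_\alpha))$ on $\pi_0 J$, by Corollary~\ref{cor5.6} again). With all three conditions matched, Theorem~\ref{thm4.12} yields that $\mathfrak{B}(\textnormal{\textbf{M}}^{\circ})$ satisfies quasi-injective descent, and Theorem~\ref{thm4.10} (whose condition (1) we have verified) gives the converse direction.

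The main obstacle I anticipate is bookkeeping the naturality in condition (1) — ensuring the zig-zag from Theorem~\ref{thm1.9}(2) is natural enough in $U$ to give an honest map of simplicial presheaves, so that "satisfies injective descent" transfers cleanly; one may need to pass through $L_H(M^c)$ and use the Dwyer–Kan comparison as in the proof of Corollary~\ref{cor5.7} rather than $\mathfrak{B}$ of $\textnormal{\textbf{M}}^\circ$ directly. A secondary point is confirming that the translation between "effective descent along the single-element cover" (Corollary~\ref{cor5.7}, phrased with $\textnormal{\textbf{LAX}}_\Delta$) and "the $\mathrm{holim}$ over $\tilde C(\phi)$ is essentially surjective" (condition (2) of Theorem~\ref{thm4.12}) is exactly Example~\ref{exam3.6} combined with Theorem~\ref{thm5.5}; this is essentially immediate but worth stating explicitly so the reader sees the two hypothesis-lists line up verbatim.
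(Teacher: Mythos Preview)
Your proposal is correct and follows essentially the same route as the paper: apply Theorem~\ref{thm4.12} to $X=\mathfrak{B}(\textnormal{\textbf{M}}^{\circ})$, match condition~(1) via the zig-zag of Theorem~\ref{thm1.9}(2), and match conditions~(2) and~(3) via Corollary~\ref{cor5.7}. Your additional remarks on naturality of the zig-zag and on using Theorem~\ref{thm4.10} for the converse direction are reasonable elaborations of points the paper leaves implicit.
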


\begin{proof}
We apply Theorem~\ref{thm4.12}. Conditions (2) and (3) above are equivalent to conditions (2) and (3) of Theorem~\ref{thm4.12} by Corollary~\ref{cor5.7}.
It follows from Theorem~\ref{thm1.9} 2. that for all $x, y \in \mathrm{Ob}(\textbf{M})(U)$, there is a zig-zag of sectionwise weak equivalences connecting 
$\textbf{hom}_{\textbf{M}}(x, y) $ and $\mathrm{Map}_{\mathfrak{B}(\textbf{M}^{\circ})}(x, y)$. Thus, condition (1) above is equivalent to condition (1) of Theorem~\ref{thm4.12}.  

\end{proof}

\begin{remark}\label{rmk5.9}
 Suppose that we have an exact $I$-indexed diagram $M$ of cofibration categories, such that for each $U \in \mathrm{Ob}(\mathscr{C})$, $M(U)$ admits a (two-sided) calculus of fractions. Then Theorem~\ref{thm5.3} and Corollary~\ref{cor5.6} hold for $M$. 
 
To see this, note that Theorem~\ref{thm5.4} holds for cofibration categories $N$ admitting a 2-sided calculus of fractions. Recall the functor $i_{0}$ from Theorem~\ref{thm5.5}. We have weak equivalences connecting $i_{0}\mathfrak{B}(L_{H}(N^{c}))$ and $L_{C}(N)$ by \cite[Remark 4.6]{Kapulkin-Szumilo}. The statement now follows from \cite[Corollary 6.4]{DK-Calculating} and the characterization of the image of $i_{0} \mathfrak{B}$ found in the paper \cite{Bergner-Arising} (in particular, see Theorem 7.3 and the discussion at the end of Section 4). Thus the proof of Theorem~\ref{thm5.3} in \cite{Model-Holims} goes through verbatim for $M$. So does the proof of Corollary~\ref{cor5.6}.  

We will see in the final section how this remark can allow us to avoid the use of Grothendieck universes, by replacing presheaf of model categories by an appropriate presheaf of small subcategories.  
\end{remark}

\section{An Example: the Higher Stack of Unbounded Complexes}\label{sec8}

In this section, we show that unbounded complexes on a reasonable ringed site form a higher stack (Corollary~\ref{cor6.12}). The homotopy theory of unbounded complexes is equivalent to the homotopy of simplicial $\mathcal{R}$-module spectra,  where $\mathcal{R}$ is the structure sheaf. Thus, the problem reduces to showing the existence of the stack of simplicial $\mathcal{R}$-module spectra. We can apply Theorem~\ref{thm5.8} because the model structures involved are simplicial. The main difficulty is verifying condition (2) of the theorem; the technical heart of the section is Lemma~\ref{lem6.5}, where we verify this condition for simplicial $\mathcal{R}$-modules, and hence non-negatively graded complexes. Condition (2) for $\mathcal{R}$-module spectra then easily follows. 

In the final part of the section, we will explain how one can deal with the set-theoretic issues inherent in talking about presheaves of model categories, by showing how we can replace them with small subcategories in a way that does not lose homotopy-theoretic information. 
\\

In this section, we will fix a Grothendieck site $\mathscr{C}$, which satisfies the hypotheses of Definition~\ref{def4.11}. We will fix a sheaf of commutative, unital rings $\mathcal{R}$ on $\mathscr{C}$. 
Given a sheaf of rings $\mathcal{T}$ on a site $\mathscr{E}$, we will denote categories of the sheaves of $\mathcal{T}$-modules and sheaves of simplicial $\mathcal{T}$-modules, respectively, by 
$$
\textbf{Sh}_{\mathcal{T}}, \, \, \, s\textbf{Sh}_{\mathcal{T}}.
$$

Given an abelian category $\mathcal{A}$, we will write $\mathrm{Ch}_{+}(\mathcal{A})$, $\mathrm{Ch}(\mathcal{A})$, respectively, for the categories of non-negatively graded and unbounded chain complexes. For instance, if $\mathcal{T}$ is a sheaf of rings on a site, we write $\mathrm{Ch}_{+}(\textbf{Sh}_{\mathcal{T}}), \mathrm{Ch}(\textbf{Sh}_{\mathcal{T}})$, respectively, for non-negatively graded and unbounded complexes of $\mathcal{T}$-modules. We will denote the derived categories of non-negatively graded and unbounded complexes, respectively, by $D_{+}(\mathcal{A}), D(\mathcal{A})$. We will write $\tau_{\ge n} : \mathrm{Ch}(\mathcal{A}) \rightarrow \mathrm{Ch}(\mathcal{A})$ for the intelligent truncation of a complex at level $n$, i.e. the truncation functor that preserves homology at level n. 
\\

Let $\phi : V \rightarrow U$ be morphism generating a covering in $\mathscr{C}$. We write $\Omega_{\phi} : \Delta^{*} \rightarrow \mathrm{Cat}$ for the functor  $$ [\textbf{n}] \mapsto \textbf{Sh}_{\mathcal{R}|_{\tilde{C}(\phi)_{n}}},$$
where $\tilde{C}(\phi)$ is as defined in Example~\ref{exam2.13}, and such that for each map $\theta : [\textbf{n}] \rightarrow [\textbf{m}]$, $\Omega_{\phi}(\theta)$ is the restriction functor. We can also define a functor $\Gamma_{\phi} : \Delta^{*} \rightarrow \mathrm{Cat}$ such that $[\textbf{n}] \mapsto \mathrm{Ch}_{+}(\Omega_{\phi}(\textbf{n}))$. In this case  $\mathrm{Ch}_{+}(\mathfrak{L}_{\Delta^{*}}(\Omega_{\phi}) )$ can be identified with $\mathfrak{L}_{\Delta^{*}}(\Gamma_{\phi})$ (as ordinary categories).

We write 
$
\mathfrak{L}_{\Delta^{*}}^{\mathrm{cart}}(\Omega_{\phi})$ for the full subcategory of $\mathfrak{L}_{\Delta^{*}}(\Omega_{\phi})$ (see Definition~\ref{def5.1}) whose  objects are those $(x_{\alpha}, u_{\alpha, \beta}^{\theta})$ such that each $u_{\alpha, \beta}^{\theta}$ is an isomorphism. We call these objects \textbf{cartesian objects}. 

\begin{lemma}\label{lem6.1}
The functor $\textnormal{\textbf{Sh}}_{\mathcal{R}|_{U}} \rightarrow \mathfrak{L}_{\Delta^{*}}^{\mathrm{cart}}(\Omega_{\phi})$ induced by restriction is an equivalence of categories. 
\end{lemma}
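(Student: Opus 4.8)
The plan is to exhibit an explicit quasi-inverse and check the unit and counit are isomorphisms, exploiting that $\tilde{C}(\phi)$ is the \v{C}ech nerve of a cover, so the restriction functors are the ``pullback along a cover'' functors for sheaves of $\mathcal{R}$-modules. First I would describe the functor $F : \textbf{Sh}_{\mathcal{R}|_{U}} \rightarrow \mathfrak{L}_{\Delta^{*}}^{\mathrm{cart}}(\Omega_{\phi})$: it sends a sheaf $M$ on $\mathscr{C}/U$ (with the induced topology) to the family $(M|_{\tilde{C}(\phi)_{n}})_{n}$, where the transition morphisms $u^{\theta}_{\alpha,\beta}$ are the canonical comparison isomorphisms $\Omega_\phi(\theta)(M|_{\tilde C(\phi)_\alpha}) \xrightarrow{\sim} M|_{\tilde C(\phi)_\beta}$ coming from functoriality of restriction; these are isomorphisms precisely because restriction of sheaves along a morphism in $\mathscr{C}$ strictly commutes up to canonical iso, so $F$ indeed lands in the cartesian subcategory. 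The cocycle condition $u^{\delta\theta}_{\alpha,\gamma} = u^\delta_{\beta,\gamma}\, \Omega_\phi(\delta)(u^\theta_{\alpha,\beta})$ is just the coherence of these canonical isomorphisms.

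Next I would construct the candidate inverse $G$. Given a cartesian object $(x_\alpha, u^\theta_{\alpha,\beta})$, the datum $x_0$ is a sheaf of $\mathcal{R}|_V$-modules on $\tilde{C}(\phi)_0 = V$, and the $u$'s for the two coface maps $[\textbf{0}] \rightrightarrows [\textbf{1}]$ give an isomorphism between the two restrictions of $x_0$ to $V\times_U V = \tilde{C}(\phi)_1$, i.e. a descent datum, which the cartesian+cocycle conditions show is effective descent data in the strict sheaf-theoretic sense. Since $\phi$ generates a covering sieve, sheaves on $\mathscr{C}/U$ satisfy (strict) descent along $\phi$: the category $\textbf{Sh}_{\mathcal{R}|_U}$ is equivalent to the category of descent data for $\phi$ (this is the classical comparison lemma / the statement that $\{V\to U\}$ is a cover, applied to the sheaf condition for $\mathcal{R}$-modules). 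So I would define $G$ by sending $(x_\alpha, u^\theta_{\alpha,\beta})$ to the sheaf on $U$ obtained by glueing $x_0$ along this descent datum, and check functoriality on morphisms of lax objects (a morphism $\psi_\bullet$ restricts to $\psi_0$ compatible with the glueing isomorphisms, hence descends).

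Then I would verify $GF \cong \mathrm{id}$ and $FG \cong \mathrm{id}$. The isomorphism $GF(M) \cong M$ is the statement that glueing the restrictions of a sheaf recovers the sheaf, which is exactly the sheaf condition for $M$ as a sheaf on $\mathscr{C}/U$; naturality is routine. For $FG \cong \mathrm{id}$ the key point is that a cartesian object is determined, up to canonical isomorphism, by its value $x_0$ together with the descent isomorphism $u$ over $\tilde{C}(\phi)_1$: all higher $x_n$ are canonically identified, via the $u^\theta$'s, with the restriction of $x_0$ to $\tilde{C}(\phi)_n$ (using that each $\tilde C(\phi)_n$ receives a map from $\tilde C(\phi)_0$ coming from an iterated coface, and the cocycle condition makes the identification independent of the chosen path). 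Hence $FG(x_\bullet) \cong x_\bullet$, naturally. An alternative packaging: observe that $\mathfrak{L}_{\Delta^{*}}^{\mathrm{cart}}(\Omega_\phi)$ is, by unwinding the definitions, literally the category of cartesian sections of the fibered category of $\mathcal{R}$-modules over $\tilde C(\phi)$, which by standard descent theory for the cover $\phi$ is equivalent to $\textbf{Sh}_{\mathcal{R}|_U}$.

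The main obstacle I anticipate is bookkeeping, not mathematics: one must be careful that $\tilde{C}(\phi)$ as used here is a \emph{cosimplicial} object (the indexing is $\Delta^{*}$), so ``restriction'' goes in the direction matching the contravariance in Definition~\ref{def5.1}, and that the cartesian/cocycle conditions there translate exactly to a (normalized) descent datum; also one should make sure the passage from the $1$-truncated descent datum to the full cartesian cosimplicial object is genuinely an equivalence and not merely essentially surjective, which is where the cocycle condition does its work. Once the dictionary between ``cartesian lax limit'' and ``descent datum'' is set up, the result is the classical effectivity of descent for sheaves of modules along a covering morphism.
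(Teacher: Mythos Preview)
Your proposal is correct and takes essentially the same approach as the paper: both reduce the statement to classical descent (the glueing theorem) for sheaves of modules along the cover $\phi$. The paper simply cites this as \cite[Lemmas 7.26.1 and 7.26.4]{Stacks}, whereas you unpack the dictionary between cartesian objects of $\mathfrak{L}_{\Delta^{*}}(\Omega_\phi)$ and descent data and sketch the quasi-inverse explicitly; the mathematical content is the same.
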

\begin{proof}
This follows from the glueing theorem for sheaves on a site (see \cite[Lemmas 7.26.1 and 7.26.4]{Stacks}).
 \end{proof} 
 
 \begin{lemma}\label{lem6.2}
 $\mathfrak{L}_{\Delta^{*}}(\Omega_{\phi})$ has enough injectives. 
 \end{lemma}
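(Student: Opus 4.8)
The plan is to exhibit enough injectives in $\mathfrak{L}_{\Delta^{*}}(\Omega_{\phi})$ by transferring the known fact that each category $\textbf{Sh}_{\mathcal{R}|_{\tilde{C}(\phi)_{n}}}$ of sheaves of modules has enough injectives, together with the observation that the restriction functors $\Omega_{\phi}(\theta)$ between these module categories have exact left adjoints (extension of the structure sheaf along a morphism of sites, or more concretely $j_{!}$-type functors), so that the right adjoints preserve injectives. The lax limit is a Grothendieck abelian category (it is an instance of the category of modules over a diagram, or of lax functors; one can see directly that it has all limits and colimits computed levelwise, a generator given by the collection of the generators at each vertex pushed around, and exact filtered colimits), and the standard criterion then guarantees enough injectives. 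But for the purpose of the later arguments it is cleaner to give an explicit construction of the injectives, which is what I would do.

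First I would identify the right building blocks. For each object $\alpha = [\textbf{n}]$ of $\Delta^{*}$ there is a ``constant diagram at $\alpha$'' type functor: the left adjoint $L_{\alpha} : \Omega_{\phi}(\alpha)\textrm{-Mod} \to \mathfrak{L}_{\Delta^{*}}(\Omega_{\phi})$ to the evaluation functor $\mathrm{ev}_{\alpha}$ that sends a lax family $(x_{\beta}, u^{\theta}_{\beta,\gamma})$ to $x_{\alpha}$. Evaluation is exact (limits and colimits in the lax limit are levelwise), so its right adjoint $R_{\alpha}$ is exact-preserving on injectives: if $I$ is an injective object of $\Omega_{\phi}(\alpha)\textrm{-Mod}$ then $R_{\alpha}(I)$ is injective in $\mathfrak{L}_{\Delta^{*}}(\Omega_{\phi})$. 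Next, given an arbitrary object $(x_{\alpha}, u^{\theta}_{\alpha,\beta})$, for each $\alpha$ choose a monomorphism $x_{\alpha} \hookrightarrow I_{\alpha}$ into an injective module over $\Omega_{\phi}(\alpha)$; these assemble to a map $(x_{\alpha}) \to \prod_{\alpha} R_{\alpha}(I_{\alpha})$ via the adjunction units, and since $\mathrm{ev}_{\alpha}$ of this map recovers (a retract-containing the) monomorphism $x_{\alpha}\hookrightarrow I_{\alpha}$ for each $\alpha$, and monomorphisms in the lax limit are detected levelwise, the assembled map is a monomorphism into an injective object. (A small point: $\Delta^{*}$ has infinitely many objects, so one must form an actual product $\prod_{\alpha} R_{\alpha}(I_{\alpha})$; products exist because they are computed levelwise in the module categories, and a product of injectives is injective.)

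The main obstacle is the bookkeeping around the lax (rather than strictly commutative) structure: I need to make sure the functors $L_{\alpha}$, $R_{\alpha}$ are genuinely adjoint to evaluation in the lax-limit category — the right adjoint $R_{\alpha}$ has a slightly intricate description because a lax family must specify compatible maps $\Omega_{\phi}(\theta)(y_{\alpha}) \to y_{\beta}$ for every $\theta:\beta\to\alpha$, so $R_{\alpha}(M)$ at vertex $\beta$ is something like $\prod_{\theta : \alpha \to \beta}$ (or $\prod_{\theta:\beta\to\alpha}$, depending on variance) of pullbacks of $M$ along the relevant restriction functors, and one must check the cocycle condition holds and that exactness and injective-preservation survive. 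I would handle this by either (i) citing the general fact that for a diagram of Grothendieck categories connected by functors with exact left adjoints the associated lax (op)limit / Grothendieck construction is again Grothendieck abelian with enough injectives, or (ii) invoking Lemma~\ref{lem6.1}-style identifications to reduce to a more familiar module category. The cleanest write-up is probably: show $\mathfrak{L}_{\Delta^{*}}(\Omega_{\phi})$ is Grothendieck abelian (limits/colimits levelwise, generator $\bigoplus_\alpha L_\alpha(G_\alpha)$ for generators $G_\alpha$, AB5 levelwise), then quote the standard theorem that Grothendieck abelian categories have enough injectives.

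\begin{proof}
The category $\mathfrak{L}_{\Delta^{*}}(\Omega_{\phi})$ is abelian, with kernels and cokernels (indeed all limits and colimits) computed at each vertex $\alpha$ in the module category $\Omega_{\phi}(\alpha)$; the structure maps $u^{\theta}_{\alpha,\beta}$ of a (co)limit are induced from those of the diagrams since each restriction functor $\Omega_{\phi}(\theta)$ preserves all limits and colimits (it has both adjoints). In particular a morphism in $\mathfrak{L}_{\Delta^{*}}(\Omega_{\phi})$ is a monomorphism (resp. epimorphism) iff each $\psi_{\alpha}$ is. Filtered colimits are therefore exact because they are exact in each $\Omega_{\phi}(\alpha)$. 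For each object $\alpha$ the evaluation functor $\mathrm{ev}_{\alpha} : \mathfrak{L}_{\Delta^{*}}(\Omega_{\phi}) \to \Omega_{\phi}(\alpha)$ is exact and cocontinuous, so it has a left adjoint $L_{\alpha}$; taking a generator $G_{\alpha}$ of $\Omega_{\phi}(\alpha)$ for each $\alpha$, the object $\bigoplus_{\alpha} L_{\alpha}(G_{\alpha})$ is a generator of $\mathfrak{L}_{\Delta^{*}}(\Omega_{\phi})$, since a subobject with trivial restriction against all $L_{\alpha}(G_{\alpha})$ has, by adjunction, trivial image under every $\mathrm{ev}_{\alpha}$, hence is zero. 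Thus $\mathfrak{L}_{\Delta^{*}}(\Omega_{\phi})$ is a Grothendieck abelian category, and therefore has enough injectives.
\end{proof}
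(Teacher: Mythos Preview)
Your argument is correct and follows essentially the same route as the paper: verify that $\mathfrak{L}_{\Delta^{*}}(\Omega_{\phi})$ is a Grothendieck abelian category by checking AB5 (filtered colimits are levelwise, hence exact) and exhibiting a generating family built from generators at each level, then invoke the Tohoku theorem. The paper writes the generators down explicitly as objects $\mathfrak{U}_{n,G_n}$ (zero below level $n$, and the restriction of $G_n$ above), which is precisely your $L_{[n]}(G_n)$ made concrete.

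One small slip: you justify the existence of the left adjoint $L_{\alpha}$ by saying $\mathrm{ev}_{\alpha}$ is ``exact and cocontinuous.'' Cocontinuity (colimit preservation) yields a \emph{right} adjoint; what you need for a left adjoint is continuity (limit preservation), together with accessibility. You have already observed that limits are computed levelwise, so $\mathrm{ev}_{\alpha}$ is continuous and the conclusion stands, but the stated reason should be corrected.
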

 
 \begin{proof}
 
 By \cite[Theorem 10.1, Proposition 1.9]{Tohoku}
 It suffices to show that the abelian category admits a system of generators and satisfies axiom AB5. The second follows since sheaves on a site are an AB5 category, and filtered colimits and finite limits in $\mathfrak{L}_{\Delta^{*}}\Omega_{\phi}$ are calculated sectionwise. For each $n \in \mathbb{N}$, let $A$ be an object of $\Omega_{\phi}([\textbf{n}])$ and define an element $\mathfrak{U}_{n, A} = (y_{[\textbf{k}]}, \theta^{\alpha}_{[\textbf{k}], [\textbf{m}]})$ by  
$$ y_{[\textbf{m}]} =
\left\{
	\begin{array}{ll}
		0  & \mbox{if } m < n \\
		(A) |_{\tilde{C}(\phi)_{m}} & \mbox{if } m \ge n
	\end{array}
\right.
$$
 and such that $\theta_{[\textbf{m}], [\textbf{k}] }^{\alpha}$ is the identity for $m, k \ge n$ and is the zero map otherwise. For each $n \in \mathbb{N}$ choose a generator $G_{n}$ of $\Omega_{\phi}([\textbf{n}])$.
 The $\mathfrak{U}_{n, G_{n}}$'s are a system of generators for $\mathfrak{L}_{\Delta^{*}}\Omega_{\phi}$.
  
    \end{proof}

 \begin{lemma}\label{lem6.3}
 Let $r^{*} : \mathrm{Ch}_{+}(\mathbf{Sh}_{\mathcal{R}|_{U}}) \rightarrow \mathrm{Ch}_{+}(\mathfrak{L}_{\Delta^{*}}(\Omega_{\phi}))$ be the map induced by restriction. The essential image of $r^{*}$ in the derived category $D_{+}(\mathfrak{L}_{\Delta^{*}}(\Omega_{\phi}))$ can be identified with $\textnormal{\textbf{LAX}}_{\Delta^{*}}(\Gamma_{\phi})$.
 \end{lemma}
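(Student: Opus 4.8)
The plan is to identify, explicitly, which objects of the derived category $D_{+}(\mathfrak{L}_{\Delta^{*}}(\Omega_{\phi}))$ arise from complexes over $\mathbf{Sh}_{\mathcal{R}|_{U}}$ via $r^{*}$, and then to match this description against the definition of $\mathbf{LAX}_{\Delta^{*}}(\Gamma_{\phi})$. Recall from Lemma~\ref{lem6.1} that restriction identifies $\mathbf{Sh}_{\mathcal{R}|_{U}}$ with the cartesian objects $\mathfrak{L}_{\Delta^{*}}^{\mathrm{cart}}(\Omega_{\phi})$; passing to chain complexes, $r^{*}$ factors through the subcategory $\mathrm{Ch}_{+}(\mathfrak{L}_{\Delta^{*}}^{\mathrm{cart}}(\Omega_{\phi}))$ of levelwise-cartesian complexes. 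So the first step is to observe that the essential image of $r^{*}$ in $D_{+}$ is exactly the full subcategory of objects represented by complexes whose structure maps $u_{\alpha,\beta}^{\theta}$ are isomorphisms in each degree, up to quasi-isomorphism over $\mathfrak{L}_{\Delta^{*}}(\Omega_{\phi})$.

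The key step is then: a complex $x = (x_{\alpha}, u_{\alpha,\beta}^{\theta})$ over $\mathfrak{L}_{\Delta^{*}}(\Omega_{\phi})$ (equivalently, over $\Gamma_{\phi}$, using the identification $\mathrm{Ch}_{+}(\mathfrak{L}_{\Delta^{*}}(\Omega_{\phi})) \cong \mathfrak{L}_{\Delta^{*}}(\Gamma_{\phi})$ noted before the statement) is quasi-isomorphic to one with cartesian structure maps if and only if each $u_{\alpha,\beta}^{\theta}$ is a quasi-isomorphism of complexes of $\mathcal{R}|_{\tilde{C}(\phi)_{\beta}}$-modules — which is precisely the condition defining $\mathbf{LAX}_{\Delta^{*}}(\Gamma_{\phi})$ (weak equivalences in the levelwise/projective model structure on $\mathrm{Ch}_{+}$ of sheaves of modules being the quasi-isomorphisms). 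One direction is immediate: cartesian structure maps are isomorphisms, hence quasi-isomorphisms, and "being in $\mathbf{LAX}$" only depends on the weak-equivalence class, so everything in the essential image of $r^{*}$ lands in $\mathbf{LAX}_{\Delta^{*}}(\Gamma_{\phi})$. For the converse I would take $x \in \mathbf{LAX}_{\Delta^{*}}(\Gamma_{\phi})$ and build a cartesian model: set $x'_{\alpha} := x_{\delta}|_{\tilde{C}(\phi)_{\alpha}}$ where $\delta$ is the initial object $[\mathbf{0}]$ of $\Delta^{*}$ (or, more carefully, use that $\Delta^{*}$ has $[\mathbf{0}]$ as an initial object so every $x_{\alpha}$ receives a structure map from the restriction of $x_{[\mathbf{0}]}$), and use the $u_{\alpha,\beta}^{\theta}$, which are quasi-isomorphisms, to produce a zig-zag of levelwise quasi-isomorphisms in $\mathfrak{L}_{\Delta^{*}}(\Gamma_{\phi})$ between $x$ and the cartesian object $r^{*}(x_{[\mathbf{0}]})$. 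Lemma~\ref{lem6.2} (enough injectives, hence a well-behaved derived category) guarantees these zig-zags become genuine isomorphisms in $D_{+}(\mathfrak{L}_{\Delta^{*}}(\Omega_{\phi}))$.

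The main obstacle I anticipate is the converse direction, and specifically being careful that the restriction maps $\Omega_{\phi}(\theta)$ along the face/degeneracy maps of the Cech nerve are exact enough that quasi-isomorphisms are preserved and that the candidate cartesian object $r^{*}(x_{[\mathbf{0}]})$ is genuinely quasi-isomorphic to $x$ through maps that are natural in $\Delta^{*}$ — i.e. the cocycle condition $u^{\delta\theta}_{\alpha,\gamma} = u^{\delta}_{\beta,\gamma}\,\Omega_{\phi}(\delta)(u^{\theta}_{\alpha,\beta})$ must be used to check that the comparison zig-zag assembles into a morphism of the lax limit, not just a degreewise collection of quasi-isomorphisms. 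Restriction of sheaves along a morphism of sites is exact (it has a left adjoint, extension by the empty functor being replaced by the appropriate left Kan extension, but more simply restriction is computed sectionwise and is exact on module sheaves), so preservation of quasi-isomorphisms is not an issue; the real bookkeeping is the naturality/cocycle compatibility, together with checking that "essential image in $D_{+}$" is computed correctly, i.e. that a quasi-isomorphism in $\mathrm{Ch}_{+}(\mathfrak{L}_{\Delta^{*}}(\Omega_{\phi}))$ is detected levelwise (which holds since homology in $\mathfrak{L}_{\Delta^{*}}(\Omega_{\phi})$ is computed levelwise, filtered colimits and finite limits being sectionwise as observed in the proof of Lemma~\ref{lem6.2}). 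Once these compatibilities are in hand, the two full subcategories of $D_{+}(\mathfrak{L}_{\Delta^{*}}(\Omega_{\phi}))$ — the essential image of $r^{*}$, and the image of $\mathbf{LAX}_{\Delta^{*}}(\Gamma_{\phi})$ — have the same objects, which is the claim.
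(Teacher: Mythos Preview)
Your converse direction has a genuine gap. The object $[\mathbf{0}]$ is \emph{terminal} in $\Delta$, not initial: there are $n+1$ maps $[\mathbf{0}]\to[\mathbf{n}]$, corresponding to the $n+1$ projections $\tilde{C}(\phi)_{n}\to V$. So there is no single canonical structure map from a restriction of $x_{[\mathbf{0}]}$ to $x_{[\mathbf{n}]}$, and more seriously, the expression $r^{*}(x_{[\mathbf{0}]})$ does not typecheck: $r^{*}$ consumes complexes over $U$, while $x_{[\mathbf{0}]}$ lives over $V=\tilde{C}(\phi)_{0}$. If instead you try to build a cartesian object by setting $x'_{[\mathbf{n}]}$ to be the pullback of $x_{[\mathbf{0}]}$ along one chosen projection, you do not obtain a cartesian object: cartesianness demands that the transition maps $u^{\theta}$ be isomorphisms for \emph{every} $\theta$, in particular for both projections $V\times_{U}V\rightrightarrows V$, and the LAX hypothesis only gives you a quasi-isomorphism between $p_{1}^{*}x_{[\mathbf{0}]}$ and $p_{2}^{*}x_{[\mathbf{0}]}$ through $x_{[\mathbf{1}]}$. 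This is exactly the descent problem you are trying to solve, so the construction is circular. The cocycle-compatibility worry you flag is not just bookkeeping; it is the whole content of the lemma.

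The paper's argument supplies the missing idea. Rather than build a cartesian model by hand, it uses the right adjoint $r_{*}$ and shows that for $E\in\mathbf{LAX}_{\Delta^{*}}(\Gamma_{\phi})$ the derived counit $r^{*}\mathbf{R}r_{*}(E)\to E$ is a quasi-isomorphism. The key observation is that the \emph{homology objects} $H_{i}(E)$ are honestly cartesian (the $u^{\theta}$ being quasi-isomorphisms forces isomorphisms on homology), so Lemma~\ref{lem6.1} handles complexes with homology concentrated in a single degree. One then climbs to bounded complexes by induction on the length of the homological range via the distinguished triangle $H_{n}(E)[-n]\to\tau_{\le n-1}E\to E\to H_{n}(E)[1-n]$, and finally to unbounded $E$ by comparing with $\tau_{\le n}E$ and checking that $\mathbf{R}r_{*}$ commutes well enough with these truncations. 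This d\'evissage through homology is where Lemma~\ref{lem6.1} actually gets used, and it is what replaces your attempted direct construction.
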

  
 \begin{proof} (c.f. \cite[5bis 2.2.7]{SGA4}).
Recall that $\mathrm{Ch}_{+}(\mathfrak{L}_{\Delta^{*}}(\Omega_{\phi}) )$ can be identified with $\mathfrak{L}_{\Delta^{*}}(\Gamma_{\phi})$.  
 Note that restriction of sites $r^{*}$ has a right adjoint $r_{*}$. 
It suffices to show that for $E \in \textbf{LAX}_{\Delta^{*}}(\Gamma_{\phi}))$, the derived counit 
$$
r^{*}\textbf{R}(r_{*})(E) \rightarrow E
$$
is a quasi-isomorphism. 

We will first assume that $E$ has bounded homology, and proceed by induction on the length $l$ of the interval in which $H_{i}(E) \neq 0$. 
First consider the case $l = 1$. In this case, $H_{i}(E)[-i] \cong E$ in the derived category, and $H_{i}(E)$ is a cartesian object of $\mathfrak{L}_{R} (\Omega_{R})$. Thus, the result follows from Lemma~\ref{lem6.1}. 

In general, suppose that $n$ is the largest integer such that $H_{n}(E) \neq 0$. We have a distinguished triangle
$$
H_{n}(E)[-n] \rightarrow \tau_{\le n-1}(E) \rightarrow E  \rightarrow H_{n}(E)[1-n],
$$ 
where $\tau_{\le n}$ is the `intelligent' truncation at level n that preserves homology. The fact that the derived counit is an isomorphism on the first and third terms implies that it is an isomorphism on $E$. 
  
In general, suppose that we have a complex $E$. We want to show that the counit map induces an isomorphism
$ H_{n}(r^{*}\textbf{R}(r_{*})E) \rightarrow H_{n}(E)$ for each $n$.
  We have a diagram:
   $$
      \xymatrix
      {
   r^{*}\textbf{R}(r_{*})(\tau_{\le n}E)   \ar[d] \ar[r]_>>>>>>{\simeq} &  \tau_{\le n}E \ar[d] \\
    r^{*}\textbf{R}(r_{*})(E)  \ar[r] & E 
      }
   $$
    By the two out of three property, it suffices to show that 
    $$ H_{n-1}(r^{*}\textbf{R}(r_{*})(\tau_{\le n}(E))) \rightarrow H_{n-1}(r^{*}\textbf{R}(r_{*})(E))$$ 
      is an isomorphism for each $n$. The functor $r^{*}$ preserves and reflects quasi-isomorphisms. Thus, it suffices to show that $H_{n-1}(\textbf{R}(r_{*})(\tau_{\le n}E)) \rightarrow H_{n-1}(\textbf{R}(r_{*})(E) )$ is an isomorphism.  But we have a distinguished triangle 
 $$
\tau_{\le n}(E) \rightarrow E  \rightarrow (E / \tau_{\le n} E) \rightarrow \tau_{\le n}(E)[1],
$$
    where $(E / \tau_{\le n} E)$ is zero in degree $\le n$. $H_{n}(E/ \tau_{\le n}E) \cong 0$ for $i \le n$. It follows from the long exact sequence of homology that $ H_{n-1}(r^{*}\textbf{R}(r_{*})(\tau_{\le n}(E))) \rightarrow H_{n-1}(r^{*}\textbf{R}(r_{*})(E))$ is an isomorphism.

      \end{proof}

\begin{theorem}\label{thm6.4} \textnormal{(}see \textnormal{\cite[Lemma 1.2]{JardineChain}}\textnormal{)}.
There is a cofibrantly generated, proper, simplicial model structure on $s{\mathbf{Sh}}_{\mathcal{R}}$ in which the weak equivalences are the local weak equivalences and the fibrations are the injective fibrations.
The simplicial hom is the usual simplicial hom for the Jardine model structure. 

The Dold-Kan correspondence
$$
N :s\mathbf{Sh}_{\mathcal{R}} \rightleftarrows \mathrm{Ch}_{+}(\mathbf{Sh}_{\mathcal{R}}) : \Gamma
$$
\noindent
and the above model structure induce a model structure on $ \mathrm{Ch}_{+}(\mathbf{Sh}_{\mathcal{R}})$ in which the weak equivalences are the quasi-isomorphisms. 
\end{theorem}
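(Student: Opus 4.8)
The plan is to build the model structure on $s\mathbf{Sh}_{\mathcal{R}}$ by the same bounded-cofibration technique used for the Jardine model structure on simplicial sheaves of sets, and then transport it along the Dold--Kan equivalence. First I would record that $\mathbf{Sh}_{\mathcal{R}}$ is a Grothendieck abelian category, that the forgetful functor $U : s\mathbf{Sh}_{\mathcal{R}} \rightarrow s\mathbf{Sh}(\mathscr{C})$ has a left adjoint (sectionwise free $\mathcal{R}$-module followed by sheafification), and that $U$ preserves and reflects monomorphisms, colimits, and finite limits. I would then declare a map $f$ in $s\mathbf{Sh}_{\mathcal{R}}$ to be a \emph{weak equivalence} (resp. \emph{cofibration}) iff $U(f)$ is a local weak equivalence (resp. a monomorphism), and let the \emph{injective fibrations} be the maps with the right lifting property against trivial cofibrations. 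Two-out-of-three, the retract axiom, and one half of each lifting axiom are then formal.

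The real content is the two factorization axioms, obtained by a transfinite small-object argument; the step I expect to be the main obstacle is the \emph{bounded cofibration lemma} in the module setting. Concretely: there is an infinite cardinal $\kappa$, depending on $\mathscr{C}$ and on the cardinality of $\mathcal{R}$, such that for every trivial cofibration $A \rightarrow B$ and every subobject $C \subseteq B$ of cardinality less than $\kappa$, there is a subobject $D$ with $C \subseteq D \subseteq B$, of cardinality less than $\kappa$, with $A \cap D \rightarrow D$ again a trivial cofibration. This is proved exactly as in \cite{local} for simplicial sheaves of sets: one uses the Boolean localization $p^{*}L^{2}$ to detect local weak equivalences, together with exactness of filtered colimits in $\mathbf{Sh}_{\mathcal{R}}$, and constructs $D$ as a countable union. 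From the lemma one extracts a \emph{set} of generating trivial cofibrations; the generating cofibrations are the inclusions $\partial\Delta^{n} \rightarrow \Delta^{n}$ tensored with the generators of $\mathbf{Sh}_{\mathcal{R}}$. The small-object argument then yields both factorizations, the remaining lifting axiom follows by the retract argument, and this simultaneously shows the model structure is cofibrantly generated with fibrations Jardine's injective fibrations.

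For the remaining properties: left properness holds because the cofibrations are monomorphisms and the pushout of a local weak equivalence of simplicial $\mathcal{R}$-modules along a monomorphism is again a local weak equivalence — one checks this after applying $p^{*}L^{2}$, reducing to the sectionwise statement, which follows by comparing the short exact sequences $0 \rightarrow A \rightarrow B \rightarrow B/A \rightarrow 0$ and $0 \rightarrow C \rightarrow B\cup_{A}C \rightarrow B/A \rightarrow 0$ and applying the five lemma to the associated long exact homotopy sequences. Right properness is dual, using that injective fibrations are in particular local Kan fibrations on underlying simplicial sheaves. The tensoring $M \otimes K = M \otimes_{\mathbb{Z}} \mathbb{Z}[K]$ and its adjoint cotensoring make $s\mathbf{Sh}_{\mathcal{R}}$ a simplicial category whose underlying $\mathrm{sSet}$-enrichment is the usual Jardine simplicial hom, and the pushout-product axiom is the standard one, using that $K \mapsto \mathcal{R}\otimes_{\mathbb{Z}}\mathbb{Z}[K]$ is left Quillen and compatible with the tensoring.

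Finally I would transport along Dold--Kan. Since $N$ and $\Gamma$ are mutually inverse equivalences of categories, declaring a map in $\mathrm{Ch}_{+}(\mathbf{Sh}_{\mathcal{R}})$ to be a weak equivalence, cofibration, or fibration iff its image under $\Gamma$ is such defines a model structure for which $(N,\Gamma)$ is a trivial Quillen equivalence. To identify the weak equivalences with the quasi-isomorphisms, use the natural isomorphism $\pi_{n}(A) \cong H_{n}(NA)$ of presheaves for a simplicial $\mathcal{R}$-module $A$; since sheafification is exact, this passes to the associated local homotopy and homology sheaves, so $f$ is a local weak equivalence iff $Nf$ induces isomorphisms on all local homology sheaves, i.e. is a quasi-isomorphism.
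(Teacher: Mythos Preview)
The paper does not prove this theorem at all: it is stated with the parenthetical citation ``(see \cite[Lemma 1.2]{JardineChain})'' and no proof environment follows. So there is no in-paper argument to compare your proposal against.

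That said, your outline is the standard route and matches the approach in the cited reference and in \cite{local}: one lifts the Jardine model structure on simplicial sheaves of sets to simplicial $\mathcal{R}$-modules via the free/forgetful adjunction, proves a bounded cofibration lemma in the module setting to obtain a set of generating trivial cofibrations, runs the small-object argument, checks properness via Boolean localization and exact sequences, and then transports across the Dold--Kan equivalence. One small correction: your generating cofibrations are not quite right as stated. Tensoring $\partial\Delta^{n}\hookrightarrow\Delta^{n}$ with generators of $\mathbf{Sh}_{\mathcal{R}}$ does not produce enough monomorphisms to detect all cofibrations in a sheaf category; in the injective (Jardine-style) approach one instead takes a \emph{set} of $\kappa$-bounded monomorphisms as generating cofibrations, obtained from the same cardinality bound as in the bounded cofibration lemma. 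With that adjustment the argument is sound.
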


Consider the diagram of model categories $s\mathrm{SH}_{\phi}$ defined by 
$$
[\textbf{n}] \mapsto s\textbf{Sh}_{\mathcal{R}|_{\tilde{C}(\phi)_{n}}} 
$$
and endowed with the model structure of Theorem~\ref{thm6.4}. This is a presheaf of left Quillen functors; restriction preserves local weak equivalences (\cite[Corollary 5.26]{local}) and monomorphisms. Thus, the right adjoint of restriction preserves injective fibrations and trivial injective fibrations. 

Recall from Section~\ref{sec7} the model structure on $\mathfrak{L}_{\Delta^{*}} (\mathrm{sSH}_{ \phi})$. Then it is clear from the definition that there is a Quillen adjunction  
$$
(r_{s\mathrm{SH}}^{\phi})^{*}  : (s\textbf{Sh}_{\mathcal{R}|_{U}}) \rightleftarrows \mathfrak{L}_{\Delta^{*}}(s\mathrm{SH}_{\phi}) : (r_{s\mathrm{SH}}^{\phi})_{*},
$$
where the left adjoint is induced by restriction. 

\begin{lemma}\label{lem6.5}
Suppose that $\sigma \in \mathbf{LAX}_{\Delta^{*}}(s\mathrm{SH}_{\phi})$.Then the derived counit map 
$$
\textbf{L}((r_{s\mathrm{SH}}^{\phi})^{*} )\textbf{R}((r_{s\mathrm{SH}}^{\phi})_{*})(\sigma) \rightarrow  \sigma
$$
is an isomorphism in  $\mathrm{Ho}(\mathfrak{L}_{\Delta^{*}}(s\mathrm{SH}_{\phi}))$.
\end{lemma}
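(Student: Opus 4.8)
The plan is to deduce Lemma~\ref{lem6.5} from Lemma~\ref{lem6.3} by transporting the entire situation across the Dold--Kan correspondence. Recall from Theorem~\ref{thm6.4} that the model structure on $\mathrm{Ch}_{+}(\mathbf{Sh}_{\mathcal{T}})$ is precisely the one for which $N : s\mathbf{Sh}_{\mathcal{T}} \rightleftarrows \mathrm{Ch}_{+}(\mathbf{Sh}_{\mathcal{T}}) : \Gamma$ is an equivalence of categories carrying local weak equivalences to quasi-isomorphisms and reflecting them. Since $N$ and $\Gamma$ are constructed degreewise and the restriction functors along the Cech nerve $\tilde{C}(\phi)$ are exact, this equivalence commutes with every restriction functor occurring in the diagram. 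Applying it in each simplicial degree therefore yields equivalences of categories $\mathfrak{L}_{\Delta^{*}}(s\mathrm{SH}_{\phi}) \cong \mathrm{Ch}_{+}(\mathfrak{L}_{\Delta^{*}}(\Omega_{\phi})) \cong \mathfrak{L}_{\Delta^{*}}(\Gamma_{\phi})$ and $s\mathbf{Sh}_{\mathcal{R}|_{U}} \cong \mathrm{Ch}_{+}(\mathbf{Sh}_{\mathcal{R}|_{U}})$, under which the restriction adjunction $(r_{s\mathrm{SH}}^{\phi})^{*} \dashv (r_{s\mathrm{SH}}^{\phi})_{*}$ is identified with the chain-level restriction adjunction $r^{*} \dashv r_{*}$ of Lemma~\ref{lem6.3}; indeed the two left adjoints agree because $r^{*}$ commutes with colimits and with $N$, and the right adjoints then agree by uniqueness of adjoints.

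Next I would verify that this identification is homotopical. Because $N$ preserves and reflects levelwise local weak equivalences, it induces an equivalence $\mathrm{Ho}(\mathfrak{L}_{\Delta^{*}}(s\mathrm{SH}_{\phi})) \simeq \mathrm{Ho}(\mathfrak{L}_{\Delta^{*}}(\Gamma_{\phi}))$; and since homology in the lax limit is computed sectionwise, the weak equivalences of the lax-limit model category $\mathfrak{L}_{\Delta^{*}}(\Gamma_{\phi})$ are exactly the quasi-isomorphisms of $\mathrm{Ch}_{+}(\mathfrak{L}_{\Delta^{*}}(\Omega_{\phi}))$, so that this homotopy category is the derived category $D_{+}(\mathfrak{L}_{\Delta^{*}}(\Omega_{\phi}))$, which exists by Lemma~\ref{lem6.2}. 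Under these equivalences the subcategory $\textbf{LAX}_{\Delta^{*}}(s\mathrm{SH}_{\phi})$ corresponds to $\textbf{LAX}_{\Delta^{*}}(\Gamma_{\phi})$, since requiring the structure maps to be weak equivalences becomes requiring them to be quasi-isomorphisms. Finally, the total derived functors match up: $\textbf{L}((r_{s\mathrm{SH}}^{\phi})^{*})$ corresponds to $r^{*}$ (restriction is already exact, so needs no deriving), while $\textbf{R}((r_{s\mathrm{SH}}^{\phi})_{*})$ corresponds to the functor $\textbf{R}r_{*}$ of Lemma~\ref{lem6.3}. Granting this, the derived counit in the statement is carried to the derived counit $r^{*}\textbf{R}(r_{*})(E) \to E$ with $E \in \textbf{LAX}_{\Delta^{*}}(\Gamma_{\phi})$, which the proof of Lemma~\ref{lem6.3} shows to be a quasi-isomorphism; hence the original map is an isomorphism in $\mathrm{Ho}(\mathfrak{L}_{\Delta^{*}}(s\mathrm{SH}_{\phi}))$.

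The main obstacle is precisely this last compatibility: identifying the right derived functor of $(r_{s\mathrm{SH}}^{\phi})_{*}$ taken with respect to the model structure of Theorem~\ref{thm6.4} with the derived pushforward $\textbf{R}r_{*}$ of homological algebra used in Lemma~\ref{lem6.3}. I would handle this by checking that a fibrant object of this model structure on $\mathrm{Ch}_{+}$ is $r_{*}$-acyclic --- equivalently, that bounded-below complexes of injective sheaves are suitable for computing $\textbf{R}r_{*}$ --- and that a fibrant replacement is in particular a quasi-isomorphism, so that the two recipes for deriving $r_{*}$ produce naturally quasi-isomorphic answers. Should this bookkeeping prove awkward, an alternative is to reprove Lemma~\ref{lem6.3} directly in the simplicial setting: replace the intelligent truncations $\tau_{\le n}$ by Postnikov sections of simplicial sheaves, replace the long exact homology sequences by the long exact sequences of local homotopy sheaves, take the base case again from the glueing statement Lemma~\ref{lem6.1}, and use that $(r_{s\mathrm{SH}}^{\phi})^{*}$ reflects local weak equivalences because $\{\tilde{C}(\phi)_{0} \to U\}$ is already a covering.
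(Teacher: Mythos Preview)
Your approach is correct and lands on the same reduction as the paper: transport everything across Dold--Kan and invoke Lemma~\ref{lem6.3}. The difference lies in how the identification $\mathrm{Ho}(\mathfrak{L}_{\Delta^{*}}(s\mathrm{SH}_{\phi})) \simeq D_{+}(\mathfrak{L}_{\Delta^{*}}(\Omega_{\phi}))$ is established. Rather than arguing directly, as you do, that Dold--Kan carries levelwise local weak equivalences to quasi-isomorphisms and then localizing, the paper first proves that fibrations in the lax-limit model structure are \emph{levelwise} injective fibrations (by lifting against the trivial cofibrations $\mathfrak{U}_{n,A} \to \mathfrak{U}_{n,B}$ built from the objects of Lemma~\ref{lem6.2}), deduces right properness from the right properness of Theorem~\ref{thm6.4}, and then appeals to the cocycle-category description of the homotopy category to match it with the derived category. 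Your direct transport is shorter and avoids this detour; what the paper's extra work buys is the concrete fact that fibrant objects are levelwise injective-fibrant, which in principle makes the comparison of the model-theoretic $\mathbf{R}((r_{s\mathrm{SH}}^{\phi})_{*})$ with the homological $\mathbf{R}r_{*}$ more tangible --- though the paper, like you, ultimately leaves that compatibility implicit in the sentence ``the result follows from Lemma~\ref{lem6.3}.'' Your explicit identification of this step as the main obstacle, and your proposed resolution via $r_{*}$-acyclicity of fibrant objects (or the alternative Postnikov-tower rerun of Lemma~\ref{lem6.3}), is a point of rigor the paper does not address.
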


\begin{proof}

Suppose that $m : A \rightarrow B$ is a trivial cofibration in $s\mathrm{SH}_{\phi}([\textbf{n}])$. Then this induces a trivial cofibration $m' : \mathfrak{U}_{n, A} \rightarrow \mathfrak{U}_{n, B}$, where $\mathfrak{U}_{n, (-)}$ is as in Lemma~\ref{lem6.2}. 

Every fibration $ (x_{[\textbf{n}]}, \theta^{\alpha}_{[\textbf{n}], [\textbf{m}]}) \rightarrow (y_{[\textbf{n}]}, \gamma^{\alpha}_{[\textbf{n}], [\textbf{m}]})$ has the right lifting property with respect to $m'$, and hence each  $x_{[\textbf{n}]} \rightarrow y_{[\textbf{n}]}$ is a fibration for the model structure on $\mathrm{sSH}_{\phi}([\textbf{n}])$.  

It follows from the last sentence of the preceding paragraph, and the right properness of the model structure of Theorem\ref{thm6.4}, that the model structure of $\mathfrak{L}_{\Delta^{*}}(\mathrm{sSH}_{\phi})$ is right proper. Its homotopy category thus admits a description in terms of a `cocycle' category (\cite[Theorem 6.5]{local}). Via the Dold-Kan correspondence, this description recovers the derived category $D_{+}(\mathfrak{L}_{\Delta^{*}}(\Omega_{\phi}))$. The result follows from Lemma~\ref{lem6.3}. 
\end{proof}

We write $\mathcal{R} : s\textbf{Pre}(\mathscr{C}) \rightarrow s\textbf{Sh}_{\mathcal{R}}$ for the free $\mathcal{R}$-module functor, which is left adjoint to the forgetful functor.
 Given two simplicial $\mathcal{R}$-modules $N, M$, we write $N \otimes M$ for their tensor product.  Given a simplicial presheaf $K$ and a simplicial $\mathcal{R}$-module $M$, we write $K \otimes M$ for $\mathcal{R}(K) \otimes M$. 

\begin{definition}\label{def6.6}
A \textbf{spectrum in chain complexes of $\mathcal{R}$-modules} $A$ consists of non-negatively graded chain complexes $A^{n}$ of $\mathcal{R}$-modules, for all $n \ge 0,$ together with homomorphisms $\sigma : A^{n}[-1] \rightarrow A^{n+1}$ called \textbf{bonding maps}. 
A map $f : A \rightarrow B$ of spectra in chain complexes consists of maps $f: A^{n} \rightarrow B^{n}$ such that 
$$
\xymatrix
{
A^{n}[-1] \ar[rr]_{f[-1]} \ar[d]_{\sigma} && B^{n}  [-1] \ar[d]_{\sigma} \\
A^{n+1} \ar[rr]_{f} && B^{n+1}
}
$$

A \textbf{simplicial $\mathcal{R}$-module spectrum} $A$ consists of simplicial $\mathcal{R}$-modules sheaves $A^{n}, n \ge 0$, together with simplicial $\mathcal{R}$-module homomorphisms $\sigma : S^{1} \otimes A^{n} \rightarrow A^{n+1}$, called \textbf{bonding maps}. A map of $f : A \rightarrow B$ of simplicial $\mathcal{R}$-modules consists of maps $f : A^{n} \rightarrow B^{n}$ such that
$$
\xymatrix
{
S^{1} \otimes A^{n}  \ar[r]_{f \otimes id} \ar[d]_{\sigma} & S^{1} \otimes B^{n} \ar[d]_{\sigma} \\
A^{n+1} \ar[r]_{f} & B^{n+1}
}
$$
commutes.
\end{definition}

 We write $ \textbf{Spt}(s\textbf{Sh}_{\mathcal{R}}))$ and $\textbf{Spt}(\mathrm{Ch}_{+}(\textbf{Sh}_{\mathcal{R}}))$ for the categories of simplicial $\mathcal{R}$-module spectra and of spectra in chain complexes of $\mathcal{R}$-modules, respectively. The initial object in $\textbf{Spt}(s\textbf{Sh}_{\mathcal{R}})$  is denoted $\mathbb{S}$ (it is called the sphere spectrum) and satisfies $(\mathbb{S})^{n} = \mathcal{R}(S^{1})^{\otimes n}$ with bonding maps the identity. 

Given a simplicial $\mathcal{R}$-module spectrum $X$ and a simplicial presheaf $K$, we write $K \otimes X$ for the simplicial $\mathcal{R}$-module spectrum that is the sheafification of
$$
( K \otimes X)^{n} =  \mathcal{R}(K) \otimes X^{n}
$$
and whose bonding maps come from those of $X$. Given a simplicial presheaf $K$, write $\Sigma^{\infty}K =  K \otimes \mathbb{S}$.

Given $X \in \textbf{Spt}(s\textbf{Sh}_{\mathcal{R}})$, there is a presheaf of stable homotopy groups $\pi_{n}^{s}(X)$, defined by 
$$
U \mapsto \pi_{n}^{s}(X(U))
$$
(see \cite[pg. 377]{local}).
We say that a map of simplical $\mathcal{R}$-module spectra $f: A \rightarrow B$ is a \textbf{local stable equivalence} iff 
$L^{2} \pi_{n}^{s}(f)$ is an isomorphism for each $n \ge 0$. We say $f$ is a \textbf{local strict equivalence} iff each $A^{n} \rightarrow B^{n}$ is a local weak equivalence.

\begin{theorem}\label{thm6.7}\textnormal{\cite[Lemma 3.1 and Theorem 3.6]{JardineChain}}.
There is a cofibrantly generated, proper simplicial model structure on  $\mathbf{Spt}(s\mathbf{Sh}_{\mathcal{R}})$ in which the weak equivalences are the local stable equivalences. The cofibrations are maps $f: A \rightarrow B$ such that:
\begin{enumerate}
\item{$A^{0} \rightarrow B^{0}$ is a cofibration in the model structure of \textnormal{Theorem~\ref{thm6.4}}.}
\item{$(S^{1} \otimes B^{n}) \cup_{(S^{1} \otimes A^{n})} A^{n+1} \rightarrow B^{n+1}$ is a cofibration for the model structure of \textnormal{Theorem~\ref{thm6.4}} for all $n \ge 0$.}
\end{enumerate} 
The simplicial hom is given by $\mathbf{hom}(A, B)_{n} = \mathrm{hom}(\Delta^{n} \otimes A, B)$. We call this the \textbf{stable model structure} on  $\mathbf{Spt}(s\mathbf{Sh}_{\mathcal{R}})$. 

There also exists a cofibrantly generated, proper simplicial model structure, called the \textbf{strict model structure}, in which the weak equivalences are the local strict equivalences and the cofibrations are the same as in the stable model structure. The simplicial hom is the same as well. 
\end{theorem}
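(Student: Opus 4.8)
The plan is to follow the standard Bousfield--Friedlander strategy for constructing stable model structures on spectra, adapted to simplicial $\mathcal{R}$-module sheaves exactly as in \cite{JardineChain}: first build the strict (levelwise) model structure, then stabilize. The strict structure is cited incidentally in the statement but is really the starting point, so I would establish it first and record it at the end.

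For the strict model structure, the key observation is that $S^1 \otimes (-) = \mathcal{R}(S^1) \otimes_{\mathcal{R}} (-)$ is tensoring with a fixed (flat, cofibrant) simplicial $\mathcal{R}$-module, hence admits a right adjoint $\Omega(-) = \underline{\mathrm{hom}}_{s\mathbf{Sh}_{\mathcal{R}}}(\mathcal{R}(S^1), -)$; thus $\mathbf{Spt}(s\mathbf{Sh}_{\mathcal{R}})$ is equivalent to the category of sequences of adjoint bonding maps $A^n \to \Omega A^{n+1}$, and each evaluation functor $\mathrm{Ev}_n$ has a left adjoint $F_n$ (the free spectrum functor, with $\mathrm{Ev}_m F_n(C)$ either $0$ or $(S^1)^{\otimes(m-n)} \otimes C$). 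I would then take the generating cofibrations and generating trivial cofibrations of the strict structure to be $\bigcup_n F_n(I)$ and $\bigcup_n F_n(J)$, where $I, J$ generate the cofibrantly generated model structure of Theorem~\ref{thm6.4}, and verify Kan's recognition criterion: the small object argument applies since everything is locally presentable, and relative $\bigcup_n F_n(J)$-cell complexes are levelwise local weak equivalences because each $\mathrm{Ev}_m F_n$ is left Quillen for the structure of Theorem~\ref{thm6.4} (it is zero or tensoring with a cofibrant object). The explicit description of the strict cofibrations in the statement is the usual latching-object unwinding of $\bigcup_n F_n(I)$-cofibrations. Left and right properness descend from Theorem~\ref{thm6.4} since strict cofibrations, fibrations, and weak equivalences are all detected levelwise, and the simplicial structure with $\mathbf{hom}(A,B)_n = \mathrm{hom}(\Delta^n \otimes A, B)$ is inherited from the levelwise tensoring over $s\mathbf{Sh}_{\mathcal{R}}$.

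For the stable model structure, I would introduce the stabilization functor $A \mapsto QA := \mathrm{colim}_k \, \Omega^k\bigl(G(A)[k]\bigr)$, with $G$ a functorial strictly fibrant replacement and $A[k]$ the $k$-fold shift, together with $\eta_A : A \to QA$; then $QA$ is a strictly fibrant spectrum whose adjoint bonding maps are local weak equivalences (a local $\Omega$-spectrum). One declares $f$ a \emph{stable equivalence} if $Qf$ is a strict equivalence and a \emph{stable fibration} if it has the right lifting property against maps that are simultaneously strict cofibrations and stable equivalences; equivalently, this structure is the left Bousfield localization of the strict structure at the maps $F_{n+1}(S^1 \otimes C) \to F_n(C)$, with $C$ running over domains and codomains of $I$, which exists since the strict structure is combinatorial and left proper and whose fibrant objects are the local $\Omega$-spectra. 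To match these weak equivalences with the local stable equivalences of the statement (the maps $f$ with $L^2\pi^s_n(f)$ an isomorphism for all $n$), I would show $\eta_A$ induces an isomorphism on $L^2\pi^s_\bullet$ and that $Q$ detects isomorphisms of sheafified stable homotopy groups; this is a stalkwise computation, reduced via Lemma~\ref{lem2.4} and Lemma~\ref{lem2.9} to the classical Bousfield--Friedlander statement for simplicial abelian groups over the Boolean cover $\mathscr{B}$, where stable homotopy groups and the stabilization colimit behave as in the ordinary setting.

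The main obstacle I anticipate is precisely this last identification: showing the abstractly defined (localized) stable equivalences coincide with the concretely defined local stable equivalences. This requires controlling the stabilization functor $Q$ with respect to sheafification and passage along $p^{*}L^{2}$ --- in particular that $Q$ commutes with these up to local strict equivalence, that it sends every spectrum to a local $\Omega$-spectrum, and that these two facts together characterize the stable equivalences and the stably fibrant objects. Once that analysis is in place, the remaining points (cofibrant generation of the stable structure via the localized generating trivial cofibrations, properness, and the simplicial axiom) are routine consequences of the corresponding properties from Theorem~\ref{thm6.4} and the formal behaviour of Bousfield localization.
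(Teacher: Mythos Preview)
The paper does not prove this statement: Theorem~\ref{thm6.7} is quoted verbatim from \cite[Lemma 3.1 and Theorem 3.6]{JardineChain} with no accompanying argument, so there is nothing in the paper to compare your proposal against. Your outline is a faithful sketch of the Bousfield--Friedlander stabilization that Jardine carries out in the cited reference (strict structure via free-spectrum generators, then left Bousfield localization at the shift maps, with the identification of stable equivalences handled by a $Q$-functor argument), so in spirit you are reproducing the proof that the paper simply imports.

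One small caution on your sketch: the stable homotopy group condition in the paper is stated only for $n \ge 0$, but for unbounded complexes/spectra the local stable equivalences must of course be isomorphisms on $L^{2}\pi_{n}^{s}$ for all integers $n$; your $Q$-functor analysis should track this. Also, your phrase ``stalkwise computation'' should be replaced by the Boolean-localization reduction you mention immediately after, since the ambient topos is not assumed to have enough points. Neither of these is a real gap in your strategy.
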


The purpose of introducing the category of simplicial $\mathcal{R}$-module spectra is to provide a simplicial model category whose homotopy category is equivalent to the derived category of unbounded complexes. Following \cite[Sections 2 and 3]{JardineChain}, we introduce an appropriate model structure on $\textbf{Spt}(\mathrm{Ch}_{+}(\textbf{Sh}_{\mathcal{R}}))$, and describe how it is Quillen equivalent to the model structure of Theorem~\ref{thm6.7} and an appropriate model structure on $\mathrm{Ch}(\textbf{Sh}_{\mathcal{R}})$. 
\\

We have an adjoint pair 
$$
S : \textbf{Spt}(\mathrm{Ch}_{+}(\textbf{Sh}_{\mathcal{R}})) \rightleftarrows  \mathrm{Ch}(\textbf{Sh}_{\mathcal{R}}) : T
$$
where $T(A)^{n} = \tau_{\ge -n}(A)[-n]$ and the bonding map $\sigma : T(A)^{n}[-1] \rightarrow T(A)^{n+1}$ is the natural map $$(\tau_{\ge -n}(A)[-n])[-1] = \tau_{\ge -n}(A)[-n-1] \rightarrow \tau_{\ge -n-1}(A)[-n-1].$$ Moreover, $S(B)$ is the filtered colimit of the diagram:
$$
A^{0} \xrightarrow{\sigma[1]} A^{1}[1] \xrightarrow{\sigma[2]} A^{2}[2] \xrightarrow{\sigma[3]} \cdots A^{n}[n] \xrightarrow{\sigma[n+1]} \cdots. 
$$

We call a morphism $f$ of $\textbf{Spt}(\mathrm{Ch}_{+}(\textbf{Sh}_{\mathcal{R}}))$ a \textbf{local stable equivalence} iff $S(f)$ is a quasi-isomorphism.

\begin{theorem}\label{thm6.8} \textnormal{(}see \textnormal{\cite[Theorems 2.1 and 2.6]{JardineChain}}\textnormal{)}.
There is a right proper model structure on $\mathbf{Spt}(\mathrm{Ch}_{+}(\mathbf{Sh}_{\mathcal{R}}))$, in which the weak equivalences. The cofibrations are maps such that 
\begin{enumerate}
\item{$A^{0} \rightarrow B^{0}$ is a cofibration in the model structure of \textnormal{Theorem~\ref{thm6.4}}.}
\item{$ B^{n}[-1] \cup_{ A^{n}[-1]} A^{n+1} \rightarrow B^{n+1}$ is a cofibration for the model structure of \textnormal{Theorem~\ref{thm6.4}} for all $n \ge 0$.}
\end{enumerate}

Moreover, $\mathrm{Ch}(\mathbf{Sh}_{\mathcal{R}})$ has a model structure in which the weak equivalences are quasi-isomorphisms, and the fibrations are maps $f$ such that $T(f)$ is a fibration in the model structure on $\mathbf{Spt}(\mathrm{Ch}_{+}(\mathbf{Sh}_{\mathcal{R}}))$. The adjoint pair $S \dashv T$ forms a Quillen equivalence between these two model structures. 

\end{theorem}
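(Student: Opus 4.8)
This is \cite[Theorems 2.1 and 2.6]{JardineChain}, and the plan is to follow Jardine's argument, which has three ingredients: build the stable model structure on $\mathbf{Spt}(\mathrm{Ch}_{+}(\mathbf{Sh}_{\mathcal{R}}))$ by stabilizing a strict structure, transfer it along $S$ to produce the model structure on $\mathrm{Ch}(\mathbf{Sh}_{\mathcal{R}})$, and then verify that the resulting Quillen pair $S \dashv T$ is a Quillen equivalence. First I would record the analogue for $\mathbf{Spt}(\mathrm{Ch}_{+}(\mathbf{Sh}_{\mathcal{R}}))$ of the strict model structure of Theorem~\ref{thm6.7}: weak equivalences and fibrations are defined levelwise using the model structure of Theorem~\ref{thm6.4}, the cofibrations are the ones described in the statement, and the structure is cofibrantly generated, with generating (trivial) cofibrations $F_{n}(i)$, where $F_{n}$ is the free-spectrum functor left adjoint to $A \mapsto A^{n}$ and $i$ runs over the generating (trivial) cofibrations of Theorem~\ref{thm6.4}. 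One then left Bousfield localizes at the set of comparison maps $\zeta_{n}^{C} : F_{n+1}(C[-1]) \to F_{n}(C)$, with $C$ ranging over the domains and codomains of the generating cofibrations of Theorem~\ref{thm6.4}. Because $S(A) = \mathrm{colim}_{n} A^{n}[n]$ is exactly the colimit that inverts the maps $\zeta_{n}^{C}$, the local equivalences of this localization are precisely the maps $f$ with $S(f)$ a quasi-isomorphism, i.e. the local stable equivalences, while the cofibrations are left unchanged; this gives the first model structure in the form stated.

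The first genuinely non-formal point is right properness. Here I would run Jardine's argument: local stable equivalences are detected by the homology sheaves of $S(-)$, a stable fibration is in particular a levelwise injective fibration, and $S$ preserves pullbacks (filtered colimits commute with finite limits in sheaves), so applying $S$ turns a pullback square of spectra along a stable fibration into one whose homology sheaves sit in long exact sequences. The assertion that the pullback of a stable equivalence along a stable fibration is again a stable equivalence then reduces, via Boolean localization (Lemma~\ref{lem2.9}) and the five lemma, to the classical statement for the Jardine model structure, which is right proper; this is the substance of \cite[Theorem 3.6]{JardineChain} in the present setting.

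For the second model structure I would invoke the transfer (recognition) theorem for cofibrantly generated model categories along $S \dashv T$, declaring $g$ a fibration, resp. weak equivalence, in $\mathrm{Ch}(\mathbf{Sh}_{\mathcal{R}})$ iff $T(g)$ is one in $\mathbf{Spt}(\mathrm{Ch}_{+}(\mathbf{Sh}_{\mathcal{R}}))$. The key identity is $S \circ T \cong \mathrm{id}$: for a complex $A$ one has $S(T(A)) = \mathrm{colim}_{n} \tau_{\ge -n}(A)[-n][n] = \mathrm{colim}_{n} \tau_{\ge -n}(A) = A$, since the intelligent truncations exhaust $A$; hence $T(g)$ is a stable equivalence iff $g$ is a quasi-isomorphism, so the transferred weak equivalences are exactly the quasi-isomorphisms, as claimed. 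The acyclicity hypothesis of the transfer theorem asks that relative cell complexes built from $S(J)$, with $J$ the generating stable trivial cofibrations, be quasi-isomorphisms; this holds because $S$ sends each $F_{n}(j)$ to a shift of $j$ and each $\zeta_{n}^{C}$ to an isomorphism, so $S(J)$ consists of quasi-isomorphisms which are cofibrations, and trivial cofibrations in $\mathrm{Ch}(\mathbf{Sh}_{\mathcal{R}})$ are closed under pushout and transfinite composition, a fact which again reduces to the classical one by Boolean localization.

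Finally, $S \dashv T$ is a Quillen pair by construction, and it is a Quillen equivalence: the derived counit $S((TB)^{\mathrm{cof}}) \to B$ is a quasi-isomorphism because $S$ carries the trivial fibration $(TB)^{\mathrm{cof}} \to TB$ to a quasi-isomorphism (this is the very definition of a stable equivalence) and $S T B \cong B$; and the derived unit $A \to T((SA)^{\mathrm{fib}})$, for $A$ cofibrant, is a stable equivalence because applying $S$ to it gives, via $S T \cong \mathrm{id}$ and $S T S = S$, a weak equivalence of unbounded complexes. The main obstacle in the whole programme is the right properness of the stable model structure on $\mathbf{Spt}(\mathrm{Ch}_{+}(\mathbf{Sh}_{\mathcal{R}}))$: it is the only step that is not formal bookkeeping, and it is precisely what is needed downstream, since the right properness of $\mathfrak{L}_{\Delta^{*}}(s\mathrm{SH}_{\phi})$ and the cocycle-category description of its homotopy category used in the proof of Lemma~\ref{lem6.5} ultimately depend on it.
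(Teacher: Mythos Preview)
The paper does not give a proof of this theorem; it simply records the statement and cites \cite[Theorems 2.1 and 2.6]{JardineChain}. Your sketch therefore goes well beyond what the paper does, and the outline you give---strict levelwise structure, left Bousfield localization at the shift maps to obtain the stable structure with the stated cofibrations and weak equivalences, transfer along $S \dashv T$ using the identity $S T \cong \mathrm{id}$, and verification of the Quillen equivalence via the derived unit and counit---is a correct and standard route to these results.

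One inaccuracy in your closing remark: the right properness of the stable structure on $\mathbf{Spt}(\mathrm{Ch}_{+}(\mathbf{Sh}_{\mathcal{R}}))$ is \emph{not} what feeds into Lemma~\ref{lem6.5}. That lemma concerns the diagram $s\mathrm{SH}_{\phi}$ of simplicial $\mathcal{R}$-module sheaves, and the right properness invoked in its proof is explicitly that of the model structure of Theorem~\ref{thm6.4}, not Theorem~\ref{thm6.8}. In fact the right properness asserted in Theorem~\ref{thm6.8} is not used anywhere in the paper; it is recorded only because it is part of Jardine's original statement.
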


There is also an adjoint pair 
$$
\textbf{N} : \mathbf{Spt}(s\textbf{Sh}_{\mathcal{R}})  \rightleftarrows \mathbf{Spt}(\mathrm{Ch}_{+}(\mathbf{Sh}_{\mathcal{R}})) : \boldsymbol{\Gamma}
$$
where $\boldsymbol{\Gamma}(A)^{n} = \Gamma(A^{n})$. The bonding maps are 
$$
S^{1} \otimes \Gamma(A^{n})  \xrightarrow{\mu} \bar{W}(A^{n}) \cong \Gamma(A^{n}[-1]) \xrightarrow {\Gamma(\sigma)} \Gamma(A^{n+1})
$$
with $\bar{W}$ the classifying complex functor defined on \cite[pg. 271]{GJ2} and $\mu$ the homotopy equivalence of \cite[equation (3.1)]{JardineChain}.
 
\begin{theorem}\label{thm6.9} \textnormal{(}see \textnormal{\cite[Theorem 3.6]{JardineChain}}\textnormal{)}.
The adjoint pair $\textbf{N} \dashv \boldsymbol{\Gamma}$ form a Quillen equivalence between the stable model structure on $\mathbf{Spt}(s\mathbf{Sh}_{\mathcal{R}}))$ and the model structure on $\mathbf{Spt}(\mathrm{Ch}_{+}(\mathbf{Sh}_{\mathcal{R}}))$ from \textnormal{Theorem~\ref{thm6.8}}.
\end{theorem}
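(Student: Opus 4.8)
The plan is to bootstrap the stable comparison from the levelwise Dold--Kan equivalence of Theorem~\ref{thm6.4}, using that the stable model structures on $\mathbf{Spt}(s\mathbf{Sh}_{\mathcal R})$ and $\mathbf{Spt}(\mathrm{Ch}_{+}(\mathbf{Sh}_{\mathcal R}))$ are left Bousfield localizations of ``strict'' (levelwise) model structures obtained from Theorem~\ref{thm6.4} by the usual spectrum construction, and that $\textbf{N}\dashv\boldsymbol{\Gamma}$ carries the localizing data on one side to that on the other.

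First I would check that $\textbf{N}\dashv\boldsymbol{\Gamma}$ is a Quillen adjunction for the strict model structures: since $\boldsymbol{\Gamma}$ is computed levelwise by the right Quillen functor $\Gamma$ of Theorem~\ref{thm6.4}, and the strict fibrations (resp. strict trivial fibrations) of either spectrum category are exactly the levelwise fibrations (resp. levelwise trivial fibrations), $\boldsymbol{\Gamma}$ preserves them. Dually $\textbf{N}$ sends the generating strict cofibrations $F_{n}(i)$ (free level-$n$ spectra on a generating cofibration $i$ of the model structure of Theorem~\ref{thm6.4}) to cofibrations, using that $N$ is left Quillen there together with the natural comparison between $N(S^{1}\otimes(-))$ and the shift $(-)[-1]$ that underlies the bonding-map definition of $\boldsymbol{\Gamma}$. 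Because $N\dashv\Gamma$ is a levelwise equivalence of categories, the derived unit and counit of $\textbf{N}\dashv\boldsymbol{\Gamma}$ are, modulo the coherence relating $\mu$, $\bar W$ and the shift, computed levelwise and are levelwise local weak equivalences; so $\textbf{N}\dashv\boldsymbol{\Gamma}$ is already a Quillen equivalence for the strict structures.

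Next I would descend to the stable structures. Each stable model structure is the left Bousfield localization of the corresponding strict one at the set of stabilization maps $F_{n+1}(S^{1}\otimes C)\to F_{n}(C)$ (resp. $F_{n+1}(C[-1])\to F_{n}(C)$), with $C$ ranging over domains and codomains of generating cofibrations, and the fibrant objects are the $\Omega$-spectra. Under the comparison $N(S^{1}\otimes(-))\simeq (-)[-1]$ the left Quillen functor $\textbf{N}$ of the previous step carries the first set of maps to the second up to strict equivalence; equivalently, $\boldsymbol{\Gamma}$ sends $\Omega$-spectra to $\Omega$-spectra (this is where one uses that the bonding maps of $\boldsymbol{\Gamma}(A)$ are assembled from the homotopy equivalence $\mu$ and from $\Gamma$, and that $\Gamma$ is compatible with the loop functor). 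Standard left Bousfield localization theory then upgrades the strict Quillen equivalence to a Quillen equivalence of the stable structures: for a strictly cofibrant $A\in\mathbf{Spt}(s\mathbf{Sh}_{\mathcal R})$ one verifies that $A\to\boldsymbol{\Gamma}R_{\mathrm{st}}\textbf{N}(A)$ is a local stable equivalence (with $R_{\mathrm{st}}$ a stable fibrant replacement), and for a stably fibrant $B\in\mathbf{Spt}(\mathrm{Ch}_{+}(\mathbf{Sh}_{\mathcal R}))$ that $\textbf{N}\boldsymbol{\Gamma}(B)\to B$ is a local stable equivalence --- the latter being immediate from the previous step, since $\boldsymbol{\Gamma}(B)$ is already stably fibrant and the strict counit is a levelwise, hence stable, equivalence.

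The main obstacle is the compatibility of the two stabilizations: that normalization intertwines the simplicial stabilization (inverting $S^{1}\otimes(-)$, detected by the sheaves $L^{2}\pi_{n}^{s}$) with the chain-level stabilization $S$ (the filtered colimit of shifts, detected by quasi-isomorphism of $S(-)$). Making this precise requires the natural weak equivalence $N(S^{1}\otimes M)\simeq N(M)[-1]$ to be coherent with the bonding-map zig-zag defining $\boldsymbol{\Gamma}$ --- i.e. the interplay of $\mu$, $\bar W$ and the shift --- together with the fact that $\textbf{N}$, being a left adjoint, commutes with the filtered colimit computing $S$. Once this coherence is in hand, a stable homotopy sheaf computation gives $L^{2}\pi_{n}^{s}(A)\cong H_{n}(S(\textbf{N}A))$ locally, which closes the argument; this is exactly the content worked out in \cite{JardineChain}.
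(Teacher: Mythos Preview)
The paper does not give its own proof of this theorem: it is stated with a reference to \cite[Theorem 3.6]{JardineChain} and no argument is supplied. So there is nothing in the paper to compare your proposal against.

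That said, your sketch is a reasonable outline of the standard argument and is essentially how the result is established in the cited source: one first checks that $\textbf{N}\dashv\boldsymbol{\Gamma}$ is a Quillen equivalence for the strict (levelwise) model structures, which is immediate from the Dold--Kan correspondence of Theorem~\ref{thm6.4}, and then passes to the stable localizations by showing that $\boldsymbol{\Gamma}$ preserves $\Omega$-spectra, or equivalently that $\textbf{N}$ takes the stabilizing maps on the simplicial side to those on the chain side up to strict equivalence. You have correctly isolated the one genuine point of content, namely the compatibility of $N(S^{1}\otimes M)$ with $N(M)[-1]$ through the zig-zag involving $\mu$ and $\bar W$ that defines the bonding maps of $\boldsymbol{\Gamma}$; this is exactly what Jardine's argument in \cite{JardineChain} supplies. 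The identification $L^{2}\pi_{n}^{s}(A)\cong H_{n}(S(\textbf{N}A))$ you mention at the end is also the right way to see that $\textbf{N}$ reflects stable equivalences between cofibrant objects.

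One small caution: your appeal to ``standard left Bousfield localization theory'' to transfer a Quillen equivalence across localizations is correct in spirit, but you should be explicit that the strict model structures here are cofibrantly generated and left proper (they inherit this from Theorem~\ref{thm6.4}), so that the stable structures really are left Bousfield localizations with the same cofibrations. Without that, the localization formalism you invoke is not automatically available.
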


As a consequence of the preceding two theorems 

$$
\mathrm{Ho}(\textbf{Spt}(s\textbf{Sh}_{\mathcal{R}}) )
$$
is equivalent to the usual derived category of $\mathcal{R}$-modules. 

Note that restriction of sites along a morphism $\psi : W \rightarrow S$,
 $$
\textbf{Spt}(s\textbf{Sh}_{\mathcal{R}|_{S}}) \rightarrow \textbf{Spt}(s\textbf{Sh}_{\mathcal{R}|_{W}}),
$$ is a left Quillen functor. It clearly preserves local stable equivalences, and it also preserves cofibrations since $\mathrm{sSH}_{\phi}$ is a left Quillen presheaf.  Thus, we have a $\Delta^{*}$-indexed left Quillen presheaf of model categories $\mathrm{SPT}_{\phi}$ defined by $$[n] \mapsto \textbf{Spt}(s\textbf{Sh}_{\mathcal{R}|\tilde{C}(\phi)_{n}} ) = \textbf{Spt}(s\textbf{Sh}_{\mathcal{R}|_{V \times_{U} \cdots \times_{U} V}})$$ with $$\mathrm{SPT}_{\phi}(\theta) : \mathrm{SPT}_{\phi}([\textbf{n}]) \rightarrow \mathrm{SPT}_{\phi}([\textbf{m}])$$ induced by restriction of sites.

\begin{lemma}\label{lem6.10}
Consider the Quillen adjunction 
$$
(r_{\mathrm{SPT}}^{\phi})^{*} :  \mathbf{Spt}( s\mathbf{Sh}_{\mathcal{R}}) \rightleftarrows \mathfrak{L}_{\Delta}(\mathrm{SPT}_{\phi}) : (r_{\mathrm{SPT}}^{\phi})_{*}
$$
where the left adjoint is induced from restriction. The essential image is $\mathbf{LAX}_{\Delta^{*}}(\mathrm{SPT}_{\phi})$.

\end{lemma}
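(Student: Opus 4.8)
The aim is to show, parallel to Lemma~\ref{lem6.3}, that the essential image of $\mathbf{L}((r_{\mathrm{SPT}}^{\phi})^{*})$ in $\mathrm{Ho}(\mathfrak{L}_{\Delta}(\mathrm{SPT}_{\phi}))$ is the full subcategory spanned by the objects of $\mathbf{LAX}_{\Delta^{*}}(\mathrm{SPT}_{\phi})$. Recall that for any Quillen adjunction $F \dashv G$ the essential image of $\mathbf{L}F$ consists exactly of the objects $Y$ at which the derived counit $\mathbf{L}F\,\mathbf{R}G(Y) \to Y$ is an isomorphism. One inclusion is formal: for cofibrant $A$ the object $(r_{\mathrm{SPT}}^{\phi})^{*}(A)$ is the lax diagram $[\mathbf{n}] \mapsto A|_{\tilde{C}(\phi)_{n}}$ with identity transition maps, hence cartesian, and since weak equivalences in $\mathfrak{L}_{\Delta}(\mathrm{SPT}_{\phi})$ are detected levelwise, $\mathbf{LAX}_{\Delta^{*}}(\mathrm{SPT}_{\phi})$ is closed under weak equivalence; so the essential image of $\mathbf{L}((r_{\mathrm{SPT}}^{\phi})^{*})$ is contained in $\mathbf{LAX}_{\Delta^{*}}(\mathrm{SPT}_{\phi})$. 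The content of the lemma is the reverse inclusion: the derived counit is an isomorphism at every $\sigma \in \mathbf{LAX}_{\Delta^{*}}(\mathrm{SPT}_{\phi})$.

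First I would transport the problem from simplicial $\mathcal{R}$-module spectra to unbounded complexes. The Quillen equivalences $\mathbf{N} \dashv \boldsymbol{\Gamma}$ and $S \dashv T$ of Theorems~\ref{thm6.9} and~\ref{thm6.8} are assembled from the Dold--Kan correspondence together with purely levelwise, algebraic operations (stabilization, truncation, shift), hence commute with restriction of sites; as in the passage from $\mathrm{sSH}_{\phi}$ to $\Omega_{\phi}$ used in Lemma~\ref{lem6.5}, they apply to the $\Delta^{*}$-indexed diagrams and to the restriction adjunctions. Consequently they intertwine $((r_{\mathrm{SPT}}^{\phi})^{*},(r_{\mathrm{SPT}}^{\phi})_{*})$ with the restriction adjunction $r^{*} \dashv r_{*}$ of the $\Delta^{*}$-indexed diagram $[\mathbf{n}] \mapsto \mathrm{Ch}(\mathbf{Sh}_{\mathcal{R}|\tilde{C}(\phi)_{n}})$ (with the model structure of Theorem~\ref{thm6.8}), whose homotopy category is $D(\mathfrak{L}_{\Delta^{*}}(\Omega_{\phi}))$, and they carry $\mathbf{LAX}_{\Delta^{*}}(\mathrm{SPT}_{\phi})$ onto the homotopy-cartesian objects there. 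Since a Quillen equivalence transports essential images of left-derived functors, it suffices to prove the statement for this diagram of unbounded complexes.

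The core is a reduction to the bounded-below case, which is Lemma~\ref{lem6.3} (itself resting on the gluing theorem, Lemma~\ref{lem6.1}). Let $E = (E_{\alpha}, u_{\alpha,\beta}^{\theta})$ be a homotopy-cartesian unbounded complex over the diagram. Its intelligent truncations $\tau_{\ge -n}(E)$ form an increasing exhaustion by subobjects, $E = \mathrm{colim}_{n}\, \tau_{\ge -n}(E)$ --- this is the filtered colimit underlying the isomorphism $S(T(E)) \xrightarrow{\sim} E$. Truncation commutes with the (exact) restriction functors and preserves quasi-isomorphisms, so each $\tau_{\ge -n}(E)$ is again homotopy-cartesian; being concentrated in degrees $\ge -n$, an appropriate shift places it in $\mathbf{LAX}_{\Delta^{*}}(\Gamma_{\phi})$, and Lemma~\ref{lem6.3} (equivalently Lemma~\ref{lem6.5} via Dold--Kan) puts $\tau_{\ge -n}(E)$ in the essential image of $\mathbf{L}r^{*}$. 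Filtered colimits of complexes of sheaves are exact, hence are homotopy colimits; setting $A_{n} = \mathbf{R}r_{*}(\tau_{\ge -n}(E))$, naturality of the derived counit produces an isomorphism in $\mathrm{Ho}$ between the sequences $n \mapsto \mathbf{L}r^{*}(A_{n})$ and $n \mapsto \tau_{\ge -n}(E)$, and since $\mathbf{L}r^{*}$ preserves sequential homotopy colimits we conclude that $E = \mathrm{hocolim}_{n}\, \tau_{\ge -n}(E)$ lies in the essential image of $\mathbf{L}r^{*}$. Transporting back along $S \dashv T$ and $\mathbf{N} \dashv \boldsymbol{\Gamma}$ gives the lemma.

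I expect the main obstacle to be the bookkeeping in the middle step: confirming that Theorems~\ref{thm6.8} and~\ref{thm6.9} and the counit $S(T(E)) \to E$ are natural enough to be applied to the $\Delta^{*}$-indexed diagrams and to the restriction adjunctions, so that "essential image of a left-derived functor'' is transported faithfully --- together with the homotopy-colimit step, namely that each $\tau_{\ge -n}(E)$ stays homotopy-cartesian and lands, after shifting, in the situation of Lemma~\ref{lem6.3}, and that the essential image of $\mathbf{L}r^{*}$ is closed under sequential homotopy colimits. Granting these, the reduction to Lemmas~\ref{lem6.1} and~\ref{lem6.3} is immediate.
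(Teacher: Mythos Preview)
Your argument is essentially correct, but it takes a longer route than the paper's. You transport the question along the Quillen equivalences $\mathbf{N}\dashv\boldsymbol{\Gamma}$ and $S\dashv T$ into unbounded complexes, and then bootstrap from Lemma~\ref{lem6.3} via the exhaustion $E=\mathrm{colim}_{n}\,\tau_{\ge -n}(E)$ and a homotopy-colimit argument. This works, modulo the bookkeeping you yourself flag (extending Theorems~\ref{thm6.8} and~\ref{thm6.9} to the lax limit $\mathfrak{L}_{\Delta}$, checking that the model structure on $\mathfrak{L}_{\Delta}$ of unbounded complexes exists and has the expected homotopy category, and verifying that the derived left adjoint commutes with the sequential homotopy colimit).

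The paper avoids all of this. It stays entirely on the spectrum side and exploits the \emph{strict} model structure of Theorem~\ref{thm6.7}: since every strict local equivalence is a stable one, a cofibrant replacement $Y\to (r_{\mathrm{SPT}}^{\phi})_{*}(X)$ may be taken in the strict structure, i.e.\ levelwise. A short Yoneda computation shows $((r_{\mathrm{SPT}}^{\phi})_{*}X)^{n}\cong (r_{s\mathrm{SH}}^{\phi})_{*}(X^{n})$, so the derived counit $(r_{\mathrm{SPT}}^{\phi})^{*}(Y)\to X$ is, level by level, exactly the derived counit of Lemma~\ref{lem6.5}, which is an equivalence since each $X^{n}$ lies in $\mathbf{LAX}_{\Delta^{*}}(s\mathrm{SH}_{\phi})$ and is fibrant. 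No passage to unbounded complexes, no truncation filtration, no homotopy-colimit step.

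What each buys: the paper's argument is short and sidesteps every compatibility you would otherwise have to verify; your argument is the standard ``filter by truncations'' device for passing from $\mathrm{Ch}_{+}$ to $\mathrm{Ch}$, which is more portable to situations lacking a convenient strict model structure but is overkill here.
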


\begin{proof}

Given an element $X = (x_{n}, \theta_{n, m}^{\alpha})$ of $\mathrm{SPT}_{\phi}$, we will write $X^{k}$ for the element  $((x_{n})^{k}, (\theta_{n, m}^{\alpha})^{k} )$ of $\mathrm{sSH}_{\phi}$.  

It suffices to show that for each fibrant object $X \in \textbf{LAX}_{\Delta^{*}}(\mathrm{SPT}_{\phi})$, and choice of cofibrant replacement $Y \rightarrow (r_{\mathrm{SPT}})_{*}(X)$, the map 
$$
(r_{\mathrm{SPT}}^{\phi})^{*}(Y) \rightarrow X 
$$
is a local stable equivalence.  Every strict local equivalence of simplicial $\mathcal{R}$-module spectra is a local stable equivalence (\cite[Lemma 2.2]{JardineChain}). Thus, we can take cofibrant replacement in the strict model structure of Theorem~\ref{thm6.7}. That is, each $Y^{n} \rightarrow (r_{\mathrm{SPT}}^{\phi})_{*}(X)^{n} $ is a local weak equivalence. 

Indeed, for $A \in s\textbf{Pre}_{\mathcal{R}|_{U}}$, we have  natural isomorphisms 
\begin{align*}
 \mathrm{hom}(A, (r_{s\mathrm{SH}}^{\phi})_{*}(X^{n}) ) \cong
\mathrm{hom}((\Sigma^{\infty}((r_{s\mathrm{SH}}^{\phi})^{*}(A)[-n]), X) \cong \\ \mathrm{hom}((r_{\mathrm{SPT}}^{\phi})^{*}(\Sigma^{\infty}(A[-n])), X)  \cong  \mathrm{hom}(\Sigma^{\infty}(A[-n]), (r_{\mathrm{SPT}}^{\phi})_{*}(X) ) \cong \\ \mathrm{hom}(A, (r_{\mathrm{SPT}}^{\phi})_{*}(X)^{n}).
\end{align*}

Thus, by Yoneda, $((r_{\mathrm{SPT}}^{\phi})_{*}(X))^{n} \cong (r_{s\mathrm{SH}}^{\phi})_{*}(X^{n})$. Note that $X^{n}$ is fibrant for the model structure on $\mathfrak{L}_{\Delta^{*}}(s\mathrm{SH}_{\phi})$, so that we conclude that the map $(r_{\mathrm{SPT}}^{\phi})^{*}(Y) \rightarrow X $ is a weak equivalence from \cite[Lemma 2.2]{JardineChain} and Lemma~\ref{lem6.5}.

\end{proof}

Let $\textbf{SPT}$ be the presheaf of simplicial model categories on $\mathscr{C}$, such that 
$$
U \mapsto  \textbf{Spt}(s\textbf{Sh}_{\mathcal{R}|_{U} }),
$$
equipped with the simplicial structure of Theorem~\ref{thm6.7}. 

\begin{theorem}\label{thm6.11}
$\mathfrak{B}(\mathbf{SPT}^{\circ})$ satisfies quasi-injective descent. 
\end{theorem}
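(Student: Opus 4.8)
The plan is to apply Theorem~\ref{thm5.8} to the presheaf of simplicial model categories $\textbf{M} = \textbf{SPT}$. By Theorem~\ref{thm6.7} this is indeed a presheaf of simplicial model categories, and each restriction functor preserves fibrant objects: restriction of sites preserves local weak equivalences, commutes with $\Omega$ (being exact), and preserves levelwise injective fibrancy (restriction of simplicial presheaves is right Quillen for the Jardine model structure, being a pullback functor between slices whose left adjoint, post-composition, preserves cofibrations and trivial cofibrations), hence carries strictly fibrant $\Omega$-spectra to strictly fibrant $\Omega$-spectra. So it remains to verify the three conditions of Theorem~\ref{thm5.8} for the underlying left Quillen presheaf $M$, with $M(U) = \textbf{Spt}(s\textbf{Sh}_{\mathcal{R}|_{U}})$; the crucial one is condition (2), and the work it requires has essentially been carried out in Lemmas~\ref{lem6.5} and~\ref{lem6.10}.

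\emph{Condition (2).} Fix a morphism $\phi : V \rightarrow U$ generating a cover. The $\Delta^{*}$-indexed diagram $M \circ \tilde{C}(\phi)$ is exactly the left Quillen presheaf $\mathrm{SPT}_{\phi}$, so condition (2) of Theorem~\ref{thm5.8} asks that
$$
\mathrm{Ho}(\textbf{Spt}(s\textbf{Sh}_{\mathcal{R}|_{U}})) \rightarrow \mathrm{Ho}(\textbf{LAX}_{\Delta^{*}}(\mathrm{SPT}_{\phi}))
$$
be essentially surjective. By the discussion leading to Corollaries~\ref{cor5.6} and~\ref{cor5.7} this functor is, up to natural equivalence, the total left derived functor $\textbf{L}(r_{\mathrm{SPT}}^{\phi})^{*}$ of restriction, and by Lemma~\ref{lem6.10} its essential image is precisely $\textbf{LAX}_{\Delta^{*}}(\mathrm{SPT}_{\phi})$; this gives the required essential surjectivity. (This is where the technical heart of the section enters: Lemma~\ref{lem6.5} computes the relevant derived counit, via the cocycle-category description of $\mathrm{Ho}(\mathfrak{L}_{\Delta^{*}}(s\mathrm{SH}_{\phi}))$ and the Dold--Kan comparison of Lemma~\ref{lem6.3}.)

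\emph{Condition (1).} For $x, y \in \mathrm{Ob}(\textbf{SPT}^{\circ})(U)$ the mapping-space presheaf $\textbf{hom}_{\textbf{SPT}^{\circ}|_{U}}(x, y)$ is $V \mapsto \textbf{hom}_{\textbf{Spt}(s\textbf{Sh}_{\mathcal{R}|_{V}})}(x|_{V}, y|_{V})$. The stable model structure on $\textbf{Spt}(s\textbf{Sh}_{\mathcal{R}|_{U}})$ is tensored over $s\textbf{Pre}(\mathscr{C}/U)$ through $K \otimes (-) = \mathcal{R}(K) \otimes (-)$, compatibly with the Jardine model structure, and under this the presheaf above is the internal hom of $M(U)$; since $x$ is cofibrant and $y$ is (locally stably) fibrant, it is therefore injective fibrant as a presheaf of Kan complexes, and hence trivially satisfies injective descent (Definition~\ref{def2.10}). \emph{Condition (3).} For an $\alpha$-bounded coproduct $\coprod_{\alpha} U_{\alpha}$, the hypotheses of Definition~\ref{def4.11} make $\{U_{\alpha} \rightarrow \coprod_{\beta} U_{\beta}\}$ a disjoint cover, so the glueing theorem for sheaves (as in Lemma~\ref{lem6.1}) yields $s\textbf{Sh}_{\mathcal{R}|_{\coprod U_{\alpha}}} \simeq \prod_{\alpha} s\textbf{Sh}_{\mathcal{R}|_{U_{\alpha}}}$, hence an equivalence $M(\coprod_{\alpha} U_{\alpha}) \simeq \prod_{\alpha} M(U_{\alpha})$ realized by the restriction functor itself; in particular the induced functor on homotopy categories is essentially surjective.

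With the three conditions verified, Theorem~\ref{thm5.8} gives the claim. The genuinely hard part is condition (2) --- producing a single object over $U$ from a homotopy-coherent family over the \v{C}ech nerve of $\phi$ --- but this has already been isolated and settled in Lemmas~\ref{lem6.5} and~\ref{lem6.10}; within the present argument the point needing the most care is condition (1), namely recognizing the mapping-space presheaves as injective fibrant (equivalently, as infinite loop spaces of stably fibrant presheaves of mapping spectra), which is precisely where the stability of $\textbf{Spt}(s\textbf{Sh}_{\mathcal{R}})$ and the compatibility of its simplicial enrichment with the Jardine model structure are used.
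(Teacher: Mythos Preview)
Your proposal is correct and follows essentially the same route as the paper: both apply Theorem~\ref{thm5.8}, invoke Lemma~\ref{lem6.10} for condition (2), and argue that the mapping-space presheaves are injective fibrant for condition (1). The only presentational difference is that the paper spells out condition (1) as an explicit lifting argument (transforming the lifting problem by the tensor--hom adjunction and citing \cite[Lemmas 1.1 and 1.4]{JardineChain} for the pushout-product statement), whereas you package the same fact as the assertion that $\mathbf{Spt}(s\mathbf{Sh}_{\mathcal{R}|_{U}})$ is an $s\mathbf{Pre}(\mathscr{C}/U)$-model category; the paper also dismisses condition (3) in one word while you sketch the sheaf-glueing argument. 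One small caution: your preamble justification that restriction preserves fibrant objects (``right Quillen for the Jardine model structure, being a pullback functor \ldots whose left adjoint, post-composition, preserves cofibrations and trivial cofibrations'') is a bit garbled---post-composition is the functor on slice categories, not the left adjoint on presheaves, and the claim that the actual left adjoint (left Kan extension) preserves local trivial cofibrations needs a separate argument---but the paper itself leaves this point implicit.
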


\begin{proof}
We apply Theorem~\ref{thm5.8}. Condition (3) is trivial, and Condition (2) is Lemma~\ref{lem6.10}. We verify condition (1). It suffices to show that given $U \in \mathrm{Ob}(\mathscr{C}), x, y \in \textbf{SPT}^{\circ}(U)$, $\textbf{hom}_{\textbf{SPT}^{\circ}|_{U}}(x|_{U}, y|_{U})$ is injective fibrant. We want to solve lifting problems of the form 
$$
\xymatrix
{
A \ar[r] \ar[d] & \textbf{hom}_{\textbf{SPT}^{\circ}|_{U}}(x|_{U}, y|_{U}) \ar[d] \\
B \ar@{.>}[ur] \ar[r] & \, *
}
$$ 
where $A \rightarrow B$ is a local trivial cofibration. But this lifting problem is equivalent to a lifting problem of the form 
$$
\xymatrix
{
 A \otimes x|_{U} \ar[r] \ar[d] & y|_{U} \ar[d] \\
 B \otimes x|_{U}  \ar@{.>}[ur] \ar[r] & \,  *
}
$$ 
We can find  such a lift; the left and right vertical maps are, respectively, a trivial cofibration (by \cite[Lemmas 1.1 and 1.4]{JardineChain}) and a fibration (by definition) for the model structure of Theorem~\ref{thm6.7}. 
\end{proof}

Let $\mathrm{CH}$ be the left Quillen presheaf given by 
$$
U \mapsto \mathrm{Ch}(\textbf{Sh}_{\mathcal{R}|_{U}}),
$$
where $ \mathrm{Ch}(\textbf{Sh}_{\mathcal{R}|_{U}})$ is equipped with the model structure of Theorem~\ref{thm6.8}. 

\begin{corollary}\label{cor6.12}
$\mathfrak{B}(L_{H}(\mathrm{CH}^{c}))$ satisfies quasi-injective descent. 
\end{corollary}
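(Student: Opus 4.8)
The plan is to deduce this from Theorem~\ref{thm6.11} by transporting quasi-injective descent along a chain of Quillen equivalences of left Quillen presheaves linking the presheaf of simplicial $\mathcal{R}$-module spectra $\mathbf{SPT}$ to the presheaf of unbounded complexes $\mathrm{CH}$. Two observations drive this. First, $\mathfrak{B}L_{H}((-)^{c})$, applied in sections, sends a sectionwise left Quillen equivalence to a sectionwise Joyal equivalence: a left Quillen functor that is part of a Quillen equivalence restricts to an exact functor between the subcategories of cofibrant objects and induces a DK-equivalence on hammock localizations (Dwyer--Kan \cite{DK-Function}), and $\mathfrak{B}$, being the right adjoint of the Quillen equivalence of Theorem~\ref{thm1.8}, carries DK-equivalences of fibrant simplicial categories to Joyal equivalences. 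Second, quasi-injective descent (Definition~\ref{def2.10}) is invariant under sectionwise Joyal equivalence, since such a map is in particular a local Joyal equivalence (Lemma~\ref{lem2.9}) and $\mathcal{L}_{\mathrm{Joyal}}$ is a fibrant replacement.

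Concretely, for each $U \in \mathrm{Ob}(\mathscr{C})$, Theorems~\ref{thm6.9} and~\ref{thm6.8}, applied over the site $\mathscr{C}/U$ with the sheaf of rings $\mathcal{R}|_{U}$, provide Quillen equivalences
$$
\mathbf{Spt}(s\mathbf{Sh}_{\mathcal{R}|_{U}}) \xrightarrow{\ \mathbf{N}\ } \mathbf{Spt}(\mathrm{Ch}_{+}(\mathbf{Sh}_{\mathcal{R}|_{U}})) \xrightarrow{\ S\ } \mathrm{Ch}(\mathbf{Sh}_{\mathcal{R}|_{U}}),
$$
with both displayed functors left Quillen. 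I would then check that $\mathbf{N}$ and $S$ commute with restriction of sites, and that restriction of sites is left Quillen for the intermediate model structure of Theorem~\ref{thm6.8}; both are routine, since restriction of sites is a left adjoint that preserves monomorphisms and quasi-isomorphisms. This upgrades the above to a diagram of left Quillen presheaves $\mathbf{SPT}_{0} \to \mathbf{Spt}(\mathrm{Ch}_{+}(\mathbf{Sh}_{\mathcal{R}|_{-}})) \to \mathrm{CH}$ on $\mathscr{C}$ in which each arrow is a sectionwise left Quillen equivalence; here $\mathbf{SPT}_{0}$ denotes the underlying left Quillen presheaf of $\mathbf{SPT}$.

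Applying $L_{H}((-)^{c})$ and then $\mathfrak{B}$ to this diagram yields, by the two observations above, sectionwise Joyal equivalences $\mathfrak{B}(L_{H}(\mathbf{SPT}_{0}^{c})) \to \mathfrak{B}(L_{H}(\mathbf{Spt}(\mathrm{Ch}_{+}(\mathbf{Sh}_{\mathcal{R}|_{-}}))^{c})) \to \mathfrak{B}(L_{H}(\mathrm{CH}^{c}))$. On the other hand, exactly as in the proof of Corollary~\ref{cor5.7} (using \cite[4.8, 5.2]{DK-Function} and Theorem~\ref{thm1.8}), $\mathfrak{B}(L_{H}(\mathbf{SPT}_{0}^{c}))$ is connected to $\mathfrak{B}(\mathbf{SPT}^{\circ})$ by a zig-zag of sectionwise Joyal equivalences. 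Combining these, $\mathfrak{B}(L_{H}(\mathrm{CH}^{c}))$ and $\mathfrak{B}(\mathbf{SPT}^{\circ})$ are connected by a zig-zag of sectionwise Joyal equivalences; the latter satisfies quasi-injective descent by Theorem~\ref{thm6.11}, hence so does the former, which is the claim.

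The main work is the bookkeeping in the second step: promoting the Quillen equivalences of Theorems~\ref{thm6.8} and~\ref{thm6.9} to genuine equivalences of \emph{left Quillen presheaves} — i.e. verifying that $\mathbf{N}$ and $S$ are natural with respect to restriction of sites and that restriction is left Quillen for the intermediate spectrum-of-complexes model structure — together with invoking the Dwyer--Kan invariance of hammock localization under Quillen equivalence in the precise form needed. Everything else is a direct assembly of results already established.
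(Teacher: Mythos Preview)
Your proposal is correct and follows essentially the same route as the paper's proof: connect $\mathfrak{B}(\mathbf{SPT}^{\circ})$ to $\mathfrak{B}L_{H}(\mathbf{SPT}_{0}^{c})$ via \cite[4.8, 5.2]{DK-Function} and Theorem~\ref{thm1.8}, then use the sectionwise Quillen equivalences of Theorems~\ref{thm6.8} and~\ref{thm6.9} (which the paper packages as a single natural transformation $\mathbf{SPT}_{0}\to\mathrm{CH}$, citing \cite[3.6, 8.4]{DK-Calculating} for the induced DK-equivalence on hammock localizations) to pass to $\mathfrak{B}L_{H}(\mathrm{CH}^{c})$, and conclude from Theorem~\ref{thm6.11}. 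The only differences are presentational: you go through the intermediate spectrum-of-complexes presheaf explicitly and spell out the naturality-with-restriction bookkeeping, whereas the paper absorbs this into a single composite.
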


\begin{proof}
There is a zig-zag of sectionwise Joyal equivalences connecting $\mathfrak{B}L_{H}(\textbf{SPT}_{0}^{c})$ and $\mathfrak{B}(\textbf{SPT}^{\circ})$ byTheorem~\ref{thm1.8} and \cite[Propositions 4.8 and 5.2]{DK-Function}. Furthermore, there is a natural transformation $\textbf{SPT}_{0} \rightarrow \mathrm{CH}$ which is a Quillen equivalence in each section by Theorem~\ref{thm6.8} and Theorem~\ref{thm6.9}. This induces a map $L_{H}(\textbf{SPT}_{0}^{c}) \rightarrow L_{H}(\textbf{CH}^{c})$ which is a sectionwise DK-equivalence (\cite[3.6, 8.4]{DK-Calculating}).

Recall that $L_{H}$ is obtained by applying the usual hammock localization, followed by fibrant replacement; all diagrams involved are presheaves of fibrant simplicial categories. Thus, applying the homotopy coherent nerve we get a collection of sectionwise Joyal equivalences connecting $\mathfrak{B}L_{H}(\mathrm{CH}^{c})$ and $\mathfrak{B}(\textbf{SPT}^{\circ})$. Thus, the result follows from Theorem~\ref{thm6.11}.

\end{proof}

The object $\mathfrak{B}(\textbf{SPT}^{\circ})$ is not a presheaf of small categories, and thus is problematic set-theoretically. In the next few paragraphs, we will demonstrate how to replace $\textbf{SPT}^{\circ}$ with a presheaf of small simplicial categories (without recourse to Grothendieck universes). The same arguments can be applied to other presheaves of model categories of interest. 

For a simplicial set $X$, the cardinality of $X$ is defined to be $$|X| = \underset{n \in \mathbb{N}}{\mathrm{sup}}(|X_{n}|).$$ For each simplicial presheaf $X$ and infinite cardinal $\alpha$, say that $X$ is $\alpha$-bounded if $$\underset{U \in \mathrm{Ob}(\mathscr{E})}{\mathrm{sup}}(|X(U)|) < \alpha.$$ Given a presheaf of $\mathcal{R}$-module spectra $Y$, we say $Y$ is $\alpha$-bounded iff each $Y^{n}$ is $\alpha$-bounded. We say that a cofibration $A \rightarrow B$ of simplicial sets is $\alpha$-bounded iff $B$ is. 

Given an uncountable, regular cardinal $\beta$, we write $\textbf{SPT}_{\beta}$ for the presheaf of simplicial categories such that $\textbf{SPT}_{\beta}(U)$ is the subsimplicial category, full in each simplicial degree, of $\textbf{SPT}(U)$ consisting of $\beta$-bounded objects. 

\begin{lemma}\label{lem6.13}
For sufficiently large regular cardinals $\beta$, the following hold:
\begin{enumerate}
 \item{ $(\mathbf{SPT}_{\beta})_{0}$  is an exact presheaf of cofibration categories.}
 \item{For each $U \in \mathrm{Ob}(\mathscr{C})$ $(\mathbf{SPT}_{\beta})_{0}(U)$ admits a two-sided calculus of fractions. }
   \end{enumerate}
\end{lemma}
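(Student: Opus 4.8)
The plan is to reduce both statements to standard closure and smallness facts about the model category $\mathbf{Spt}(s\mathbf{Sh}_{\mathcal{R}|_U})$ of Theorem~\ref{thm6.7}, combined with the fact that the stable model structure is cofibrantly generated and (left) proper. First I would fix, for each $U$, a set $I_U$ of generating cofibrations and $J_U$ of generating trivial cofibrations for the stable model structure on $\mathbf{Spt}(s\mathbf{Sh}_{\mathcal{R}|_U})$; since the site $\mathscr{C}$ is small and all the constructions (sheafification, the free $\mathcal{R}$-module functor, $\Sigma^\infty$, the shift functors) are accessible, there is a single uncountable regular cardinal $\beta_0$ so that for all $U$ every object appearing as a (co)domain in $I_U \cup J_U$ is $\beta_0$-bounded, and so that $\beta_0$-boundedness is preserved by the restriction functors $\mathbf{SPT}_\phi(\theta)$ (these are built from restriction of sites, which does not increase cardinality after sheafification, up to a fixed bound). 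Any regular $\beta \geq \beta_0$ will then work; this is where one invokes the standard ``fat small object'' bookkeeping rather than doing it by hand.

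For part (1), I would equip $(\mathbf{SPT}_\beta)_0(U)$ with the cofibrations and weak equivalences inherited from $\mathbf{Spt}(s\mathbf{Sh}_{\mathcal{R}|_U})$ and verify the axioms of a cofibration category (\cite[Definition 2.1]{Kapulkin-Szumilo}): the initial object $\mathbb{S}$ is $\beta$-bounded; pushouts of $\beta$-bounded cofibrations along maps of $\beta$-bounded objects stay $\beta$-bounded because colimits of simplicial $\mathcal{R}$-module spectra are computed levelwise and sheafification is controlled by $\beta$; and every map in $(\mathbf{SPT}_\beta)_0(U)$ factors as a $\beta$-bounded cofibration followed by a trivial fibration, which one gets by running the small object argument on $I_U$ restricted to $\beta$-bounded objects — here is the one place the choice of $\beta$ relative to the presentability rank of the generating cofibrations really matters, since one needs the factorization to land back inside the subcategory. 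Exactness of the presheaf structure is then immediate: each restriction functor $\mathbf{SPT}_\phi(\theta)$ is a left Quillen functor (noted in the text, just before Lemma~\ref{lem6.10}), hence preserves cofibrations and trivial cofibrations, and it preserves $\beta$-boundedness by the choice of $\beta_0$.

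For part (2), I would verify that $(\mathbf{SPT}_\beta)_0(U)$ admits a two-sided calculus of fractions in the sense needed for Theorem~\ref{thm5.4} / Remark~\ref{rmk5.9} (\cite[DK-Calculating]). The point is that on a proper model category the full subcategory of all objects, with weak equivalences and (trivial) cofibrations/fibrations, admits such a calculus — one uses functorial factorizations to produce the ``left fraction'' (cofibrant-replace, then a trivial fibration) and ``right fraction'' (fibrant-replace, then a trivial cofibration) diagrams, and properness gives the Ore-type composability conditions. Restricting to $\beta$-bounded objects is harmless provided the factorizations can be chosen $\beta$-bounded, which is exactly what the small object argument on $I_U, J_U$ delivers once $\beta \geq \beta_0$; right properness was recorded in Theorem~\ref{thm6.7}, and left properness follows since every object is cofibrant, so the calculus-of-fractions axioms go through.

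The main obstacle I anticipate is not any single axiom but the uniformity of the cardinal bound: one must check that a \emph{single} $\beta$ simultaneously (a) makes the generating (trivial) cofibrations $\beta$-small in every section, (b) is preserved by all the restriction functors $\mathbf{SPT}_\phi(\theta)$ after sheafification, and (c) is large enough that the small-object-argument factorizations stay within the $\beta$-bounded subcategory. Packaging this is routine accessible-category bookkeeping — the site is small and the free/forgetful and spectrum constructions are all accessible — but it is the step that requires care, and it is precisely the reason the lemma is only asserted ``for sufficiently large regular cardinals $\beta$''.
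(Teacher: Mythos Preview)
Your approach is essentially the paper's: choose $\beta$ large relative to the generating (trivial) cofibrations and $|\mathrm{Mor}(\mathscr{C})|$, check closure under pushouts and that the small-object-argument factorizations remain $\beta$-bounded for part~(1), and for part~(2) use the factorization structure (the paper packages this by citing \cite[Proposition~8.2]{DK-Calculating} with $W_{1}$ the trivial cofibrations and $W_{2}$ the trivial fibrations, which is the same content you unpack via properness). One slip: not every object of $\mathbf{Spt}(s\mathbf{Sh}_{\mathcal{R}|_{U}})$ is cofibrant --- the bonding maps $S^{1}\otimes A^{n}\to A^{n+1}$ need not be monomorphisms --- so your stated reason for left properness is wrong, though this is harmless since Theorem~\ref{thm6.7} already records that the stable model structure is proper.
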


\begin{proof}
For the first statement, it suffices to show that  $\textbf{SPT}_{\beta}(U)_{0}$ is a cofibration category for sufficiently large $\beta$. 
Let $\mathcal{K}$ be a family of generating cofibrations for the model structure on $\textbf{Spt}(s\textbf{Sh}_{\mathcal{R}}|_{U})$. Choose a cardinal $\lambda$, such that $\lambda > |Mor(\mathscr{C})|$ and each element of $\mathcal{K}$ is $\lambda$-bounded. Let $\beta = 2^{\lambda} +1$. It suffices to show 

\begin{enumerate}
\item{$\textbf{SPT}_{\beta}(U)_{0}$ is closed under pushout.}
\item{Functorial factorizations in $\textbf{Spt}(s\textbf{Sh}_{\mathcal{R}}|_{U})$, as a cofibration followed by a trivial fibration, preserve $\beta$-bounded objects.}
\end{enumerate}
The pushout of a diagram in $\textbf{Spt}(s\textbf{Sh}_{\mathcal{R}}|_{U})$ is obtained by applying sheafification to a presheaf theoretic pushout. Presheaf theoretic pushout clearly preserves $\beta$-bounded objects. Since $\beta > |Mor(\mathscr{C})|$ sheafification also preserves $\beta$-bounded objects. Thus the first statement holds. 

The second statement follows from arguments similar to those of \cite[Lemma 4.8 d.]{J1}. 

To show that $(\textbf{SPT}_{\beta})_{0}(U)$ admits a calculus of fractions, we apply \cite[Proposition 8.2]{DK-Calculating}. The classes of trivial cofibrations and trivial fibrations, respectively, provide the classes $W_{1}, W_{2}$ required by the proposition. The final condition is given by the existence of functorial factorizations in $(\textbf{SPT}_{\beta})_{0}(U)$.

\end{proof}

\begin{corollary}\label{cor6.14}
 For sufficiently large regular cardinals $\beta$, $\mathfrak{B}(\mathbf{SPT}_{\beta}^{\circ})$ satisfies quasi-injective descent.
\end{corollary}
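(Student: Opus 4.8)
The plan is to reduce $\mathfrak{B}(\mathbf{SPT}_\beta^\circ)$ to $\mathfrak{B}(\mathbf{SPT}^\circ)$ up to sectionwise Joyal equivalence, so that Theorem~\ref{thm6.11} applies directly, OR — if a sectionwise comparison is not available because $\mathbf{SPT}_\beta(U)$ is only a cofibration category rather than a model category — to run the argument of Theorem~\ref{thm6.11} again verbatim, replacing the appeals to Corollary~\ref{cor5.7}/Theorem~\ref{thm5.8} by the cofibration-category versions flagged in Remark~\ref{rmk5.9}. By Lemma~\ref{lem6.13}, for $\beta$ sufficiently large $(\mathbf{SPT}_\beta)_0(U)$ is an exact presheaf of cofibration categories with each section admitting a two-sided calculus of fractions, which is exactly the hypothesis of Remark~\ref{rmk5.9}. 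Hence Theorem~\ref{thm5.3} and Corollary~\ref{cor5.6} hold for $M = (\mathbf{SPT}_\beta)_0$, and consequently the effective-descent part of Theorem~\ref{thm5.8} (conditions (2) and (3)) is available for $\mathfrak{B}L_H((\mathbf{SPT}_\beta)_0^c)$.

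First I would fix $\beta$ large enough that the conclusions of Lemma~\ref{lem6.13} hold and, in addition, that $\mathbf{SPT}_\beta(U)$ is closed under the bifibrant-replacement functors and under the finite tensorings $\Delta^n\otimes(-)$ and simplicial homs used in Theorem~\ref{thm6.11} (this is the same kind of cardinality bookkeeping as in the proof of Lemma~\ref{lem6.13}: sheafification and presheaf-level constructions preserve $\beta$-boundedness once $\beta > |\mathrm{Mor}(\mathscr{C})|$ and $\beta$ dominates the generating cofibrations). Next I would verify the three conditions of the cofibration-category analogue of Theorem~\ref{thm5.8} for $\mathbf{SPT}_\beta^\circ$. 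Condition (3) is trivial exactly as before. For condition (2), the key point is that Lemma~\ref{lem6.10} computes the essential image of $(r_{\mathrm{SPT}}^\phi)^*$ as $\mathbf{LAX}_{\Delta^*}(\mathrm{SPT}_\phi)$; restricting to $\beta$-bounded objects on both sides gives that $\mathrm{Ho}(M_\beta(U)) \to \mathrm{Ho}(\mathbf{LAX}_\Delta(M_\beta\circ\tilde C(\phi)))$ is essentially surjective, using that cofibrant and fibrant replacements in the relevant model structures can be chosen $\beta$-bounded. For condition (1), I would rerun the lifting argument of Theorem~\ref{thm6.11} — a lift against $A\to B$ in $\mathbf{hom}_{\mathbf{SPT}_\beta^\circ|_U}(x,y)$ transposes to a lift of $A\otimes x|_U \to B\otimes x|_U$ against $y|_U\to *$ — observing that all objects appearing ($A\otimes x|_U$, $B\otimes x|_U$, $y|_U$) remain $\beta$-bounded, and the trivial-cofibration/fibration structure is inherited from Theorem~\ref{thm6.7}.

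Then I would invoke the cofibration-category form of Theorem~\ref{thm5.8}: combining Theorem~\ref{thm4.12}, the comparison $\pi_0\mathfrak{B}L_H(Q)\cong\mathrm{Ho}(Q)$ (Theorem~\ref{thm1.9}(1) and \cite[Proposition 3.1]{DK-Calculating}), the mapping-space comparison $\mathbf{hom}_{\mathbf{M}^\circ}(x,y)\simeq \mathrm{Map}_{\mathfrak{B}(\mathbf{M}^\circ)}(x,y)$ (Theorem~\ref{thm1.9}(2)), and Corollary~\ref{cor5.6}/Remark~\ref{rmk5.9}, one concludes that the three verified conditions imply $\mathfrak{B}(\mathbf{SPT}_\beta^\circ)$ satisfies quasi-injective descent.

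The main obstacle I anticipate is purely the set-theoretic bookkeeping in the previous paragraph: one must choose a single $\beta$ that simultaneously makes $(\mathbf{SPT}_\beta)_0$ a presheaf of cofibration categories (Lemma~\ref{lem6.13}), makes it closed under all the auxiliary constructions entering Theorems~\ref{thm6.11} and~\ref{thm5.8} (bifibrant replacement, $\Delta^n\otimes(-)$, the lax-limit construction $\mathbf{LAX}_{\Delta^*}(\mathrm{SPT}_\phi)$ and its cofibrant/fibrant replacements), and is uncountable and regular. Because each of these demands only that $\beta$ dominate some fixed set of cardinal invariants of $\mathscr{C}$ and of the generating (trivial) cofibrations, taking $\beta$ larger than the supremum of finitely many such cardinals suffices; I would state this once at the outset and then treat $\beta$-boundedness as a routine closure property throughout, exactly as the paper does in Lemma~\ref{lem6.13} and its proof. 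Everything else is a faithful replay of the unbounded-cardinality arguments already given, so no genuinely new homotopy-theoretic input is needed beyond Remark~\ref{rmk5.9}.
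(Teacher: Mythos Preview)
Your proposal is correct and follows essentially the same route as the paper: invoke Remark~\ref{rmk5.9} so that the cofibration-category versions of Theorem~\ref{thm5.3} and Corollary~\ref{cor5.6} apply to $(\mathbf{SPT}_\beta)_0$, then verify the three conditions of Theorem~\ref{thm5.8} by replaying the arguments from Theorem~\ref{thm6.11} while tracking $\beta$-boundedness. The one place where the paper is more concrete than your sketch is condition~(2): rather than treating it as generic ``replacements preserve $\beta$-boundedness'' bookkeeping, the paper observes that the crucial object is $(r_{\mathrm{SPT}}^\phi)_*(X)^n \cong (r_{s\mathrm{SH}}^\phi)_*(X^n)$, identifies this right adjoint explicitly as a limit $\lim_{m} s_m(x_{[\mathbf m]})$ with $s_m$ the pushforward along $\tilde C(\phi)_m \to U$, and cites the explicit formula for $s_m$ from \cite[Proposition 11.3.1]{SGA4} to see that it preserves $\beta$-bounded objects---this is not a fibrant or cofibrant replacement, so your phrasing slightly obscures where the actual work lies.
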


\begin{proof}
Choose $\beta$ as in Lemma~\ref{lem6.13}. We will use Remark~\ref{rmk5.9} and Theorem~\ref{thm5.8}.  
For each $U \in \mathrm{Ob}(\mathscr{C}),$ $ x, y \in \textbf{SPT}_{\beta}^{\circ}(U)$, $\textbf{hom}_{\textbf{SPT}_{\beta}|_{U}}(x, y)$ satisfies injective descent by the proof of Theorem~\ref{thm6.11}. Moreover, for each  $\mathcal{U} \subseteq \mathrm{Ob}(\mathscr{C})$, we have an equivalence of categories 
$$
\textbf{SPT}_{\beta}(\coprod_{U \in \mathcal{U}} U) \simeq \prod_{U \in \mathcal{U}} \textbf{SPT}_{\beta}(U)
$$
provided that $\beta$ is larger than the cardinal $\alpha$ in Definition~\ref{def4.11}. 

Suppose that $\phi : V \rightarrow U$ generates a cover choose $X \in \textbf{LAX}_{\Delta^{*}}(\mathrm{SPT}_{\phi})$ such that it is $\beta$-bounded. 
  By the proof of Lemma~\ref{lem6.10}, we have a local stable equivalence
  $$
(r_{\mathrm{SPT}})^{*}Y \rightarrow X,
$$
where $Y \rightarrow (r_{\mathrm{SPT}})_{*}(X)$ is a cofibrant replacement. 
  To complete the proof, it suffices to show that each $(r_{\mathrm{SPT}})_{*}(X)^{n} \cong (r_{s\mathrm{SH}})_{*}(X^{n})$ is $\beta$-bounded (cofibrant replacement preserves $\beta$-bounded objects). Let $X^{n} =(x_{[\textbf{m}]}, \gamma_{[\textbf{k}], [\textbf{m}]}^{\theta})$. Then $r_{s\mathrm{SH}}(X^{n})$  is a limit
  $$
\mathrm{lim}_{m \in \mathbb{N}} s_{m}(x_{[\textbf{m}]}),
$$
 where $s_{m} :  s\textbf{Sh}_{\mathcal{R}|_{U \times_{V} U \cdots \times_{V} U}} \rightarrow s\textbf{Sh}_{\mathcal{R}|_{U}}  $   is the right adjoint of restriction of sites ($V$ appears $m$ times in the product). This functor is just the functor described in \cite[Proposition 11.3.1]{SGA4} (c.f. also \cite[Corollary 5.26]{local}), which preserves $\beta$-bounded objects. 

\end{proof} 

\section*{Acknowledgement}

The author is deeply indebted to the anonymous referee, whose comments led to a significant improvement in the paper.

%
%

\bibliographystyle{spmpsci}      
\bibliography{DESCENTBIB2.bib}   

\end{document}